\documentclass[10pt]{amsart}

\usepackage{enumerate}
\usepackage{graphicx}
\usepackage{float}
\usepackage{placeins}
\usepackage{mdframed}
\usepackage{amssymb}
\usepackage{esint}
\usepackage{cool}
\usepackage[all,cmtip]{xy}
\usepackage{mathtools}
\usepackage{amstext} 
\usepackage{array}   
\usepackage[shortlabels]{enumitem}

\newcolumntype{L}{>{$}l<{$}} 
\newtheorem{theorem}{Theorem}[section]
\newtheorem{lemma}[theorem]{Lemma}
\newtheorem{cor}[theorem]{Corollary}
\newtheorem{prop}[theorem]{Proposition}
\newtheorem{setup}[theorem]{Setup}
\theoremstyle{definition}
\newtheorem{definition}[theorem]{Definition}
\newtheorem{example}[theorem]{Example}

\newtheorem{observation}[theorem]{Observation}

\newtheorem{chunk}[theorem]{}
\theoremstyle{remark}
\newtheorem{remark}[theorem]{Remark}

\newtheorem{the context}[theorem]{The Context}
\newtheorem{question}[theorem]{Question}
\numberwithin{equation}{theorem}
\numberwithin{equation}{section}




\newcommand{\cat}[1]{\mathcal{#1}}


\newcommand{\pd}{\operatorname{pd}}


\newcommand{\rank}{\operatorname{rank}}

\newcommand{\soc}{\operatorname{Soc}}

\newcommand{\type}{\operatorname{type}}

\newcommand{\Span}{\operatorname{Span}}


\newcommand{\tor}{\operatorname{Tor}}
\newcommand{\im}{\operatorname{Im}}

\newcommand{\proj}{\operatorname{Proj}}

\newcommand{\Ker}{\operatorname{Ker}}



\newcommand{\ideal}[1]{\mathfrak{#1}}
\newcommand{\m}{\ideal{m}}






\newcommand{\bbp}{\mathbb{P}}






\renewcommand{\geq}{\geqslant}
\renewcommand{\leq}{\leqslant}
\renewcommand{\ker}{\Ker}
\renewcommand{\hom}{\Hom}


\newcommand{\Hom}{\operatorname{Hom}}

\newcommand{\socle}{\operatorname{Soc}}

\newcommand{\maps}[5]{\xymatrix{#1 \ar[r]^-{#3} & #2 \\
#4 \ar@{|->}[r] & #5 \\}}

\def\mcI{\mathcal I}

\def\w{\wedge}
\def\proj{\operatorname{proj}}
\def\ts{\textstyle}
\def\im{\operatorname{im}}

\setcounter{MaxMatrixCols}{20}

\newcommand{\p}{\oplus}

\begin{document}
\title[Structure Theory for a Class of Grade $3$ Ideals]{Structure Theory for a Class of Grade 3 Homogeneous Ideals Defining Type 2 Compressed Rings}

\author{Keller VandeBogert }

\date{\today}

\maketitle

\begin{abstract}
    Let $R=k[x,y,z]$ be a standard graded $3$-variable polynomial ring, where $k$ denotes any field. We study grade $3$ homogeneous ideals $I \subseteq R$ defining compressed rings with socle $k(-s) \oplus k(-2s+1)$, where $s \geq3$ is some integer. We prove that all such ideals are obtained by a trimming process introduced by Christensen, Veliche, and Weyman in \cite{christ}. We also construct a general resolution for all such ideals which is minimal in sufficiently generic cases. Using this resolution, we give bounds on the minimal number of generators $\mu(I)$ of $I$ depending only on $s$; moreover, we show these bounds are sharp by constructing ideals attaining the upper and lower bounds for all $s\geq 3$. Finally, we study the Tor-algebra structure of $R/I$. It is shown that these rings have Tor algebra class $G(r)$ for $s \leq r \leq 2s-1$. Furthermore, we produce ideals $I$ for all $s \geq 3$ and all $r$ with $s \leq r \leq 2s-1$ such that $\soc (R/I ) = k(-s) \oplus k(-2s+1)$ and $R/I$ has Tor-algebra class $G(r)$, partially answering a question of realizability posed by Avramov in \cite{av2012}. 
\end{abstract}

\section{Introduction}

Let $(R,\m,k)$ be a regular local ring with maximal ideal $\m$. A result of Buchsbaum and Eisenbud (see \cite{BE}) established that any quotient $R/I$ of $R$ with projective dimension $3$ admits the structure of an associative commutative differental graded (DG) algebra. Later, a complete classification of the multiplicative structure of the Tor algebra $\tor_\bullet^R (R/I , k)$ for such quotients was established by Weyman in \cite{wey89} and Avramov, Kustin, and Miller in \cite{torclass}.

One parametrized family arising from the aforementioned classification of Tor algebras is the class $G(r)$, where $r$ is a parameter arising from the rank of the induced map
$$\delta : \tor_2^R (R/I , k) \to \hom_k ( \tor_1^R(R/I , k) , \tor_3^R (R/I ,k)).$$
If $I \subset R$ is such that $R/I$ is Gorenstein, then it is shown by Avramov and Golod in \cite{av71} that the Koszul homology algebra of $R/I$ is a Poincar\'e duality algebra. Indeed, an equivalent characterization of the Tor algebra class $G(r)$ is that that there exists a subalgebra of the Tor algebra minimally exhibiting Poincar\'e duality, in the sense that there does not exist any nontrivial multiplication outside of this subalgebra (see Definition \ref{classg} for a precise statement). It can be shown that if $R/I$ is Gorenstein (and not a complete intersection) of codimension $3$, then $R/I$ has Tor algebra class $G(\mu(I))$, where $\mu(I)$ denotes the minimal number of generators of $I$. Avramov conjectured in \cite{av2012} that quotients of Tor algebra class $G$ are necessarily Gorenstein rings.

The technique of ``trimming'' a Gorenstein ideal is used by Christensen, Veliche, and Weyman (see \cite{christ}) to produce codimension $3$ non-Gorenstein rings with Tor algebra class $G$. If $(R,\m)$ is a regular local ring and $I = (\phi_1 , \dots , \phi_n) \subseteq R$ is an $\m$-primary ideal with $R/I$ of codimension $3$, then an example of this trimming process is the formation of the ideal $(\phi_1 , \dots , \phi_{n-1} ) + \m \phi_n$. 

The classification of perfect codimension $3$ ideals has seen significant progress recently, starting with the paper \cite{wey2018} (extending the work started in \cite{wey89}), which links this structure theory to the representation theory of Kac-Moody Lie algebras. Resolutions of a given format (sequence of Betti numbers) have an associated graph, and it is conjectured in \cite{christ2} that an ideal is in the linkage class of a complete intersection if and only if this associated graph is a Dynkin diagram. 

In \cite[Question 3.8]{av2012}, Avramov poses a question of realizability; that is, which Tor algebra classes of codimension $3$ local rings can actually occur? Using techniques of linkage, this question is explored in \cite{christ3}, refining the classification provided in \cite{torclass} and showing that every grade $3$ perfect ideal in a regular local ring is in the linkage class of either a complete intersection or an ideal defining a Golod ring.

In this paper, we examine grade $3$ homogeneous ideals $I \subset R:= k[x,y,z]$ (with all variables having degree $1$, and $k$ being a field of arbitrary characteristic) defining an Artinian compressed ring with socle $\soc (R/I) = k(-s) \oplus k(-2s+1)$. The values $s$ and $2s-1$ are interesting because they provide a boundary case for socle degrees; more precisely, it is not possible to have a type $2$ ring with socle $k(-s_1) \oplus k(-s_2)$, where $s_2 \geq 2s_1$. In particular, we prove that all such ideals arise as trimmings of Gorenstein ideals. In Theorem \ref{12.6}, we produce a general resolution for trimmed Gorenstein ideals that is minimal in some generic cases (see Proposition \ref{btab3} for the relevant parameter space and the corresponding open subset). Even in the cases where this resolution is not minimal, there is valuable information to be gained from the relatively simple differentials involved.

We give sharp bounds for the graded Betti numbers for ideals resolved by Theorem \ref{12.6}. Furthermore, we produce a family of ideals attaining all possible intermediate Betti numbers. This family is also used to show that for any integers $r$ and $s$ with $s\geq 3$ and $s \leq r \leq 2s-1$, there exists a grade $3$ ideal $I$ defining an Artinian compressed ring with $\soc (R/I) = k(-s) \oplus k(-2s+1)$ of Tor algebra class $G(r)$ (see Corollary \ref{torach2}), which partially answers the question of realizability mentioned above. More generally, any such $I$ with $s \geq 3$ must have Tor algebra class $G( \mu (I) - 3)$.

The paper is organized as follows: Sections \ref{sec2} and \ref{genbetti} consist of preliminary material and notation. Section \ref{gr3} proves the previously mentioned fact that any grade $3$ ideal $I \subset k[x,y,z]$ defining an Artinian compressed ring with $\soc (R/I ) = k(s) \oplus k(-2s+1)$ is obtained as the trimming of some grade $3$ Gorenstein ideal. Section \ref{theres} builds a resolution of all such ideals, deducing some consequences of the structure of the differentials along the way.

In Section \ref{appex} we explore the initial consequences of the resolution built in Section \ref{theres}. In the standard graded case, we find a remarkably simple criterion to deduce whether the trimmed generating set of a Gorenstein ideal is a minimal generating set (see Proposition \ref{isminl}). In particular, questions about minimal generators are translated into counting degrees of the entries of the presenting matrix of a Gorenstein ideal. 

Section \ref{extbetti} deduces the maximal number of minimal generators of a grade $3$ ideal $I$ defining an Artinian compressed ring with $\soc (R/I ) = k(s) \oplus k(-2s+1)$. Moreover, we produce an ideal achieving this upper bound for every $s \geq 2$, showing that the bound is sharp.

Section \ref{toralgstr} delves into some more nontrivial consequences of the tools developed beforehand. In \cite{christ}, all possible Tor algebra structures of trimmed Gorenstein ideals are enumerated. As a consequence, all possible Tor algebra structures for the ideals of interest may be deduced. Combining this with the bounds on the minimal number of generators, we show that all such ideals are class $G(r)$ for some $s \leq r \leq 2s-1$. Furthermore, using information from the resolution of Section \ref{theres}, we show that every such $r$ value between $s$ and $2s-1$ may be achieved by choosing an ideal from the family introduced in Section \ref{extbetti}.

Finally, Section \ref{evencase} drops the top socle degree by $1$ and gives a rudimentary analysis of  grade $3$ ideals $I \subset k[x,y,z]$ defining compressed rings with $\soc (R/I) = k(-s) \oplus k(-2s+2)$. Such ideals are also trimmed Gorenstein ideals, hence resolved by Theorem \ref{12.6}. The possible Tor algebra structures are much more limited in this case, and we end with a question about the existence of ideals with a prescribed number of minimal generators that are also Tor algebra class $G(r)$, for a specified $r$.

\section{Compressed Rings and Inverse Systems}\label{sec2}

\begin{definition}
Let $A$ be a local Artinian $k$-algebra, where $k$ is a field and $\m$ denotes the maximal ideal. The top socle degree is the maximum $s$ with $\m^s \neq 0$ and the socle polynomial of $A$ is the formal polynomial $\sum_{i=0}^s c_i z^i$, where
$$c_i = \dim_k \frac{\socle (A) \cap \m^i}{\socle (A) \cap \m^{i+1}}.$$
An Artinian $k$-algebra is \emph{standard graded} if it is generated as an algebra in degree $1$. 
\end{definition}

\begin{definition}\label{compdef}
A standard graded Artinian $k$-algebra $A$ with embedding dimension $e$, top socle degree $s$, and socle polynomial $\sum_{i=0}^s c_i z^i$ is \emph{compressed} if
$$\dim_k \m^i/\m^{i+1} = \min \Big\{ \binom{e-1+i}{i} , \sum_{\ell=0}^s c_\ell \binom{e-1+\ell-i}{\ell - i} \Big\}$$
for $i =0, \dots , s$.
\end{definition}

\begin{setup}\label{setup1}
Let $n \geq 1$ be an integer and $k$ denote a field of arbitrary characteristic. Let $V$ be a vector space of dimension $n$ over $k$. Give the symmetric algebra $S(V) =: R$ and divided power algebra $D(V^*)$ the standard grading (that is, $S_1(V) = V$, $D_1 (V^*) = V^*$). The notation $S_i := S_i (V)$ denotes the degree $i$ component of the symmetric algebra on $V$. Similarly, the notation $D_i := D_i (V^*)$ denotes the degree $i$ component of the divided power algebra on $V^*$.

Given a homogeneous $I \subseteq S(V)$ defining an Artinian ring, there is an associated inverse system $0:_{D(V^*)} I$. Similarly, for any finitely generated graded submodule $N \subseteq D(V^*)$ there is a corresponding homogeneous ideal $0:_{S(V)} N$ defining an Artinian ring.  

If $I$ is a homogeneous ideal with associated inverse system minimally generated by elements $\phi_1 , \ \dots , \phi_k$ with $\deg \phi_i = s_i$, then there are induced vector space homomorphisms
$$\Phi_i : S_i \to \bigoplus_{j=1}^k D_{s_j - i}$$
sending $f \mapsto (f \cdot \phi_1 , \dots , f \cdot \phi_k)$. 
\end{setup}

\begin{observation}\label{obs2}
Let $I \subseteq S(V)$ be a homogeneous ideal with associated inverse system minimally generated by elements $\phi_1 , \ \dots , \phi_k$ with $\deg \phi_i = s_i$. If the induced maps $\Phi_i$ of Setup \ref{setup1} have maximal rank for all $i$, then the ring $R/I$ is compressed, as in Definition \ref{compdef}.
\end{observation}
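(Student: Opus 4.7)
The plan is to compute the Hilbert function of $R/I$ directly from the maps $\Phi_i$ via the inverse system correspondence, then verify that the maximal rank hypothesis forces this Hilbert function to match the compressed formula term-by-term.

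First I would note that $f \in I_i$ if and only if $f \cdot \phi_j = 0$ for every $j$, because $N = 0 :_{D(V^*)} I$ is generated by the $\phi_j$ and $I = 0 :_{S(V)} N$. This identifies $I_i = \ker \Phi_i$, so
\[
\dim_k (R/I)_i \;=\; \dim_k S_i - \dim_k \ker \Phi_i \;=\; \rank \Phi_i.
\]
Since $R/I$ is standard graded, $\dim_k \m^i/\m^{i+1} = \dim_k (R/I)_i$, so the compressed condition becomes an equality between $\rank \Phi_i$ and the minimum appearing in Definition \ref{compdef}, for every $i$.

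Next, I would read off the source and target dimensions of $\Phi_i$. The source has $\dim_k S_i = \binom{n-1+i}{i}$, which is the first term of the min. The target has total dimension $\sum_{j=1}^{k} \binom{n-1+s_j-i}{s_j - i}$, with the convention that this binomial vanishes when $s_j < i$; grouping the $\phi_j$ by degree rewrites this as $\sum_{\ell=0}^{s} a_\ell \binom{n-1+\ell-i}{\ell-i}$, where $a_\ell := \#\{\,j : s_j = \ell\,\}$.

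The one point requiring thought is the identification $a_\ell = c_\ell$. This is a standard consequence of graded Matlis duality between $R/I$ and its inverse system $N$: the socle of $R/I$ in degree $\ell$ is $k$-dual to $(N/\m N)_\ell$, whose dimension equals the number of minimal generators of $N$ in degree $\ell$, namely $a_\ell$. With $a_\ell = c_\ell$ in hand, the maximal rank hypothesis on $\Phi_i$ yields
\[
\rank \Phi_i \;=\; \min\Big(\binom{n-1+i}{i},\; \sum_{\ell=0}^{s} c_\ell \binom{n-1+\ell-i}{\ell-i}\Big),
\]
which is precisely the compressed equality of Definition \ref{compdef}. I expect the only mild obstacle to be the socle-versus-generators identification; everything else is direct rank and dimension bookkeeping.
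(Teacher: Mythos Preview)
Your proof is correct and follows essentially the same route as the paper: identify $I_i = \ker \Phi_i$, apply rank-nullity to get $\dim_k (R/I)_i = \rank \Phi_i$, then use the maximal rank hypothesis to equate this with the minimum of the source and target dimensions. The paper's proof stops at the expression $\min\{\dim_k S_i,\ \dim_k \bigoplus_j D_{s_j-i}\}$ without explicitly matching the target dimension to $\sum_\ell c_\ell \binom{e-1+\ell-i}{\ell-i}$; you supply that missing bookkeeping via the standard Macaulay/Matlis duality identification $a_\ell = c_\ell$, which is a reasonable extra step rather than a different approach.
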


\begin{proof}
By definition, $I_i = \ker \Phi_i$; by the rank-nullity theorem,
\begingroup\allowdisplaybreaks
\begin{align*}
    \dim_k (R/I)_i &= \dim_k \im \Phi_i \\
    &= \min \Big\{ \dim_k S_i , \dim_k \bigoplus_{j=1}^\ell D_{s_j - i } \Big\} \\
\end{align*}
\endgroup
where the latter equality follows by the assumption that $\Phi_i$ has maximal rank.
\end{proof}

\begin{definition}
Let $I \subseteq S(V)$ be a homogeneous ideal with associated inverse system minimally generated by elements $\phi_1 , \ \dots , \phi_k$ with $\deg \phi_i = s_i$. Let $m$ denote the first integer for which $\Phi_m$ is a surjection. Then $m$ is called the \emph{tipping point} of $I$; this is well defined since the rank of the domain and codomain of each $\Phi_i$ is increasing/decreasing in $i$, respectively (and the codomain is eventually $0$).
\end{definition}

\begin{prop}[\cite{miller}, Lemma 1.13]\label{proplol}
Let $\phi$ be a homogeneous element of $D(V^*)$ of degree $s$. Then the tipping point of the ideal $0 :_{S(V)} \phi$ is $\lceil s/2 \rceil$. In addition, the induced maps $\Phi_i$ satisfy the following properties for every integer $i$.
\begin{enumerate}[(a)]
    \item $\hom_{k} (\Phi_i , k) = \Phi_{s-i}$
    \item $\Phi_i$ is surjective if and only if $\Phi_{s-i}$ is injective.
\end{enumerate}
\end{prop}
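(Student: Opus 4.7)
The plan is to exploit the perfect $k$-bilinear pairing
$\mu_j : S_j \otimes_k D_j \to D_0 = k$, $(f,\psi) \mapsto f \cdot \psi$,
coming from the contraction action of $S(V)$ on $D(V^*)$. On the dual bases of monomials and divided monomials this is the standard evaluation pairing, hence non-degenerate, so it canonically identifies $D_j$ with $S_j^{\ast}$ and $S_j$ with $D_j^{\ast}$. With these identifications in hand, both (a) and (b) become formal consequences of duality, and the tipping-point statement reduces to a dimension count combined with (b).

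For part (a), I would compute the transpose of $\Phi_i : S_i \to D_{s-i}$ using the associativity of the $S(V)$-action on $D(V^*)$. For $f \in S_i$ and $g \in S_{s-i}$,
\[
\mu_{s-i}\bigl(g,\Phi_i(f)\bigr) \;=\; g \cdot (f \cdot \phi) \;=\; (fg)\cdot \phi \;=\; f \cdot (g \cdot \phi) \;=\; \mu_i\bigl(f,\Phi_{s-i}(g)\bigr).
\]
By the definition of the transpose, the equality of these two scalars for all $f, g$ says precisely that $\Phi_i^{\ast}$, rewritten via the identifications as a map $S_{s-i} \to D_i$, agrees with $\Phi_{s-i}$. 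This yields (a). Part (b) is then immediate from the standard linear-algebra fact that a map between finite-dimensional $k$-vector spaces is surjective if and only if its dual is injective.

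For the tipping-point claim, I would combine (b) with the monotonicity of $\dim S_i = \dim D_i = \binom{n-1+i}{i}$ in $i$. For $i < \lceil s/2 \rceil$ one has $\dim S_i < \dim D_{s-i}$, ruling out surjectivity of $\Phi_i$; hence the tipping point is at least $\lceil s/2 \rceil$. Conversely, by (b), surjectivity of $\Phi_{\lceil s/2 \rceil}$ is equivalent to injectivity of $\Phi_{\lfloor s/2 \rfloor}$, i.e., the absence of relations on $\phi$ in degree $\lfloor s/2 \rfloor$. I expect this converse to be the main obstacle, since it is the only step that does not follow from the formal pairing: it encodes a genuine non-degeneracy property of the generator $\phi$, which in the ambient setting of the paper is supplied by the standing compressedness hypothesis ensuring that every $\Phi_i$ has maximal rank.
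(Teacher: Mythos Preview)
The paper does not actually prove this proposition; it is quoted from \cite{miller} without argument, so there is no in-paper proof to compare against. Your treatment of parts (a) and (b) is correct and is the standard one: the contraction pairing $S_j\otimes_k D_j\to k$ is perfect, and the associativity computation $g\cdot(f\cdot\phi)=(fg)\cdot\phi=f\cdot(g\cdot\phi)$ shows that $\Phi_{s-i}$ is the transpose of $\Phi_i$ under the induced identifications, from which (b) follows by elementary linear algebra.

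Your diagnosis of the tipping-point assertion is also on target, and in fact the claim is false for arbitrary $\phi$: take $\dim V\geq 2$ and $\phi=(x^*)^{(s)}$, so that the image of every $\Phi_i$ is one-dimensional and $\Phi_i$ is surjective only at $i=s$, giving tipping point $s$ rather than $\lceil s/2\rceil$. The dimension inequality $\dim S_i<\dim D_{s-i}$ for $i<\lceil s/2\rceil$ gives only the lower bound. The equality $\lceil s/2\rceil$ is exactly the statement that $\Phi_{\lceil s/2\rceil}$ is surjective, equivalently by (b) that $\Phi_{\lfloor s/2\rfloor}$ is injective, which is the maximal-rank (compressedness) condition at that degree. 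So you are right that this part of the proposition carries an implicit hypothesis, and the paper only ever invokes it in contexts where compressedness has already been assumed or established (e.g.\ Propositions~\ref{alscomp} and~\ref{btab1}). There is nothing further to supply beyond what you have written; the honest formulation is that the first sentence of the proposition should be read under the maximal-rank hypothesis, while (a) and (b) hold unconditionally.
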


\begin{definition}
Adopt Setup \ref{setup1} with $R = S(V)$ and let $\psi : V \to R$. The Koszul complex $K_\bullet$ on $\psi$ is the complex obtained by setting 
$$K_i := \bigwedge^i V \otimes R(-i)$$
with differential
$$\delta_i : \bigwedge^i V \otimes R(-i) \to \bigwedge^{i-1} V \otimes R(-i+1)$$
defined as multiplication by $\psi \in V^*$ (where $\bigwedge^\bullet V$ is given the standard module structure over $\bigwedge^\bullet V^*$). 
\end{definition}

The following can be found as Proposition $2.5$ of \cite{boij}:

\begin{prop}\label{caval}
Let $I$ be a homogeneous ideal in $R:= S(V)$ of initial degree $t$, and set $A = R/I$. Then
$$\tor_i^R (A , k)_{i+t-1} \cong \ker (\pi ) \cap \ker (\delta_{i-1}), \quad i=2, \dots , n$$
where $\delta_{i-1}$ is the Koszul differential $\bigwedge^{i-1}V \otimes R_t \to \bigwedge^{i-2}V \otimes R_{t+1}$ and $\pi$ is the quotient map $\bigwedge^{i-1} V \otimes R_t \to \bigwedge^{i-1} V \otimes A_t$.
\end{prop}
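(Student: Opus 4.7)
The plan is to compute $\tor_i^R(A,k)$ as the homology of $K_\bullet \otimes_R A$, where $K_\bullet$ is the Koszul complex on a basis of $V$ (which is a free resolution of $k$ over $R$). In internal degree $i+t-1$ the complex reads
\[
\cdots \to \ts\bigwedge^{i+1} V \otimes A_{t-2} \xrightarrow{d_{i+1}} \bigwedge^i V \otimes A_{t-1} \xrightarrow{d_i} \bigwedge^{i-1} V \otimes A_t \xrightarrow{d_{i-1}} \bigwedge^{i-2} V \otimes A_{t+1} \to \cdots
\]
and $\tor_i^R(A,k)_{i+t-1} = \ker d_i / \im d_{i+1}$ at the spot $\bigwedge^i V \otimes A_{t-1}$.

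First I would use the assumption that $t$ is the initial degree of $I$: this forces $A_{t-1}=R_{t-1}$ and $A_{t-2}=R_{t-2}$, so the incoming and outgoing terms agree with the corresponding pieces of the Koszul complex of $R$, while only the arrow $d_i$ is affected by the quotient. Concretely, $d_i$ factors as $\pi\circ\delta_i$, where $\delta_i:\bigwedge^i V\otimes R_{t-1}\to\bigwedge^{i-1}V\otimes R_t$ is the Koszul differential and $\pi:\bigwedge^{i-1}V\otimes R_t\to\bigwedge^{i-1}V\otimes A_t$ is reduction mod $I$. Meanwhile $d_{i+1}=\delta_{i+1}$ exactly.

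Next I would define the candidate isomorphism by sending the class of $\omega\in\ker(\pi\circ\delta_i)$ to $\delta_i(\omega)$. Well-definedness on the quotient follows from $\delta_i\circ\delta_{i+1}=0$; that $\delta_i(\omega)$ lands in $\ker(\pi)\cap\ker(\delta_{i-1})$ follows from $\omega\in\ker(\pi\circ\delta_i)$ together with $\delta_{i-1}\circ\delta_i=0$. For surjectivity, given $\eta\in\ker(\pi)\cap\ker(\delta_{i-1})\subseteq\bigwedge^{i-1}V\otimes R_t$, acyclicity of the Koszul complex on $R$ in positive internal degree (here $i+t-1\geq t+1\geq 1$, since $i\geq 2$) produces $\omega\in\bigwedge^i V\otimes R_{t-1}$ with $\delta_i(\omega)=\eta$; since $\pi(\eta)=0$, we have $\omega\in\ker(\pi\circ\delta_i)$. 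For injectivity, if $\delta_i(\omega)=0$, the same Koszul acyclicity yields $\omega=\delta_{i+1}(\sigma)$, hence $[\omega]=0$ in the quotient.

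I do not expect a serious obstacle here; the whole argument hinges on the observation that $I_{t-1}=0$ forces $d_i=\pi\circ\delta_i$ while leaving all neighboring terms equal to those of the Koszul complex of $R$, after which one applies the acyclicity of that Koszul complex twice. The only thing to watch is the internal degree bookkeeping and the hypothesis $i\geq 2$ to guarantee that the degrees where we invoke acyclicity are strictly positive.
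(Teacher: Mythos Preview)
Your argument is correct. The paper does not supply its own proof of this proposition; it simply quotes it as Proposition~2.5 of \cite{boij}. Your approach---computing $\tor_i^R(A,k)$ via the Koszul resolution of $k$, using that $A_j=R_j$ for $j<t$ to identify $d_{i+1}$ with $\delta_{i+1}$ and factor $d_i=\pi\circ\delta_i$, then invoking acyclicity of the Koszul complex twice---is the standard one, and is in fact essentially how Boij proves it. The degree and index bookkeeping (in particular the use of $i\geq 2$ so that $H_{i-1}(K_\bullet)=H_i(K_\bullet)=0$) is handled correctly.
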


\begin{remark}\label{rk1}
Adopt notation and hypotheses of Setup \ref{setup1}. Let $\psi : V \to R $ be such that $\im \psi = R_+$. Observe that $\ker (\pi) \cap \ker ( \delta_{i-1})$ as in Proposition \ref{caval} is precisely $\ker ( \pi ) \cap \im (\delta_i)$ by exactness of the Koszul complex. The latter set may be described as the kernel of the composition of $k$-vector space homomorphisms
$$\xymatrix{\bigwedge^i V \otimes S_t (V) \ar[r]^-{\delta_i} & \bigwedge^{i-1} V \otimes S_{t+1} (V) \\
\ar[r]^-{1 \otimes \Phi_{t+1}} & \bigwedge^{i-1} V \otimes D_{c-t-1} (V^*). \\}$$
Denote the above composition of $k$-vector space homomorphisms by
$$\Theta_i (\phi)  : \bigwedge^i V \otimes S_t (V) \to \bigwedge^{i-1} V \otimes D_{c-t-1} (V^*).$$
\end{remark}

\begin{prop}\label{rank}
Adopt Setup \ref{setup1}. Let $A = R/I$ where $I =  (0 :_R \phi)$, $\phi \in D_c(V^*)$, and let $t$ denote the initial degree of $I$, $n = \pd_R R/I$. Then,
$$\dim_k \tor_i^R (A , k)_{i+t-1} = \binom{t-1+i-1}{i-1} \binom{t-1+n}{n-i} - \rank \Theta_i (\phi)$$
for all $i=2, \dots , n$.
\end{prop}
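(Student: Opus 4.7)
The strategy combines the kernel-intersection description of $\tor$ from Proposition~\ref{caval} and Remark~\ref{rk1} with the rank-nullity theorem, reducing the problem to computing the dimension of the image of a Koszul differential. First I would apply Proposition~\ref{caval}: combined with Koszul exactness, it identifies $\tor_i^R(A,k)_{i+t-1}$ with the intersection $\ker \pi \cap \im \delta_i \subseteq \bigwedge^{i-1} V \otimes S_t(V)$, where $\delta_i$ denotes the Koszul differential $\bigwedge^i V \otimes S_{t-1}(V) \to \bigwedge^{i-1} V \otimes S_t(V)$ and $\pi$ the canonical surjection onto $\bigwedge^{i-1} V \otimes A_t$. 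Since $I_t = \ker \Phi_t$, this intersection is precisely $\delta_i(\ker \Theta_i(\phi))$, where $\Theta_i(\phi) = (1 \otimes \Phi_t) \circ \delta_i$ is the composition highlighted in Remark~\ref{rk1}; consequently $\delta_i$ induces an isomorphism $\ker \Theta_i(\phi)/\ker \delta_i \cong \tor_i^R(A,k)_{i+t-1}$.

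Two applications of rank-nullity then yield
$$\dim_k \tor_i^R(A,k)_{i+t-1} \;=\; \dim_k \ker \Theta_i(\phi) - \dim_k \ker \delta_i \;=\; \dim_k \im \delta_i - \rank \Theta_i(\phi),$$
so the proof is reduced to identifying $\dim_k \im \delta_i$ with $\binom{t+i-2}{i-1}\binom{n+t-1}{n-i}$. To do this I would invoke the $\mathrm{GL}(V)$-equivariance of the Koszul complex together with the Pieri decomposition $\bigwedge^i V \otimes S_{t-1}(V) \cong \mathbb{S}_{(t,1^{i-1})}(V) \oplus \mathbb{S}_{(t-1,1^i)}(V)$. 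Since $\mathbb{S}_{(t-1,1^i)}(V)$ is not a constituent of the target $\bigwedge^{i-1} V \otimes S_t(V) \cong \mathbb{S}_{(t+1,1^{i-2})}(V) \oplus \mathbb{S}_{(t,1^{i-1})}(V)$, Schur's lemma forces this summand into $\ker \delta_i$; exactness of the Koszul complex then gives $\im \delta_i \cong \mathbb{S}_{(t,1^{i-1})}(V)$, and the Weyl dimension formula evaluates the dimension of this hook Schur module as $\binom{t+i-2}{i-1}\binom{n+t-1}{n-i}$.

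The main obstacle is this explicit dimension identity for $\mathbb{S}_{(t,1^{i-1})}(V)$. If one wishes to avoid Schur functors entirely, the same answer can be derived by iterating Koszul exactness to express $\dim_k \im \delta_i$ as the telescoping alternating sum $\sum_{k \geq 0} (-1)^k \binom{n}{i+k}\binom{n+t-2-k}{t-1-k}$ and then verifying the resulting binomial identity by induction on $t$ or via a Vandermonde-type manipulation.
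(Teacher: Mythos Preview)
Your reduction via Proposition~\ref{caval}, Remark~\ref{rk1}, and rank--nullity is exactly the paper's route: both arguments arrive at $\dim_k\tor_i^R(A,k)_{i+t-1}=\dim_k\im\delta_i-\rank\Theta_i(\phi)$, and you are in fact more careful than the paper in explicitly passing to $\ker\Theta_i(\phi)/\ker\delta_i$ before applying rank--nullity. The genuine divergence is in how $\dim_k\im\delta_i$ is evaluated. The paper truncates the Koszul complex to write $\dim_k\im\delta_i$ as an alternating sum of products of binomial coefficients and then invokes \cite[Lemma~1.2]{crv} to collapse that sum to the desired closed form. Your primary route instead uses $\mathrm{GL}(V)$-equivariance and the Pieri decomposition to identify $\im\delta_i$ with the hook Schur module $\mathbb{S}_{(t,1^{i-1})}(V)$, whose dimension is read off from the Weyl formula. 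This is conceptually cleaner and explains \emph{why} the closed form is a product of two binomials, whereas the paper's approach outsources the combinatorics to an external reference; on the other hand, the Pieri decomposition and Schur's lemma require semisimplicity of $\mathrm{GL}(V)$-representations, so your primary argument is only valid over a field of characteristic zero, while Setup~\ref{setup1} allows arbitrary characteristic. Your proposed fallback---the telescoping alternating sum coming from iterated Koszul exactness---is precisely the characteristic-free computation the paper performs, so with that alternative in place your proof is complete in the stated generality.

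One small bookkeeping point: your $\Theta_i(\phi)$ is built from $\Phi_t$ and the Koszul map landing in $\bigwedge^{i-1}V\otimes S_t$, whereas Remark~\ref{rk1} writes the composition with $\Phi_{t+1}$ landing in degree $t+1$. Your convention is the one that actually matches the binomial coefficients in the statement of Proposition~\ref{rank}, so this is really a discrepancy in the paper's indexing rather than in your argument; just be aware of it when aligning your write-up with the paper's notation.
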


\begin{proof}
First observe that the minimal homogeneous free resolution of 
$$\im (\delta_i : \bigwedge^{i+1} V \otimes R_{t-1} \to \bigwedge^i V \otimes R_t)$$ 
is obtained by truncating the Koszul complex:
$$0 \to \bigwedge^{i+t} V \otimes R_0 \to \dots \to \bigwedge^{i+1} V \otimes R_{t-1} \to \im(\delta_i) \to 0$$
whence
\begin{equation*}
    \begin{split}
        \dim \im (\delta_i) &= \sum_{j=1}^t (-1)^{j+1} \dim \Big( \bigwedge^{i+j} V \otimes R_{t-j} \Big) \\
        &= \sum_{j=1}^t (-1)^{j+1} \binom{n+1}{i+j} \cdot \binom{n+t-j}{t-j}. \\
    \end{split}
\end{equation*}
By Lemma $1.2$ of \cite{crv}, this sum is equal to $\binom{i+t-1}{i} \cdot \binom{n+t}{i+t}$. Similarly, by construction $\dim \ker (\Theta_i (\phi)) = \dim \big( \ker ( \pi) \cap  \im (\delta_i) \big)$. By exactness of the Koszul complex, $\im (\delta_i) = \ker (\delta_{i-1})$; combining this with Proposition \ref{caval}:
$$\dim \tor_i^R (A , k)_{i+t-1} = \dim \ker ( \Theta_i (\phi) ).$$
By the rank-nullity theorem,
$$\dim \tor_i^R (A , k)_{i+t-1} = \dim \im (\delta_i) - \rank_k \Theta_i (\phi)$$
\end{proof}

\begin{cor}\label{gener}
Adopt notation and hypotheses as in Setup \ref{setup1}. Then there is a nonempty open set $U$ in the Grassmannian parametrizing all $1$-dimensional subspaces of $D_c (V^*)$ such that the Betti numbers of the $k$-algebra $A = R/ (0:_{S(V)} \phi) $ are the same for all $[\phi]$ in $U$ (where $[\phi]$ denotes the class of the subspace spanned by $\phi \in D_c (V^*)$). 
\end{cor}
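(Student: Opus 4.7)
The plan is to show that every graded Betti number $\beta_{i,j}^R(A(\phi))$ is an upper semicontinuous function of $[\phi]$ on $\mathbb{P}(D_c(V^*))$, then take $U$ to be the intersection of the finitely many open loci on which the (at most finitely many) nonzero Betti numbers attain their minimum values.

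First I would carve out a nonempty open $U_0 \subseteq \mathbb{P}(D_c(V^*))$ on which the Hilbert function of $A(\phi)$ and the initial degree $t(\phi)$ of $I(\phi)$ are constant. Since $\dim_k A(\phi)_i = \rank \Phi_i(\phi)$, and each $\Phi_i(\phi)$ has matrix entries linear in the coordinates of $\phi$, the rank is lower semicontinuous in $[\phi]$ by the standard minors criterion. Hence $\dim_k A(\phi)_i$ is upper semicontinuous and attains its pointwise maximum on a nonempty open subset; $t(\phi)$, being the smallest index at which $\Phi_i(\phi)$ fails to be injective, stabilizes there as well. The projective dimension $\pd_R A(\phi)$ equals the embedding dimension $n$ throughout, since $A(\phi)$ is Artinian (Auslander--Buchsbaum).

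Next I would extend Proposition \ref{rank} so that every $\beta_{i,j}^R(A(\phi))$, not only the top linear strand $j = i+t-1$ treated there, is written as the nullity of a $k$-linear map whose matrix has entries polynomial in the coordinates of $\phi$. The Koszul argument underlying Propositions \ref{caval} and \ref{rank} goes through in any graded strand: $\tor_i^R(A(\phi),k)_j$ is the kernel of a composition built from the Koszul differentials on $V \to R$ and the maps $\Phi_\ell(\phi)$ that encode the $R$-module structure on $D(V^*)$. The same minors argument then shows each $\beta_{i,j}^R(A(\phi))$ is upper semicontinuous on $U_0$. A convenient shortcut avoids most of the bookkeeping: since $I(\phi) = 0:_R \phi$ is the annihilator of a single element, $A(\phi)$ is Gorenstein Artinian of socle degree $c$, so its minimal resolution is self-dual and the Betti numbers in any strand are obtained from those in the top linear strand of Proposition \ref{rank}.

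Finally, the regularity and projective dimension of $A(\phi)$ are bounded uniformly on $U_0$, so only finitely many pairs $(i,j)$ admit nonzero $\beta_{i,j}^R(A(\phi))$. The locus
\[
U := \bigcap_{(i,j)} \Bigl\{ [\phi] \in U_0 : \beta_{i,j}^R(A(\phi)) = \min_{[\psi] \in U_0} \beta_{i,j}^R(A(\psi)) \Bigr\}
\]
is then a finite intersection of nonempty open subsets of the irreducible variety $\mathbb{P}(D_c(V^*))$, hence nonempty and open, and all graded Betti numbers of $A(\phi)$ are constant on $U$. The main obstacle is the upgrade of Proposition \ref{rank} to Betti numbers outside the top linear strand; this is a direct but notationally tedious Koszul calculation, and the appeal to Gorenstein self-duality sidesteps it almost entirely.
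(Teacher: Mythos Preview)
Your approach is correct but genuinely different from the paper's. The paper constructs $U$ explicitly as the locus where every map $\Theta_i(\phi)$ of Remark~\ref{rk1} attains maximal rank. After observing that, in the natural bases, the matrix entries of $\Theta_i(\phi)$ are signed coordinates $p_\ell$ of $[\phi]\in\bbp^{\binom{c+2}{2}-1}$, the complement of $U$ is a finite union of determinantal hypersurfaces and hence closed. Proposition~\ref{rank} then yields constancy of $\dim_k\tor_i^R(A,k)_{i+t-1}$ on $U$; the remaining graded Betti numbers are forced by Gorenstein self-duality together with the relations of Proposition~\ref{equalities}, since on $U$ each $\Phi_i$ has maximal rank and $A(\phi)$ is automatically compressed.

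Your route---upper semicontinuity of every $\beta_{i,j}$ in the family, followed by intersecting the finitely many minimum loci over an irreducible base---is the standard semicontinuity argument and is more portable: it goes through verbatim for inverse systems with several generators, where no self-duality shortcut exists. The paper's version, by contrast, is more concrete and pins the generic Betti table to the ranks of the explicit maps $\Theta_i$, which is exactly what is exploited downstream in Proposition~\ref{btab3}. One small slip in your write-up: since $\dim_k A(\phi)_i=\rank\Phi_i(\phi)$ and rank is \emph{lower} semicontinuous, so is $\dim_k A(\phi)_i$; your conclusion that the generic (maximal) Hilbert function is attained on a nonempty open $U_0$ is still correct, just for the opposite reason.
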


\begin{proof}
Take the open subset $U$ to be the set of all $1$-dimensional subspaces $[\phi]$ of $D_c (V^*)$ such that $\Theta_i (\phi)$ has maximal rank for each $i=2, \dots , n$. 

We may identify the Grassmannian $\textrm{Gr} (1, D_c (V^*))$ with the projective space $\bbp^{\binom{c+2}{2}-1}$, so it suffices to show that the complement of $U$ is the zero set of homogeneous polynomials in the variables $p_1 , \dots , p_{\binom{c+2}{2}}$, where $[\phi ] = [p_1 : \cdots : p_{\binom{c+2}{2}}]$. 

Let $\epsilon_\beta$ denote any standard basis element of $\bigwedge^i V$, so $\beta = (\beta_1 , \dots , \beta_i)$ with $\beta_1 < \cdots < \beta_i$. Let $m \in S_t (V)$ be any degree $t$ monomial. We compute
$$\Theta_i (\phi)(m) = \sum_{j=1}^i (-1)^{i+1} \epsilon_{\beta \backslash \beta_j} \otimes (\psi(\epsilon_{\beta_j})m)\cdot \phi,$$
implying that the matrix representation of $\Theta_i ( \phi)$ has entries of the form $\pm p_\ell$, for $\ell=1 , \dots , \binom{c+2}{2}$, where the basis chosen for $\bigwedge^{i-1} V \otimes D_{c-t-1} (V^*)$ consists of the tensor products of the standard basis for $\bigwedge^{i-1} V$ and the monomial basis for $D_{c-t-1} (V^*)$. 

The complement of $U$ is the union of the zero sets of the determinant of the above matrix representation for each $i=2, \dots , n$, which is a homogeneous polynomial in the $p_\ell$. As a finite union of closed sets, this set is closed. Thus $U$ is an open set, and by Proposition \ref{rank}, any $[\phi] \in U$ gives rise to an ideal $(0:_R \phi)$ whose Betti numbers are independent of the choice of $\phi$.
\end{proof}

\section{Generic Betti Numbers for Grade $3$ Gorenstein Ideals}\label{genbetti}

\begin{definition}
A standard graded Artinian algebra is \emph{level} if its socle is concentrated entirely in a single degree.
\end{definition}

\begin{prop}[\cite{boij}, Proposition 3.6]\label{equalities}
Let $R/I$ be a standard graded compressed level Artinian algebra of embedding dimension $r$, socle degree $c$, socle dimension $m$, and assume $I$ has initial degree $t$. Then
\begin{equation*}
    \begin{split}
        &\dim_k \tor_i^R (R/I , k)_{t+i-1} - \dim_k \tor_{i-1}^R (R/I , k)_{t+i-1} = \\
        &\binom{t-1+i-1}{i-1} \cdot \binom{t-1+r}{r-i} - m \binom{c-t+r-i}{r-i} \cdot \binom{c-t+r}{i-1} \\
    \end{split}
\end{equation*}
for $i=1, \dots , r-1$.
\end{prop}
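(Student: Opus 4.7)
The plan is to compute an Euler characteristic on the Koszul complex and identify it with the claimed difference. Specifically, consider the internal degree $t+i-1$ strand of $K_\bullet(\mathbf{x}; R/I)$, namely the chain complex with terms $C_j := \bigwedge^j V \otimes (R/I)_{t+i-1-j}$, whose $j$-th homology group computes $\tor_j^R(R/I, k)_{t+i-1}$. The complex $C_\bullet$ splits naturally at position $i$: for $j \geq i$ the underlying polynomial degree $t+i-1-j \leq t-1$ is below the initial degree of $I$, forcing $C_j = \bigwedge^j V \otimes R_{t+i-1-j}$, while for $j \leq i-1$ the compressed level hypothesis forces $\dim (R/I)_{t+i-1-j} = m\binom{c-t-i+j+r}{r-1}$.

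The next step is to establish that only $\tor_i$ and $\tor_{i-1}$ contribute to the Euler characteristic of $C_\bullet$ in this internal degree. Vanishing of $\tor_j^R(R/I,k)_{t+i-1}$ for $j \geq i+1$ is immediate: by the description above, $C_\bullet$ agrees with a truncation of the exact Koszul complex on $R$ in those positions. For the lower positions $j \leq i-2$, one invokes the compressed level structure to argue that the induced Koszul-type maps on $(R/I)$-pieces inherit maximal rank from the maximal rank of the maps $\Phi_\bullet$ of Setup \ref{setup1} (as in Observation \ref{obs2}), forcing exactness of $C_\bullet$ at those positions as well. This is the main obstacle and is where the full compressed level assumption (rather than merely the initial-degree data) enters.

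Granting the vanishing, the Euler characteristic reduces to $(-1)^i(\dim \tor_i^R(R/I,k)_{t+i-1} - \dim \tor_{i-1}^R(R/I,k)_{t+i-1})$. On the other hand, computing $\chi(C_\bullet) = \sum_j (-1)^j \dim C_j$ directly and splitting the sum at $j = i-1$: the upper piece $\sum_{j=i}^{r} (-1)^j\binom{r}{j}\binom{t+i-j+r-2}{r-1}$ simplifies via the same Vandermonde-type identity used in the proof of Proposition \ref{rank} (Lemma $1.2$ of \cite{crv}) to $(-1)^i\binom{t+i-2}{i-1}\binom{t+r-1}{r-i}$, while the lower piece $\sum_{j=0}^{i-1}(-1)^j\binom{r}{j} m\binom{c-t-i+j+r}{r-1}$ analogously becomes $(-1)^{i-1}m\binom{c-t+r-i}{r-i}\binom{c-t+r}{i-1}$. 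Canceling the global sign $(-1)^i$ yields the claimed formula; the combinatorial identities themselves are routine bookkeeping.
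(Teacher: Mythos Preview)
The paper does not supply its own proof of this proposition; it is quoted from \cite{boij} without argument, so there is no in-paper proof to compare against.

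Your strategy is the natural one and is correct in outline: read off the strand of the Koszul complex in internal degree $t+i-1$, show that only $\tor_i$ and $\tor_{i-1}$ survive there, and evaluate the alternating sum $\sum_j(-1)^j\dim C_j$ by splitting at $j=i$. The upper vanishing ($j\ge i+1$) and the two binomial simplifications are routine, exactly as you indicate.

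The step that needs a sharper statement is the lower vanishing ($j\le i-2$). The phrase ``the induced Koszul-type maps inherit maximal rank from $\Phi_\bullet$'' is the right intuition but is not yet an argument. Here is how to make it precise. For $d\ge t$ the compressed level hypothesis (which is equivalent to maximal rank of every $\Phi_d$, the converse of Observation~\ref{obs2}) forces $\Phi_d$ to be surjective, so $\Phi_d$ induces an isomorphism $(R/I)_d\xrightarrow{\ \sim\ } D_{c-d}^{\,m}$. Because $\Phi$ is a homomorphism of $R$-modules, these isomorphisms intertwine the Koszul differentials on $R/I$ with those on $D(V^*)^m$. Hence the tail $(C_j)_{j\le i-1}$ of your complex is identified with $m$ copies of a graded strand of the Koszul complex on $D(V^*)=E_R(k)$. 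That complex is exact in homological degrees $0,\ldots,r-1$, since $\tor_j^R\!\big(E_R(k),k\big)=0$ for $j<r$ (dually, $\Ext^j_R(k,R)=0$ for $j<r$). This yields the exactness at positions $j\le i-2$ that you need, and with it the Euler-characteristic computation goes through exactly as you sketch.
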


\begin{prop}\label{btab1}
Let $R = \textrm{k} [x,y,z]$ be standard graded and $I$ a homogeneous grade $3$ Gorenstein ideal with $R/I$ compressed and $\socle (R/I) = k(-2s+1)$ for some integer $s$. Then $R/I$ has Betti table of the form
\begin{equation*}\begin{tabular}{L|L|L|L|L}
     & 0 & 1 & 2 & 3  \\
     \hline 
   0  & 1 & 0 & 0 & 0 \\
   \hline
   s-1 & 0 &s+1  & b & 0 \\
   \hline 
   s & 0 &  b& s+1 & 0 \\
   \hline 
   2s-1 & 0 & 0 & 0 &1 \\
\end{tabular}
\end{equation*}
where $b$ is some integer. Moreover, $b \leq s$.
\end{prop}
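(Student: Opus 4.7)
The plan is to first pin down the shape of the Betti table using the Hilbert function of a compressed Gorenstein ring together with self-duality, and then to establish the inequality $b \leq s$ via the Buchsbaum--Eisenbud structure theorem.

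I would first compute the Hilbert function. Since $R/I$ is compressed with socle concentrated in degree $c = 2s-1$, one has $H_{R/I}(i) = \min\{\binom{i+2}{2}, \binom{2s+1-i}{2}\}$, so $I_{s-1} = 0$ and $\dim_k I_s = \binom{s+2}{2} - \binom{s+1}{2} = s+1$. In particular the initial degree of $I$ is $s$, giving $\beta_{1,s} = \dim_k I_s = s+1$. The Buchsbaum--Eisenbud structure theorem then supplies a minimal free resolution
$$0 \to R(-2s-2) \to F_2 \to F_1 \to R \to R/I \to 0$$
with $F_2 \cong F_1^{\vee}(-2s-2)$. Writing $F_1 = \bigoplus_j R(-a_j)$, the constraints $a_j \geq s$ (initial degree) together with $2s+2-a_j \geq s+1$ (minimality of first syzygies) force $a_j \in \{s, s+1\}$, and dually $F_2$ has summands only in degrees $s+1$ and $s+2$. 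Setting $\beta_{1,s+1} = b$, self-duality then gives $\beta_{2,s+2} = \beta_{1,s} = s+1$ and $\beta_{2,s+1} = b$, reproducing the claimed table.

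For the bound $b \leq s$, I would use that $F_2 \to F_1$ is represented by a skew-symmetric $(s+1+b) \times (s+1+b)$ matrix $T$ whose submaximal Pfaffians form a minimal generating set of $I$. With respect to $F_1 = R(-s)^{s+1} \oplus R(-s-1)^b$ and its dual, $T$ takes the block form
$$T = \begin{pmatrix} Q & L \\ -L^T & 0 \end{pmatrix},$$
where $Q$ is $(s+1)\times(s+1)$ skew with quadratic entries, $L$ is $(s+1)\times b$ with linear entries, and the lower-right $b\times b$ block vanishes by minimality (its entries would be constants). The Pfaffian of $T$ with the $j$-th row and column removed, for $1 \leq j \leq s+1$, is (up to sign) the $j$-th degree $s$ minimal generator of $I$. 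Expanding it as a signed sum over perfect matchings of the remaining $s+b$ indices, no pair may be drawn from the vanishing lower block, so each of the $b$ lower indices must be paired with an upper index via $L$. If $b > s$, there are only $s$ upper indices available after deleting row $j$, so no valid matching exists and the Pfaffian vanishes identically, contradicting the fact that it is a minimal generator. Hence $b \leq s$.

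The hard part will be making the Pfaffian/matching argument airtight: one must set up the block form of $T$ with careful sign conventions and verify that every matching contributing to the submaximal Pfaffian is forced to contain a pair inside the zero block once $b$ exceeds $s$. Everything preceding that is a clean assembly of the compressed Hilbert function with Gorenstein self-duality.
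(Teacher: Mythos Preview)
Your argument is correct, and for the bound $b\le s$ it is genuinely different from the paper's. The paper obtains the shape of the Betti table by plugging into Boij's formula (Proposition~\ref{equalities}), which is equivalent to your direct Hilbert-function/self-duality computation. For the inequality, however, the paper writes the Betti table as a $\mathbb{Q}_{\ge 0}$-combination of three pure Boij--S\"oderberg diagrams and observes that the middle coefficient $\frac{(s+1)^2-1-(s+1)b}{(s+1)^2-1}$ becomes negative once $b\ge s+1$, contradicting \cite{boijsod}. Your route instead exploits the Buchsbaum--Eisenbud skew matrix directly: the degree-zero $b\times b$ block vanishes by minimality, so any perfect matching contributing to a degree-$s$ submaximal Pfaffian must pair each of the $b$ ``lower'' indices with one of the $s$ remaining ``upper'' indices, forcing $b\le s$. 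This is more elementary and self-contained (no Boij--S\"oderberg machinery), and it makes transparent \emph{why} the obstruction is combinatorial; the paper's approach, by contrast, situates the bound in a broader numerical framework and requires no analysis of the presentation matrix. One small point worth tightening in your write-up: the claim that first syzygies sit in degree $\ge s+1$ is not literally ``minimality'' but follows because a summand $R(-c)$ of $F_2$ with $c\le s$ would map by degree-$\le 0$ (hence zero) entries into $F_1$, contradicting injectivity of $d_2$.
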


\begin{proof}
Employ Proposition \ref{equalities}, where $r=3$, $c = 2s-1$, $m=1$, and $t = s$ ($= \lceil (2s-1)/2 \rceil$; see Proposition \ref{proplol}). Using the notation
$$T_i := \tor_i^R (R/I,k),$$
we obtain
\begin{equation*}
    \begin{split}
       & \dim (T_1)_{s} = s+1 \\
       &\dim(T_2)_{s+1} - \dim(T_1)_{s+1} = 0 \\
       &\dim (T_2)_{s+2} = s+1. \\
    \end{split}
\end{equation*}
Thus define $b:= \dim_k (T_2)_{s+1}$. The final claim that $b \leq s$ follows from the fact that the Betti table has the following decomposition into standard pure Betti diagrams:
\begin{equation*}
    \begin{split}
      & \frac{b}{2(s+1)^2-2} \cdot \begin{tabular}{L|L|L|L|L}
     & 0 & 1 & 2 & 3  \\
     \hline 
   0  & s+2 & 0 & 0 & 0 \\
   \hline
   s-1 & 0 &2(s+1)^2  & 2(s+1)^2-2 & 0 \\
   \hline 
   s & 0 & 0 & 0 & 0 \\
   \hline 
   2s-1 & 0 & 0 & 0 &s \\
\end{tabular} \\
+ & \frac{(s+1)^2-1 - (s+1)b}{(s+1)^2-1} \cdot  \begin{tabular}{L|L|L|L|L}
     & 0 & 1 & 2 & 3  \\
     \hline 
   0  & 1 & 0 & 0 & 0 \\
   \hline
   s-1 & 0 &s+1  & 0 & 0 \\
   \hline 
   s & 0 & 0 & s+1 & 0 \\
   \hline 
   2s-1 & 0 & 0 & 0 &1 \\
\end{tabular} \\
+ & \frac{b}{2(s+1)^2-2} \cdot  \begin{tabular}{L|L|L|L|L}
     & 0 & 1 & 2 & 3  \\
     \hline 
   0  & s & 0 & 0 & 0 \\
   \hline
   s-1 & 0 &0  & 0 & 0 \\
   \hline 
   s & 0 & 2(s+1)^2-2 & 2(s+1)^2 & 0 \\
   \hline 
   2s-1 & 0 & 0 & 0 &s+2 \\
\end{tabular} \\
    \end{split}
\end{equation*}
If $b \geq s+1$, then the middle coefficient of the above decomposition becomes negative, which is a contradiction to results of Boij-S\"oderberg theory (see, for instance, \cite[Theorem 2]{boijsod}).
\end{proof}

In the following, recall that the notation $[\phi] \in \textrm{Gr} (1 , D_c (V^*))$ means the class of the subspace spanned by the element $\phi \in D_c (V^*)$.

\begin{prop}\label{btab3}
Let $R = S(V)$ be standard graded, where $V$ is a $3$-dimensional vector space over a field $k$. If $s$ is even, then there is a nonempty open set $U$ in the Grassmannian parametrizing all $1$-dimensional subspaces of $D_{2s-1} (V^*)$ such that for all $[\phi] \in U$, $I := (0:_{S(V)} \phi)$ has Betti table
\begin{equation*}\begin{tabular}{L|L|L|L|L}
     & 0 & 1 & 2 & 3  \\
     \hline 
   0  & 1 & 0 & 0 & 0 \\
   \hline
   s-1 & 0 &s+1  & 0 & 0 \\
   \hline 
   s & 0 &  0& s+1 & 0 \\
   \hline 
   2s-1 & 0 & 0 & 0 &1 \\
\end{tabular}
\end{equation*}
If $s$ is odd, then there is a nonempty open set $U$ in the Grassmannian parametrizing all $1$-dimensional subspaces of $D_{2s-1} (V^*)$ such that for all $[\phi] \in U$, $I := (0:_{S(V)} \phi)$ has Betti table
\begin{equation*}\begin{tabular}{L|L|L|L|L}
     & 0 & 1 & 2 & 3  \\
     \hline 
   0  & 1 & 0 & 0 & 0 \\
   \hline
   s-1 & 0 &s+1  & 1 & 0 \\
   \hline 
   s & 0 &  1& s+1 & 0 \\
   \hline 
   2s-1 & 0 & 0 & 0 &1 \\
\end{tabular}
\end{equation*}
\end{prop}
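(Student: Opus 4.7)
My plan is to combine Corollary \ref{gener} and Proposition \ref{btab1} to reduce the statement to determining a single integer $b$, and then to pin $b$ down using the Buchsbaum--Eisenbud structure theorem (for parity) together with Proposition \ref{rank} (for the generic value).

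First, I apply Corollary \ref{gener} to produce a nonempty open subset $U \subseteq \textrm{Gr}(1, D_{2s-1}(V^*))$ on which the Betti numbers of $R/(0:_{S(V)}\phi)$ are constant; intersecting with the compressed locus from Observation \ref{obs2}, I may assume every $[\phi] \in U$ defines a compressed Gorenstein quotient with socle $k(-(2s-1))$ (the Gorenstein property being automatic since the inverse system is singly generated). Proposition \ref{btab1} then shows the Betti table has the indicated shape for some integer $b$ with $0 \leq b \leq s$, and in particular $\mu(I) = \beta_1 = (s+1) + b$ for $[\phi] \in U$.

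Second, by the Buchsbaum--Eisenbud structure theorem for grade $3$ Gorenstein ideals, $\mu(I)$ must be odd, so $b \equiv s \pmod 2$. This yields $b \geq 0$ for $s$ even and $b \geq 1$ for $s$ odd, matching the minima claimed in the two cases of the proposition. It therefore suffices to show these minima are attained on $U$.

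To this end, I apply Proposition \ref{rank} with $i = 2$, $t = s$, $n = 3$, which expresses $b$ as a combinatorial constant minus $\rank \Theta_2(\phi)$. By the proof of Corollary \ref{gener}, on $U$ the rank of $\Theta_2(\phi)$ is maximal in the linear family parametrized by $\phi \in D_{2s-1}(V^*)$, so the task reduces to computing this maximum. The main obstacle is this rank computation; the most natural route is to exhibit for each parity an explicit $\phi \in D_{2s-1}(V^*)$ realizing the claimed minimum value of $b$ --- for instance, as the Macaulay inverse system of a Pfaffian ideal coming from an $(s+1) \times (s+1)$ skew-symmetric matrix of generic linear forms when $s$ is even, and from an $(s+2) \times (s+2)$ skew-symmetric matrix with a single row and column of quadratic entries when $s$ is odd, each of which is easily checked to be compressed in sufficiently generic cases. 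Upper semicontinuity of Betti numbers then propagates the minimum to a dense open subset, which coincides with $U$ after intersecting with the open set on which Betti numbers are constant.
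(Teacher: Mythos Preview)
Your overall strategy matches the paper's: reduce via Corollary~\ref{gener} and Proposition~\ref{btab1} to determining the minimum value of $b$, bound $b$ below by the parity constraint from Buchsbaum--Eisenbud, and exhibit explicit Pfaffian ideals attaining the bound. The gap is in your explicit constructions, where the degrees are wrong.

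An $(s+1)\times(s+1)$ skew-symmetric matrix with \emph{linear} entries has submaximal Pfaffians of degree $s/2$, giving a Gorenstein quotient with socle degree $s-2$, not $2s-1$; so the resulting $\phi$ does not lie in $D_{2s-1}(V^*)$ at all. To land in the right socle degree the entries must be quadratic, and indeed the paper uses the monomials $x^2,y^2,z^2$ in a banded pattern, so that the $s\times s$ Pfaffians have degree $s$ and the back twist is $R(-2s-2)$. Similarly, in the odd case the paper's $(s+2)\times(s+2)$ matrix has degree-$2$ entries with a single row and column of \emph{linear} (not quadratic) entries, producing $s+1$ generators in degree $s$ and one in degree $s+1$; your description inverts the degrees and does not yield homogeneous Pfaffians of the required degrees (the degree equations $c-2d=1$, $c-d-d'=2$ with $c=2s+2$ force $d$ to be a half-integer). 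With the corrected degree patterns your argument goes through and coincides with the paper's, though one must still verify (as the paper does by citing \cite{BE} and \cite{elkustin}) that the resulting Pfaffian ideal actually has grade $3$; this is not automatic for a specific matrix of forms in only three variables.
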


\begin{proof}
The goal is to find minimal values for $b$, where $b$ is as in Proposition \ref{btab1}, since $b$ is minimized precisely when the rank of $\Theta_i (\phi)$ is maximized by Proposition \ref{rank}. To this end, we exhibit an explicit $I$ for each $s$ attaining the Betti table as in the statement and argue that no smaller values of $b$ can be obtained. The matrices used below are those from Proposition $6.2$ of \cite{BE} with minor alterations; in our case, some of the entries are squared. 

Choosing a basis for $V$, we may view $S(V)$ as the standard graded polynomial ring $k[x,y,z]$. Assume first that $s$ is even. Consider the $(s+1)\times (s+1)$ alternating matrix
$$\begin{pmatrix}
0 & x^2 & 0 & \cdots & 0& z^2 \\
-x^2 & 0 & y^2 & \cdots & z^2 & 0 \\
0 & -y^2 & 0 & \cdots & & 0 \\
\vdots & & & & & \vdots \\
-z^2 & 0&\cdots  & & & 0 \\
\end{pmatrix}$$
To see the pattern more clearly, the first two matrices are
$$H^{ev}_1 = \begin{pmatrix}
0 & x^2 & z^2 \\
-x^2 & 0 & y^2 \\
-z^2 & -y^2 & 0 \\
\end{pmatrix}, \quad H^{ev}_2 = \begin{pmatrix}
0 & x^2 & 0 & 0 & z^2 \\
-x^2 & 0 & y^2 & z^2 & 0 \\
0 & -y^2 & 0 & x^2 & 0 \\
0 & -z^2 & -x^2 & 0 & y^2 \\
-z^2 & 0 & 0 & -y^2 & 0 \\
\end{pmatrix}$$
The ideal generated by the $(s) \times (s)$ Pfaffians has grade $3$ according to section $6$ of \cite{BE} (a much more explicit generating set is exhibited in Proposition $7.6$ of \cite{elkustin}), and hence has minimal free resolution
$$0 \to R(-2s+1) \to R(-s-2)^{s+1} \to R(-s)^{s+1} \to R.$$
The above gives an ideal for which $b = 0$, and this is clearly the smallest possible.

Similarly, if $s $ is odd, consider the following $(s+2) \times (s+2)$ matrix:
$$\begin{pmatrix}
0 & x^2 & 0 & \cdots & 0& z \\
-x^2 & 0 & y^2 & \cdots & z^2 & 0 \\
0 & -y^2 & 0 & \cdots & & 0 \\
\vdots & & & & & \vdots \\
-z & 0&\cdots  & & & 0 \\
\end{pmatrix}$$
The first two matrices in this case are
$$H^{odd}_1 = \begin{pmatrix}
0 & x^2 & z \\
-x^2 & 0 & y \\
-z & -y & 0 \\
\end{pmatrix}, \quad H^{odd}_2 = \begin{pmatrix}
0 & x^2 & 0 & 0 & z \\
-x^2 & 0 & y^2 & z^2 & 0 \\
0 & -y^2 & 0 & x^2 & 0 \\
0 & -z^2 & -x^2 & 0 & y \\
-z & 0 & 0 & -y & 0 \\
\end{pmatrix}$$
Again, the ideal generated by the submaximal Pfaffians is grade $3$ Gorenstein with $b= 1$. Moreover, no smaller value of $b$ can be achieved since otherwise the ideal would have an even number of minimal generators, which is impossible by work of Watanabe in \cite{watanabe} or Corollary $2.2$ of \cite{BE}.
\end{proof}

\section{Some Structure in the Grade 3 Setting}\label{gr3}

\begin{setup}\label{setup2}
Let $k$ be a field and let $R = k[x,y,z]$ be a standard graded polynomial ring over a field $k$. Let $I \subset R$ be a grade $3$ homogeneous ideal defining a compressed ring with $\soc (R/I) = k(-s) \oplus k(-2s+1)$, where $s \geq 3$. 

Write $I = I_1 \cap I_2$ for $I_1$, $I_2$ homogeneous grade $3$ Gorenstein ideals defining rings with socle degrees $s$ and $2s-1$, respectively. The notation $R_+$ will denote the irrelevant ideal ($R_{>0}$).
\end{setup}

\begin{prop}\label{alscomp}
Adopt Setup \ref{setup2}. Then the ideal $I_2$ defines a compressed ring.
\end{prop}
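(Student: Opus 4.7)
The plan is to verify the compressedness of $R/I_2$ directly via Observation~\ref{obs2}, which reduces the problem to showing that the induced Macaulay-duality map
$$\Phi_i^{(2)} : S_i \to D_{2s-1-i}, \quad f \mapsto f \cdot \phi_2$$
has maximal rank for every $i$, where $\phi_2 \in D_{2s-1}$ generates the inverse system of $I_2$. Writing $\phi_1 \in D_s$ for the other generator of the inverse system of $I$ (so that $I = 0:_R \langle \phi_1, \phi_2 \rangle$ and $I_j = 0:_R \phi_j$), the induced map for $I$ is
$$\Phi_i : S_i \to D_{s-i} \oplus D_{2s-1-i},$$
and $\Phi_i^{(2)}$ is the composition of $\Phi_i$ with projection onto the second summand.

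First, I would handle the range $i \geq s$. When $i > s$ the first summand vanishes and $\Phi_i^{(2)} = \Phi_i$; when $i = s$, a quick dimension count gives $\binom{s+2}{2} - \bigl(1 + \binom{s+1}{2}\bigr) = s > 0$, so the source of $\Phi_s$ strictly exceeds its target. Since $R/I$ is compressed, maximal rank of $\Phi_i$ in this range forces it to be surjective. Surjectivity is preserved under projection onto a direct summand, so $\Phi_i^{(2)}$ is surjective for all $i \geq s$.

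For the complementary range $i \leq s-1$, I would apply Proposition~\ref{proplol}(b) to the Gorenstein ideal $I_2 = 0:_R \phi_2$: it asserts that $\Phi_i^{(2)}$ is injective if and only if $\Phi_{2s-1-i}^{(2)}$ is surjective. Since $i \leq s-1$ gives $2s-1-i \geq s$, the previous step supplies the required surjectivity, so $\Phi_i^{(2)}$ is injective. A comparison of $\dim_k S_i$ with $\dim_k D_{2s-1-i}$ in this range shows injectivity is exactly the maximal-rank condition. Combining the two ranges, $\Phi_i^{(2)}$ has maximal rank for every $i$, and Observation~\ref{obs2} concludes that $R/I_2$ is compressed.

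There is no real obstacle beyond the dimension comparison at $i = s$ that pins down surjectivity (rather than injectivity) of $\Phi_s$; the proof is short because compressedness of $R/I$ gives surjectivity of $\Phi_i^{(2)}$ in the upper half, and the Gorenstein duality built into Proposition~\ref{proplol}(b) then automatically propagates this to injectivity in the lower half.
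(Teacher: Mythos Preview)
Your proof is correct and follows essentially the same line as the paper's argument. The paper compresses your two ranges into a single sentence by citing Observation~\ref{obs2} and Proposition~\ref{proplol} together (so that it suffices to check surjectivity of $\Phi_i^{(2)}$ only for $i\geq s$), whereas you spell out explicitly the dimension comparison at $i=s$ and the duality step for $i\leq s-1$; the underlying logic---surjectivity of $\Phi_i$ for $i\geq s$ from compressedness of $R/I$, then projection to the second factor, then Gorenstein duality for the lower half---is identical.
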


\begin{proof}
In view of Observation \ref{obs2} and Proposition \ref{proplol}, it suffices to show that the map $\Phi_i$ associated to $I_2$ is surjective for all $i \geq \lceil s-1/2 \rceil =s$. Assume that the inverse system associated to $I$ is minimally generated by $\phi_1$ and $\phi_2$ of degree $s$ and $2s-1$, respectively. Then $I_2 = 0:_{S(V)} \phi_2$.

By hypothesis, the map $f \mapsto (f \phi_1 , f \phi_2)$ is surjective for all $i \geq s$. Composing with the projection onto the second factor, $f \mapsto f \phi_2$ is also surjective for all $i \geq s$.
\end{proof}

\begin{prop}\label{trimmed}
Adopt Setup \ref{setup2}. There exists a minimal generating set $$(\phi_1 , \dots , \phi_{s+1} , \psi_1 , \dots , \psi_b)$$ for $I_2$ such that
$$I = (\phi_1 , \dots , \phi_s , \psi_1 , \dots , \psi_b) + R_+ \phi_{s+1}$$
where $\deg (\phi_i) = s$, $\deg (\psi_j) = s+1$, and the integer $b$ is obtained as in Proposition \ref{btab1}.
\end{prop}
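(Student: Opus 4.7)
The plan is to identify the minimal generators of $I_2$ using Propositions \ref{alscomp} and \ref{btab1}, and then to choose a basis of $(I_2)_s$ so that exactly one basis element lies outside $I = I_1 \cap I_2$. By Proposition \ref{alscomp}, $I_2$ defines a compressed Gorenstein ring with socle degree $2s-1$, so Proposition \ref{btab1} says that its minimal generators occur in degrees $s$ and $s+1$, with $s+1$ of them in degree $s$ and exactly $b$ in degree $s+1$; I label the degree $s+1$ generators $\psi_1, \dots, \psi_b$. Since the tipping point of $I_2$ is $\lceil (2s-1)/2\rceil = s$ by Proposition \ref{proplol}, one has $(I_2)_{s-1} = 0$, so every basis of the vector space $(I_2)_s$ descends to a minimal generating set for the degree $s$ part of $I_2$.

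The crux is to show that $I_s = I \cap (I_2)_s$ has codimension exactly one in $(I_2)_s$. Consider the $k$-linear evaluation map
$$\lambda : (I_2)_s \longrightarrow D_0 \cong k, \qquad f \mapsto f \cdot \phi_1,$$
using the $R$-module structure on $D(V^*)$. Since $I_1 = \ann(\phi_1)$, we have $(I_1)_s = \{f \in R_s : f \cdot \phi_1 = 0\}$, and so $I_s = (I_1)_s \cap (I_2)_s = \ker \lambda$. To show that $\lambda$ is nonzero (hence surjective), I would use the perfect pairing $R_s \times D_s \to k$ to identify $(I_2)_s^{\perp} = R_{s-1}\phi_2$: the inclusion $\supseteq$ is immediate from $f \cdot (g\phi_2) = (fg)\phi_2 = 0$ whenever $f \in (I_2)_s$ and $g \in R_{s-1}$, and equality follows from a rank--nullity count comparing the map $R_s \to D_{s-1}$, $f \mapsto f\phi_2$ with its transpose $R_{s-1} \to D_s$, $g \mapsto g\phi_2$. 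If $\lambda$ were zero, then $\phi_1 \in (I_2)_s^{\perp} = R_{s-1}\phi_2$, so $\phi_1$ lies in the $R$-submodule of $D(V^*)$ generated by $\phi_2$; but then $I = I_2$ and $\soc(R/I) = k(-2s+1)$, contradicting the hypothesis that $\soc(R/I) = k(-s) \oplus k(-2s+1)$.

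With $\dim I_s = \dim (I_2)_s - 1 = s$, I would pick a basis $\phi_1, \dots, \phi_s$ of $I_s$ and extend it by a single element $\phi_{s+1} \in (I_2)_s \setminus I$ to a basis of $(I_2)_s$; the set $\{\phi_1, \dots, \phi_{s+1}, \psi_1, \dots, \psi_b\}$ is then a minimal generating set for $I_2$. It remains to verify the ideal equality
$$I = (\phi_1, \dots, \phi_s, \psi_1, \dots, \psi_b) + R_+ \phi_{s+1} =: J.$$
For $J \subseteq I$: the $\phi_i$ with $i \leq s$ lie in $I$ by construction; each $\psi_j$ is in $I_2$ and also in $I_1$ since $(R/I_1)_{s+1} = 0$; and any element of $R_+\phi_{s+1}$ has degree at least $s+1$ and lies in $I_2$, so it is in $I_1 \cap I_2$. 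For $I \subseteq J$, I would argue degree by degree: in degree $s$, $I_s = \spn_k\{\phi_1, \dots, \phi_s\}$ by the choice of basis; in any degree $d \geq s+1$, one has $I_d = (I_2)_d$ since $R_d \subseteq I_1$, and $(I_2)_d$ is spanned by products $R_{d-s}\cdot \phi_i$ for $1 \leq i \leq s+1$ together with $R_{d-s-1}\cdot \psi_j$, all of which lie in $J$ (terms involving $\phi_{s+1}$ sit inside $R_+\phi_{s+1}$ because $d-s \geq 1$). The main obstacle is the duality identification $(I_2)_s^\perp = R_{s-1}\phi_2$ and the non-vanishing of $\lambda$ it yields; once these are in hand, the remainder is a routine bookkeeping exercise.
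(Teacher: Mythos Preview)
Your proof is correct and follows the same overall structure as the paper's: compute $\dim_k I_s$, choose a basis for $I_s$, extend it to a basis of $(I_2)_s$, then verify the ideal equality. The one substantive difference is in how you establish $\dim_k I_s = s$. The paper reads this off directly from the Hilbert-function formula in Definition~\ref{compdef} (since $R/I$ itself is compressed), while you instead argue via Macaulay duality that the evaluation functional $\lambda\colon (I_2)_s \to k$, $f\mapsto f\cdot\phi_1$, is nonzero with kernel exactly $I_s$, so that $I_s$ has codimension one in the $(s+1)$-dimensional space $(I_2)_s$. Your route is slightly longer but more structural: it makes explicit that the codimension-one statement fails precisely when $\phi_1$ already lies in the $R$-submodule generated by $\phi_2$, which is ruled out by the socle hypothesis. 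The paper's Hilbert-function count is quicker but hides this. Your degree-by-degree verification of the final ideal equality is also more explicit than the paper's, which simply invokes the fact that the minimal generators of a compressed algebra are concentrated in two consecutive degrees.
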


\begin{proof}
By the definition of a compressed ring,
\begingroup\allowdisplaybreaks
\begin{align*}
    \dim I_s &=\dim R_s - \min \{ \dim_k R_s , \dim_k R_0 + \dim_k R_{s-1} \} \\
    &= (s+2)(s+1)/2 - 1 - s(s+1)/2 \\
    &= s \\
\end{align*}
\endgroup
Choose a basis $\{ \phi_1 , \dots , \phi_s \}$ for $(I_1 \cap I_2)_s$. By Proposition \ref{alscomp}, $I_2$ defines a compressed ring, whence $\dim_k (I_2)_s = s+1$. Extend $\{\phi_1 , \dots , \phi_s \}$ to a basis $\{ \phi_1 , \dots , \phi_{s+1} \}$ of $(I_2)_s$. 

Observe that $(I_1 \cap I_2)_{s+1} = (I_2)_{s+1}$ by a dimension count. By Proposition \ref{btab1}, there exist $b$ linearly independent $s+1$-forms $\psi_1 , \dots  \psi_b$ such that 
$$(I_2)_{s+1} = (R_+ I_2)_{s+1} + \Span_k \{ \psi_1 , \dots , \psi_b \}.$$
Since $I$ defines a compressed ring, the degrees of its minimal generators are concentrated in $2$ consecutive degrees. This means that
$$I = (\phi_1 , \dots , \phi_s , \psi_1 , \dots , \psi_b) + R_+ \phi_{s+1}.$$
\end{proof}

\begin{cor}\label{evens}
Adopt Setup \ref{setup2}. Assume furthermore that the Betti table of $I_2$ is given by Proposition \ref{btab3}. If $s$ is even, then there exists a minimal generating set $\phi_1 , \dots , \phi_{s+1}$ for $I_2$ such that
$$I = (\phi_1 , \dots , \phi_s) + R_+ \phi_{s+1}$$
and this is a minimal generating set for $I$. $\qquad \square$
\end{cor}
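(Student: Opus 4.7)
The plan is to extract the value $b = 0$ from Proposition~\ref{btab3}, feed it into Proposition~\ref{trimmed}, and then verify minimality by a single dimension count in degree $s+1$.

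First I would observe that in the even case the Betti table in Proposition~\ref{btab3} has the integer $b$ of Proposition~\ref{btab1} equal to zero (no entries sit in the middle rows). Substituting $b = 0$ into Proposition~\ref{trimmed} immediately produces a minimal generating set $\phi_1, \dots, \phi_{s+1}$ for $I_2$ satisfying
$$I = (\phi_1, \dots, \phi_s) + R_+ \phi_{s+1},$$
which establishes the displayed expression in the statement.

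Next, to prove that this generating set is minimal for $I$, I would count dimensions in degree $s+1$. Reading the Hilbert function of $I_2$ off its resolution in Proposition~\ref{btab3}, one gets $\dim_k (I_2)_{s+1} = 3(s+1)$, since $I_2$ has neither new generators nor linear syzygies in degree $s+1$. The proof of Proposition~\ref{trimmed} already records that $(I_1 \cap I_2)_{s+1} = (I_2)_{s+1}$, and combined with $I_2$ having no new generators in degree $s+1$ this yields
$$(I)_{s+1} \ = \ R_1 \cdot \Span_k(\phi_1, \dots, \phi_s) + R_1 \cdot \phi_{s+1}.$$
Since $\dim_k (I)_{s+1} = 3(s+1)$ while the two summands on the right have dimensions at most $3s$ and $3$ respectively, both bounds are forced to be sharp and the two summands must meet trivially. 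Because $I$ has initial degree $s$ with $(I)_s = \Span_k(\phi_1, \dots, \phi_s)$, we have $(R_+ I)_{s+1} = R_1 \cdot \Span_k(\phi_1, \dots, \phi_s)$, and hence $x\phi_{s+1}, y\phi_{s+1}, z\phi_{s+1}$ are linearly independent modulo $(R_+ I)_{s+1}$. Together with the linearly independent forms $\phi_1, \dots, \phi_s$ in the initial degree $s$, this gives $s+3$ minimal generators of $I$.

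No individual step is intricate; the whole argument is a forced-equality dimension count, and the only point requiring attention is checking that $\dim_k (I_2)_{s+1}$ computed from the resolution matches the sum of the maximal possible dimensions of $R_1 \cdot \Span_k(\phi_1, \dots, \phi_s)$ and $R_1 \cdot \phi_{s+1}$, which is precisely what makes the vanishing $b = 0$ from Proposition~\ref{btab3} do the work.
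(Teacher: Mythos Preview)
Your argument is correct. The paper itself offers no proof at all for this corollary (it is marked with a bare $\square$), presumably regarding the displayed expression as an immediate specialization of Proposition~\ref{trimmed} with $b=0$; your first paragraph matches this exactly. For the minimality claim, the paper says nothing here and only later recovers $\mu(I)=s+3$ in Corollary~\ref{evens2} by invoking the resolution of Theorem~\ref{12.6} and observing that, since the presenting matrix of $I_2$ has quadratic entries when $b=0$, the map $q$ of Theorem~\ref{12.6} has linear entries and hence $q\otimes k=0$. Your direct dimension count in degree $s+1$ is a cleaner, self-contained alternative that avoids forward reference to the resolution machinery; it is a genuine improvement over the paper's omission, and the forced-equality step you highlight is exactly the point where $b=0$ is used.
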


\section{A Resolution for Certain Types of Ideals}\label{theres}

In view of Proposition \ref{trimmed}, our goal is to produce a resolution of ideals of the form $(\phi_1 , \dots , \phi_s , \psi_1 , \dots , \psi_b) + R_+ \phi_{s+1}$. We build the resolution in a more general setting, then specialize to the relevant case.

\begin{setup}\label{SU12}Let $R$ be a ring, $n$ be a positive integer, and $U$ and $V$ be free $R$-modules of rank $3$ and $2n+1$, respectively.
Suppose that $v_0\in V$ generates a free summand of rank 1. Let $w_0$ be an element of $V^*$ with $w_0(v_0)=1$ and let $V'$ be the kernel of $w_0$. Observe that
 \begin{equation}\label{12decomp}V=Rv_0\p V'.\end{equation} Let
\begin{equation}\label{phi}\phi=\phi'+(v_0\w v_0')\end{equation} be an element of $\bigwedge^2 V$, with $\phi'\in \bigwedge^2V'$ and $v_0'\in V'$, and let ${x:U\to R}$ be an $R$-module homomorphism.
Assume
\begin{enumerate}[\rm (a)]
\item\label{SU12.a} the ideal $\im (x:U\to R)$  has grade three,
\item\label{SU12.b} the ideal $\im \big(\phi^{(n)}:\bigwedge ^{2n}V^* \to R\big)$ is a grade three Gorenstein ideal,
\item\label{SU12.c} the ideal $\im(v_0':V^*\to R)$ is contained in the ideal $\im (x:U\to R)$, and
\item\label{SU12.d} the element $v_0\w \phi^{(n)}$ of $\bigwedge^{2n+1}V$ is regular.
\end{enumerate}
Let $\mcI$ be the ideal $$\ts \im \Big((V'\w \phi^{(n)}):\bigwedge ^{2n+1}V^* \to R\Big) +\im(x:U\to R)\cdot \im \Big(v_0\w \phi^{(n)}: \bigwedge ^{2n+1}V^* \to R\Big)$$of $R$.
\end{setup}

Let $K$ represent the ideal $\im \big(\phi^{(n)}:\bigwedge ^{2n}V^* \to R\big)$ of Setup \ref{SU12}.\ref{SU12.b}. This ideal has $2n+1$ generators: one for each element in a basis for $\bigwedge^{2n}V^*$. We have partitioned this generating set into two subsets and we consider the corresponding two subideals $K_0$ and $K'$ of $K$. The ideal
$$\ts K_0=\im \Big(v_0\w \phi^{(n)}: \bigwedge ^{2n+1}V^* \to R\Big)$$
 is principal and its generator is the Pfaffian (of the alternating matrix which corresponds to $\phi$) that involves all of the basis vectors  of $V'$.
The ideal
$$\ts K'=\im\Big((V'\w \phi^{(n)}):\bigwedge ^{2n+1}V^* \to R\Big)$$ has $2n$ generators and each of these generators is a Pfaffian (of the alternating matrix which corresponds to $\phi$) that involves $v_0$ together with $2n-1$ basis vectors from $V'$. Of course, $\mcI=K'+(\im x)\cdot K_0$

\begin{chunk}\label{12.4} Define $\proj: V\to V'$ to be the projection induced by the decomposition
(\ref{12decomp}); in other words, $\proj(v)=v-w_0(v)\cdot v_0$, for $v\in V$.
\end{chunk}

\begin{observation}\label{obs12.1} Adopt the terminology and hypotheses of
{\rm\ref{SU12}}. The following statements hold:
\begin{enumerate}
\item\label{obs12.1.y}$w_0(\phi)=v_0'\in V'$,
\item\label{obs12.1.z} $\phi^{(n)}={\phi'}^{(n)}+{\phi'}^{(n-1)}\w v_0\w v_0'$,
\item \label{obs12.1.b}there exists  an $R$-module homomorphism $q:V\to U$  for which the diagram
$$\xymatrix{&V^*\ar@{-->}[ld]_{\exists q}\ar[d]^{v_0'}\\U\ar[r]^x&\im x\ar[r]&0}$$ commutes.
\item \label{obs12.1.c}there exists an $R$-module homorphism $B: \bigwedge^{2n+1}V^*\to \bigwedge^2 U$
 for which the diagram
$$\xymatrix{
\bigwedge^{2n+1}V^*\ar[rr]
^{\phi^{(n)}
(\underline{\phantom{X}})}\ar[d]^{B}
&&V^*\ar[d]^{q}\\
\bigwedge^2U\ar[rr]^{x}&&
U
}$$commutes, and
\item\label{obs12.1.a}
$
K':_RK_0\subseteq \im(x: U\to R)$.
\end{enumerate}
\end{observation}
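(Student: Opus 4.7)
The plan is to dispatch parts (\ref{obs12.1.y})--(\ref{obs12.1.c}) as direct consequences of the decomposition $V = Rv_0\oplus V'$ together with standard lifting arguments, and then to deduce (\ref{obs12.1.a}) from the Buchsbaum--Eisenbud structure theorem applied to the block form of $\phi$.

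For (\ref{obs12.1.y}), since $w_0$ vanishes on $V'$, contracting $\phi=\phi'+v_0\wedge v_0'$ against $w_0$ annihilates $\phi'$ and leaves $w_0(v_0)v_0'-w_0(v_0')v_0 = v_0'$. For (\ref{obs12.1.z}), the identity $(v_0\wedge v_0')\wedge(v_0\wedge v_0')=0$ truncates the binomial expansion of $\phi^n$ in $\bigwedge^{\bullet}V$ after the linear term, and dividing by $n!$ yields the stated formula. For (\ref{obs12.1.b}), freeness of $U$ combined with $\im v_0'\subseteq \im x$ from hypothesis (\ref{SU12.c}) produces a lift $q\colon V^*\to U$ of $v_0'$ through $x$.

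For (\ref{obs12.1.c}), hypothesis (\ref{SU12.a}) together with $\rank U = 3$ makes the Koszul complex $\bigwedge^{\bullet}U$ on $x$ acyclic, so a map into $U$ lifts to $\bigwedge^2 U$ if and only if its composite with $x\colon U\to R$ vanishes. Using (\ref{obs12.1.b}), the composite $x\circ q\circ \phi^{(n)}$ equals $v_0'\circ \phi^{(n)}$, which sends $\alpha \mapsto \alpha(v_0'\wedge \phi^{(n)})$. Substituting (\ref{obs12.1.z}), one has $v_0'\wedge \phi^{(n)} = v_0'\wedge {\phi'}^{(n)} + v_0'\wedge {\phi'}^{(n-1)}\wedge v_0\wedge v_0'$; the second summand contains $v_0'\wedge v_0'=0$, and the first lies in $\bigwedge^{2n+1}V' = 0$ since $V'$ has rank $2n$. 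Hence the composite vanishes and the desired $B$ exists.

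For (\ref{obs12.1.a}), choose a basis $\{v_0,e_1,\dots,e_{2n}\}$ of $V$ with $\{e_i\}$ a basis of $V'$, and let $M$ be the alternating matrix of $\phi$ in this basis. By (\ref{obs12.1.y}), the $v_0$-row of $M$ consists (up to sign) of the coordinates of $v_0'$ in the $e_i$'s, and the $2n+1$ generators of $K$ are (up to sign) the submaximal Pfaffians $P_0,P_1,\dots,P_{2n}$ of $M$, where $K_0 = (P_0)$ corresponds to deleting the $v_0$-row/column. Hypothesis (\ref{SU12.d}) makes $P_0$ a nonzerodivisor, so $K':_R K_0 = \{r\in R : rP_0\in K'\}$. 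Any such relation $rP_0 = \sum a_i P_i$ is a first syzygy on $(P_0,P_1,\dots,P_{2n})$, and by the Buchsbaum--Eisenbud structure theorem for grade $3$ Gorenstein ideals these first syzygies are precisely the image of $M$. Extracting the $v_0$-entry of the column of $M$ producing this syzygy writes $r$ as an $R$-linear combination of the coordinates of $v_0'$, placing $r\in \im(v_0')\subseteq \im x$. The main obstacle is (\ref{obs12.1.a}); parts (\ref{obs12.1.y})--(\ref{obs12.1.c}) reduce to exterior-algebra bookkeeping and projective lifting, whereas (\ref{obs12.1.a}) requires both Buchsbaum--Eisenbud and the explicit block structure of $M$ afforded by the decomposition (\ref{phi}).
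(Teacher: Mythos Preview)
Your proof is correct and follows essentially the same approach as the paper. The paper handles part (\ref{obs12.1.a}) coordinate-free---writing the syzygy as $v'+rv_0=w(\phi)$ for some $w\in V^*$ via exactness of the Buchsbaum--Eisenbud complex and then reading off the $v_0$-component---whereas you phrase the same step in terms of the matrix $M$, but the underlying argument is identical.
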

\begin{proof} Assertions (\ref{obs12.1.y}), (\ref{obs12.1.z}), and (\ref{obs12.1.b}) are evident.

To prove (\ref{obs12.1.c}), observe that hypothesis \ref{SU12}.\ref{SU12.a} ensures that \begin{equation}\label{12assum-May6}\ts 0\to \bigwedge^3 U\xrightarrow{x}
\bigwedge^2 U\xrightarrow{x}
  U\xrightarrow{x} R\end{equation} is an acyclic complex of free $R$-modules.
One computes:
\begin{align*}&(x\circ q)(\phi^{(n)}(w_{2n+1}))=v_0'((\phi^{(n)}(w_{2n+1}))
=(v_0'\w \phi^{(n)})(w_{2n+1})\\
=& (v_0'\w [{\phi'}^{(n)}+{\phi'}^{(n-1)}\w v_0\w v_0'])(w_{2n+1})
=(v_0'\w{\phi'}^{(n)})(w_{2n+1})=0,\end{align*} for each $w_{2n+1}\in \bigwedge^{2n+1}V^*$.
The first equality is (\ref{obs12.1.b}), the second equality  is the fact that $\bigwedge^\bullet V^*$ is a $\bigwedge^\bullet V$-module, the third  equality is (\ref{obs12.1.z}), and last equality holds because $v_0'\w {\phi'}^{(n)}$ is in $\bigwedge^{2n+1}{V'}=0$.) The assertion now follows from the acyclicity of (\ref{12assum-May6}).

\medskip\noindent (\ref{obs12.1.a})
The Buchsbaum-Eisenbud theorem \cite[Cor.~2.6, Thm.~3.1]{BE} guarantees that
{\begin{equation}\xymatrix{
&0\ar[r]&\bigwedge^{2n+1}V^*\ar[rr]
^{\phi^{(n) }
(\underline{\phantom{X}})}
&&V^*\ar[rr]^{
(\underline{\phantom{X}})
(\phi)
}&&V\ar[rr]^{(\underline{\phantom{X}})\w
\phi^{(n)}}&&\bigwedge^{2n+1}V}\label{BE}\end{equation}}
is a resolution of $R/K$. If $r\in R$ and  $rK_0\subseteq K'$, then there exists $v'\in V'$ with
$$v'\w \phi^{(n)} +rv_0 \w \phi^{(n)}=0.$$ The exactness of (\ref{BE}) guarantees that there exists an element $w\in V^*$ with $$w(\phi)=v'+rv_0.$$
Apply (\ref{phi}) to see that $$w(\phi') +w(v_0)\cdot v_0'-w(v_0')\cdot v_0=v'+rv_0.$$It follows that $[r+w(v_0')]\cdot v_0\in V'$; hence $r=v_0'(-w)\in \im v_0'$. The hypothesis \ref{SU12}.\ref{SU12.c} ensures that $\im v_0'\subseteq \im(x)$.
\end{proof}

\begin{theorem}\label{12.6} Adopt the terminology  and hypotheses of {\rm\ref{SU12}, \ref{12.4},} and {\rm\ref{obs12.1}}. Then the maps and modules
\begin{equation}\label{12.6.1}0\to \begin{matrix} \bigwedge^{2n+1}V^*\\\p\\\bigwedge^3 U\end{matrix}
\xrightarrow{\ \ d_3\ \ } \begin{matrix} V^*\\\p\\ \bigwedge^2 U\end{matrix}
\xrightarrow{\ \ d_2\ \ }\begin{matrix}V'\\\p\\U \end{matrix}
\xrightarrow{\ \ d_1\ \ } {\ts\bigwedge^{2n+1}V}\end{equation}
form a resolution of $R/\mcI$ by free $R$-modules,
where
$$d_3\begin{pmatrix} w_{2n+1}\\u_3\end{pmatrix}= \begin{pmatrix}\phi^{(n)}(w_{2n+1})\\B(w_{2n+1})+x(u_3)\end{pmatrix},$$
$$d_2\begin{pmatrix} w_{1}\\u_2\end{pmatrix}= \begin{pmatrix} \proj(w_1(\phi))\\-q(w_1)+x(u_2)\end{pmatrix},$$
and
$$d_1\begin{pmatrix} v'\\u_1\end{pmatrix} =(v'+x(u_1)\cdot v_0)\w \phi^{(n)},$$
for $w_{2n+1}\in \bigwedge^{2n+1}V^*$, $u_3\in \bigwedge^3U$, $w_1\in V^*$, $u_2\in \bigwedge^2U$, $v'\in V'$, and $u_1\in U$.
\end{theorem}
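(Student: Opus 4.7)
The plan is to verify that the sequence \eqref{12.6.1} is a complex by direct computation, identify $\im d_1 = \mcI$, and then establish exactness via the Buchsbaum-Eisenbud acyclicity criterion.

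First, I will verify the complex property. For $d_1 \circ d_2$, the key identities are $x \circ x = 0$ on the Koszul complex \eqref{12assum-May6}, together with the computation $w_0(w_1(\phi)) = -w_1(v_0')$ obtained from the decomposition \eqref{phi} and the relations $w_0(v_0)=1$, $w_0(v_0')=0$. Combined with the factorization $x \circ q = v_0'$ from Observation \ref{obs12.1}, the expression $d_1 d_2(w_1,u_2)$ telescopes to $w_1(\phi) \wedge \phi^{(n)}$, which vanishes by \eqref{BE}. For $d_2 \circ d_3$, the $V'$-component vanishes because $(\phi^{(n)}(w_{2n+1}))(\phi) = 0$ in \eqref{BE}, and the $U$-component reduces to $-q(\phi^{(n)}(w_{2n+1})) + x(B(w_{2n+1}))$, which vanishes by the commutative diagram defining $B$ in Observation \ref{obs12.1} (using $x \circ x = 0$).

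Next, the identification $\im d_1 = \mcI$ is immediate: the formula $d_1(v',u_1) = v' \wedge \phi^{(n)} + x(u_1)\,(v_0 \wedge \phi^{(n)})$ shows that letting $v'$ and $u_1$ range produces all of $K' + (\im x)\cdot K_0 = \mcI$. For acyclicity, I will invoke the Buchsbaum-Eisenbud criterion. The ranks of $F_3, F_2, F_1, F_0$ are $2, 2n+4, 2n+3, 1$, forcing $(\rank d_1, \rank d_2, \rank d_3) = (1, 2n+2, 2)$; these are compatible with the ranks of the two acyclic complexes \eqref{BE} and \eqref{12assum-May6} from which \eqref{12.6.1} is assembled. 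The required grade inequalities are $\grade I_1(d_1) \geq 1$, $\grade I_{2n+2}(d_2) \geq 2$, and $\grade I_2(d_3) \geq 3$. The first holds because $\mcI$ contains $(\im x)\cdot K_0$, and both $\im x$ (by \ref{SU12}.\ref{SU12.a}) and $K_0$ (by \ref{SU12}.\ref{SU12.d}) contain regular elements. The third holds because $d_3$ is block lower-triangular in the decompositions $F_3 = \bigwedge^{2n+1} V^* \oplus \bigwedge^3 U$ and $F_2 = V^* \oplus \bigwedge^2 U$ (the block $\bigwedge^3 U \to V^*$ is zero), so its $2\times 2$ minors include every product of an entry of $\phi^{(n)}$ with an entry of $x$; hence $I_2(d_3) \supseteq K \cdot \im x$, which has grade $3$ by hypotheses \ref{SU12}.\ref{SU12.a} and \ref{SU12}.\ref{SU12.b}.

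The main obstacle will be the middle grade inequality $\grade I_{2n+2}(d_2) \geq 2$. Writing $d_2$ as the block matrix $\bigl(\begin{smallmatrix} \proj \circ \phi & 0 \\ -q & x \end{smallmatrix}\bigr)$, the $(2n+2) \times (2n+2)$ minors interlace the Pfaffian data of $\phi$ with the Koszul data of $x$. A natural strategy is to exhibit enough block minors of the form (a submaximal minor of $\proj \circ \phi$) times (a nontrivial minor of $x$) to produce an ideal of grade $2$ sitting inside $I_{2n+2}(d_2)$, with the colon containment $K' :_R K_0 \subseteq \im x$ from Observation \ref{obs12.1} playing a role in the depth bookkeeping to close the grade count.
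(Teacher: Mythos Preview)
Your approach via the Buchsbaum--Eisenbud acyclicity criterion is genuinely different from the paper's, which instead recognizes \eqref{12.6.1} as the mapping cone of the chain map in \eqref{12.6.2} and reads off acyclicity from the long exact sequence of homology. The paper reduces to three short facts: the top row of \eqref{12.6.2} resolves $K'/(K'\cap K_0)$, the bottom row resolves $R/(\im x)\cdot K_0$, and the induced map $K'/(K'\cap K_0)\to R/(\im x)\cdot K_0$ is injective (the last uses the colon containment $K':_R K_0\subseteq \im x$). This sidesteps any Fitting-ideal computation.

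Your complex check and the grade bounds for $d_1$ and $d_3$ are fine. The real problem is the step you flag yourself: the inequality $\grade I_{2n+2}(d_2)\geq 2$. Your proposed ``block minors'' strategy does not close this. Taking all $2n$ rows from $V'$ and two rows from $U$, together with $2n$ columns from $V^*$ and two columns from $\bigwedge^2 U$, gives a product (maximal minor of $\proj\circ\phi$) $\cdot$ ($2\times 2$ minor of the Koszul map). But the $2n\times 2n$ minors of $\proj\circ\phi$ are exactly $\pm\Pf_{i_0}\cdot\Pf_j$ (where $i_0$ indexes the deleted row $v_0$), so they generate $K_0\cdot K$; since $K_0\subseteq K$, the radical of $K_0\cdot K$ is $\sqrt{K_0}$, which has grade $1$. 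Multiplying by $(\im x)^2$ gives an ideal with radical $\sqrt{K_0}\cap\sqrt{\im x}$, whose grade is $\min(1,3)=1$. So the block minors alone only yield $\grade\geq 1$, and you would need to exhibit further $(2n+2)$-minors (necessarily mixing the $-q$ block nontrivially) to push the grade to $2$. You have not done this, and it is not clear the colon containment $K':_R K_0\subseteq\im x$ enters in any useful way here. By contrast, that containment is exactly what makes the injectivity step in the paper's mapping-cone argument go through immediately. I would abandon the Fitting-ideal route and use the mapping cone.
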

\begin{proof}The homomorphisms (\ref{12.6.1}) form the mapping cone of
\begin{equation}\label{12.6.2}\xymatrix{
&0\ar[rr]&&\bigwedge^{2n+1}V^*\ar[rr]
^{\phi^{(n)}
(\underline{\phantom{X}})}\ar[d]^{B}
&&V^*\ar[rr]^{\proj\Big(
(\underline{\phantom{X}})
(\phi)\Big)
}\ar[d]^{q}&&V'\ar[d]^{\underline{\phantom{X}} \w\phi^{(n)}}\\
0\ar[r]&\bigwedge^3U\ar[rr]^{x}&&
\bigwedge^2U\ar[rr]^{x}&&
U\ar[rr]^{x(\underline{\phantom{X}})\cdot v_0\w \phi^{(n)}}&&\bigwedge^{2n+1}V.
}\end{equation}
The  rows are complexes by (\ref{BE}) and (\ref{12assum-May6}). The left most square commutes by \ref{obs12.1}.(\ref{obs12.1.c}). To see that the right most square commutes, let $w\in V^*$. The clock-wise path sends $w$ to
\begin{align*}[w(\phi)-w_0(w(\phi))\cdot v_0]\w \phi^{(n)}
&=w(\phi)\w \phi^{(n)}+[w_0(\phi)](w)\cdot v_0\w \phi^{(n)}&&\text{by \ref{12.4}}\\
&=w(\phi^{(n+1)})+ v_0'(w)\cdot v_0\w \phi^{(n)}&&\text{by \ref{obs12.1}.(\ref{obs12.1.y})}\\
&=v_0'(w)\cdot v_0\w \phi^{(n)}.&&\end{align*}
The counter-clock-wise path sends $w$ to $$(x\circ q)(w)\cdot v_0\w \phi^{(n)}=v_0'(w)\cdot v_0\w \phi^{(n)}$$ by \ref{obs12.1}.(\ref{obs12.1.b}).

Apply the long exact sequence of homology associated to a mapping cone to see
 that the complex (\ref{12.6.1}) is acyclic.
It suffices to show that
\begin{enumerate}
\item\label{12.6.a} the top row of (\ref{12.6.2}) is a resolution of $K'/(K'\cap K_0)$,
\item\label{12.6.b} the bottom row of (\ref{12.6.2}) is a resolution of $R/(\im x)\cdot K_0$, and
\item\label{12.6.c}  the induced map on zero-th homology
$$\frac{K'}{K'\cap K_0} \to \frac {R}{(\im x) \cdot K_0}$$is an injection. This induced map is the following composition of
 natural maps
$$\xymatrix{ \frac{K'}{K'\cap K_0}\ar@{^(->}[r]&\frac{R}{K'\cap K_0}\ar@{->>}[r]&\frac{R}{(\im x)\cdot K_0}}.$$
(Recall from \ref{obs12.1}.(\ref{obs12.1.a}) that $(K'\cap K_0)\subseteq (\im x)\cdot K_0$.)
\end{enumerate}

\medskip\noindent {\it Proof of {\rm(\ref{12.6.a})}.} The augmented top row of (\ref{12.6.2}) is the bottom row of the following short exact sequence of complexes:
$$
\xymatrix{ &&0\ar[r]&Rv_0\ar[r]^{(\underline{\phantom{X}})\w
\phi^{(n)}}\ar@{^(->}[d]&K_0\ar@{^(->}[d]\ar[r]&0\\
0\ar[r]&\bigwedge^{2n+1}V^*\ar[r]\ar[d]^{=}&V^*\ar[r]\ar[d]^{=}&V\ar[r]^{(\underline{\phantom{X}})\w
\phi^{(n)}}\ar@{->>}[d]&K\ar[r]\ar@{->>}[d]&0\\
0\ar[r]&\bigwedge^{2n+1}V^*\ar[r]&V^*\ar[r]&V'\ar[r]^{(\underline{\phantom{X}})\w
\phi^{(n)}}&\frac{K'}{K'\cap K_0}\ar[r]&0
.}$$ The middle complex is exact because of \ref{BE}; the top complex is exact because of \ref{SU12}.\ref{SU12.d}.

Assertion (\ref{12.6.b}) is a consequence of (\ref{12assum-May6}) and \ref{SU12}.(\ref{SU12.d}). Assertion (\ref{12.6.c}) is immediate because $K'\cap (\im x)\cdot K_0\subseteq K'\cap K_0$. \end{proof}

\begin{remark}\label{translate}
Adopt the notation and hypotheses of Setup \ref{setup2}. Observe that in Setup \ref{SU12}, the map $x: U \to R$ is the first Koszul differential for the ideal $R_+$, so that $\im (x : U \to R) = R_+$. Similarly, choose $\phi$ such that
$$\im ( \phi^{(n)} : \bigwedge^{2n} V^* \to R ) = I_2.$$
The hypothesis \ref{SU12}.\ref{SU12.c} is simply the statement that $I_2$ is homogeneous and generated in positive degree. Similarly, since we are working over a polynomial ring (which is a domain), hypothesis \ref{SU12}.\ref{SU12.d} is trivially satisfied.

By Proposition \ref{trimmed}, there exists a minimal generating set $$(\phi_1 , \dots , \phi_{s+1} , \psi_1 , \dots , \psi_b)$$ for $I_2$ such that
$$I = (\phi_1 , \dots , \phi_s , \psi_1 , \dots , \psi_b) + R_+ \phi_{s+1}.$$
Choose $v_0$ to be the direct summand corresponding to the minimal generator $\phi_{s+1}$ of $I_2$; then, in the notation of Setup \ref{SU12}, 
$$\cat{I} = (\phi_1 , \dots , \phi_s , \psi_1 , \dots , \psi_b) + R_+ \phi_{s+1},$$
whence Theorem \ref{12.6} provides a resolution of the ideal $I$ as in Setup \ref{setup2}.
\end{remark}

\begin{cor}\label{numgenss}
In the notation and hypotheses of Theorem \ref{12.6}, assume that $(R,\m , k)$ is a local ring or $R$ is a standard graded polynomial ring over a field $k$. Then
$$\mu ( \cat{I} ) = \mu (K) + 2 - \rank_k (q \otimes k)$$
\end{cor}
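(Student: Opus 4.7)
The plan is to compute $\mu(\cat{I})$ via the long exact sequence of $\tor^R(-,k)$ attached to the short exact sequence $0 \to \cat{I} \to K \to K/\cat{I} \to 0$. The first task is to verify $K/\cat{I} \cong k$ as an $R$-module: since $\cat{I} \supseteq K'$, we have $K = \cat{I} + K_0$, so $K/\cat{I} \cong K_0/(K_0 \cap \cat{I})$. The modular law combined with Observation \ref{obs12.1}.(\ref{obs12.1.a}) (which gives $K' \cap K_0 \subseteq (\im x)K_0$) yields $K_0 \cap \cat{I} = (\im x)K_0 = \fm K_0$. Since $K_0$ is the principal ideal generated by the regular element $v_0 \wedge \phi^{(n)}$ (hypothesis \ref{SU12}.\ref{SU12.d}), one obtains $K/\cat{I} \cong K_0/\fm K_0 \cong k$.

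Applying $-\otimes_R k$ to this short exact sequence and using that the embedding dimension of $R$ is $3$, so $\tor_1^R(k,k) \cong k^3$, yields the exact sequence
$$\tor_1^R(K,k) \xrightarrow{\alpha} k^3 \xrightarrow{\delta} \cat{I}\otimes_R k \to K\otimes_R k \to k \to 0.$$
The last surjection has one-dimensional image, so the image of $\cat{I}\otimes k \to K\otimes k$ has dimension $\mu(K) - 1$. By exactness, $\dim \im(\delta) = \mu(\cat{I}) - \mu(K) + 1 = 3 - \dim \ker \delta$, and $\ker \delta = \im\alpha$, whence
$$\mu(\cat{I}) = \mu(K) + 2 - \dim_k \im\alpha.$$

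The remaining task is to show $\dim_k \im\alpha = \rank_k(q\otimes k)$. Using the (minimal) Buchsbaum--Eisenbud resolution $0 \to R \to V^* \xrightarrow{\phi} V \to K \to 0$ and the Koszul resolution of $k$, I construct a chain-level lift of $K \twoheadrightarrow k$: define $f_0 : V \to R$ by $v_0 \mapsto 1$ and $e_i \mapsto 0$ for a basis $\{e_i\}$ of $V'$, a valid lift because the class of $v_0 \wedge \phi^{(n)}$ generates $K/\cat{I}$ while the remaining Pfaffians lie in $K' \subseteq \cat{I}$. Compatibility with differentials requires $f_1 : V^* \to U$ satisfying $x \circ f_1 = f_0 \circ \phi$; using the decomposition $\phi = \phi' + v_0 \wedge v_0'$, one computes $f_0 \circ \phi = -v_0'$, so Observation \ref{obs12.1}.(\ref{obs12.1.b}) allows the choice $f_1 = -q$. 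Tensoring with $k$ and invoking minimality of both resolutions (all differentials zero modulo $\fm$), the induced map $\alpha$ is identified with $-q\otimes k$ under the canonical isomorphisms $\tor_1^R(K,k) \cong V^*\otimes k$ and $\tor_1^R(k,k) \cong k^3$. Hence $\dim_k \im\alpha = \rank_k(q\otimes k)$, giving $\mu(\cat{I}) = \mu(K) + 2 - \rank_k(q\otimes k)$. The main technical step is the chain-level construction of the lift, whose commutativity rests on Observation \ref{obs12.1}.(\ref{obs12.1.b}).
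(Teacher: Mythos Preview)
Your argument is correct, but it takes a genuinely different route from the paper's. The paper's proof is almost a one-liner: it tensors the resolution $\cat{F}$ of $R/\cat{I}$ from Theorem \ref{12.6} with $k$, observes that $d_1\otimes k=0$ and that the only block of $d_2\otimes k$ not automatically zero (by minimality of the Buchsbaum--Eisenbud and Koszul complexes) is $q\otimes k$, and reads off $\mu(\cat{I})=\dim_k H_1(\cat{F}\otimes k)=\dim_k(V'\oplus U)\otimes k-\rank_k(q\otimes k)=\mu(K)+2-\rank_k(q\otimes k)$. Your approach instead uses the short exact sequence $0\to\cat{I}\to K\to K/\cat{I}\to 0$, the long exact sequence of $\tor$, and a chain-level lift identifying the map $\tor_1^R(K,k)\to\tor_1^R(K/\cat{I},k)$ with $-q\otimes k$. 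The paper's method is shorter because the resolution is already in hand; your method is more intrinsic and would succeed even without knowing the explicit free resolution of $R/\cat{I}$, since it only requires the (minimal) resolutions of $K$ and of $K/\cat{I}$.

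One small correction: in the stated generality nothing forces $\im x=\m$, so $K/\cat{I}\cong K_0/(\im x)K_0\cong R/\im x$ rather than $k$, and the claim about ``embedding dimension $3$'' is not part of the hypotheses. This does not damage your argument: since $\im x\subseteq\m$, one still has $(R/\im x)\otimes_R k\cong k$ and, because the Koszul complex on $x$ is minimal, $\tor_1^R(R/\im x,k)\cong U\otimes k\cong k^3$. With that adjustment your dimension count and chain-level lift (with $f_1=-q$, using $w_0(w(\phi))=-v_0'(w)$ and Observation \ref{obs12.1}(\ref{obs12.1.b})) go through verbatim.
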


\begin{proof}
Observe that $d_1 \otimes k = 0$ by construction. Let $\cat{F}$ denote the complex of Theorem \ref{12.6}. Then,
$$ H_1 (\cat{F} \otimes k ) = \frac{(V' \oplus U) \otimes k}{\im (d_2 \otimes k)}.$$
The only nonzero entries of $d_2 \otimes k$ come from $q \otimes k$, since the other differentials in the mapping cone are built from minimal resolutions. Thus
$$\rank_k (d_2 \otimes k) = \rank_k ( q \otimes k)$$
and
\begin{align*}
    \dim_k H_1 ( \cat{F} \otimes k) &= \dim_k (V' \oplus U)\otimes k - \rank_k (q \otimes k) \\
    &= \mu(K) + 2 - \rank_k (q \otimes k). \\
\end{align*}
To conclude, recall that $\dim_k H_1 ( \cat{F} \otimes k) = \dim_k \tor_1^R (R/\cat{I} , k) = \mu(\cat{I})$.
\end{proof}

\begin{cor}\label{evens2}
Adopt Setup \ref{setup2}, where $s$ is even. Assume $I_2$ has Betti table given by Proposition \ref{btab3}. Under the identifications of Remark \ref{translate}, $I$ has homogeneous minimal free resolution of the form
$$0 \to \begin{matrix} R(-2s-2) \\ \p \\ R(-s-3) \end{matrix} \to R(-s-2)^{s+4} \to \begin{matrix} R(-s)^s \\ \p \\ R(-s-1)^3 \\ \end{matrix} \to R.$$
In particular, the Betti table for $R/I$ as an $R$-module is:
$$\begin{tabular}{L|L|L|L|L}
     & 0 & 1 & 2 & 3  \\
     \hline 
   0  & 1 & 0 & 0 & 0 \\
   \hline
   s-1 & 0 & s & 0 & 0 \\
   \hline 
   s & 0 & 3 & s+4 & 1 \\
   \hline 
   2s-1 & 0 & 0 & 0 &1 \\
\end{tabular}$$ 
\end{cor}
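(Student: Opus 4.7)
The plan is to invoke Theorem~\ref{12.6} under the identifications of Remark~\ref{translate} and verify that the resulting free complex is already minimal by carefully tracking graded shifts. Since $s$ is even and the Betti table of $I_2$ is that of Proposition~\ref{btab3}, Corollary~\ref{evens} guarantees that no auxiliary $\psi_j$'s are needed: $I=(\phi_1,\ldots,\phi_s)+R_+\phi_{s+1}$ is a minimal generating set for $I$. Setting $n=s/2$ so that $2n+1=s+1$ matches the rank of $V$, Theorem~\ref{12.6} immediately yields a free complex of the shape (\ref{12.6.1}) resolving $R/I$.

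The next step is to compute the graded twists of the modules in (\ref{12.6.1}). Since $V'$ maps under $d_1$ to the degree-$s$ Pfaffians $\phi_1,\ldots,\phi_s$, we have $V'=R(-s)^s$; since $U$ maps under $d_1$ into $R_+\phi_{s+1}$, whose generators have degree $s+1$, we have $U=R(-s-1)^3$. Reading off the minimal resolution of $R/I_2$, whose presenting matrix is alternating with entries of degree $2$, yields $V^*=R(-s-2)^{s+1}$ and $\bigwedge^{s+1}V^*=R(-2s-2)$; the Koszul structure then forces $\bigwedge^2U=R(-s-2)^3$ and $\bigwedge^3U=R(-s-3)$. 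Summing these contributions recovers the free modules listed in the statement.

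Finally, I would verify minimality by showing that every block of every differential has entries in $R_+$. The Buchsbaum--Eisenbud and Koszul blocks do so automatically (entries of degree $s$, $2$, or $1$). The two off-diagonal blocks require care: the map $q\colon V^*\to U$ is defined by $x\circ q=v_0'$, and since $v_0'$ encodes a column of the $\phi$-matrix (with degree-$2$ entries) while $x$ is linear, $q$ must consist of linear forms; and the block $B\colon \bigwedge^{s+1}V^*\to \bigwedge^2U$ sends $R(-2s-2)$ into $R(-s-2)^3$, so its entries must be of degree $s$. Since $s\geq 3$, every such entry lies in $R_+$, so (\ref{12.6.1}) is already minimal and the stated Betti table follows. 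As a consistency check, Corollary~\ref{numgenss} gives $\mu(I)=\mu(K)+2-\rank_k(q\otimes k)=(s+1)+2-0=s+3$, matching the first Betti number. The subtle point is precisely this degree bookkeeping that pins down the positive degree of the cross-maps $q$ and $B$.
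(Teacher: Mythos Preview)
Your proposal is correct and follows precisely the route the paper takes: the paper's proof is the single sentence ``This is an immediate consequence of Corollary~\ref{evens} combined with the resolution of Theorem~\ref{12.6},'' and you have simply unpacked that sentence by tracking the graded twists through the mapping cone and verifying that the cross-maps $q$ and $B$ land in $R_+$. One small notational caution: when you write ``the Koszul structure then forces $\bigwedge^2U=R(-s-2)^3$,'' you are (correctly) reading off the twist in homological degree~2 of the Koszul complex after multiplying by the degree-$s$ Pfaffian, not the literal graded exterior square of $R(-s-1)^3$; the paper uses the same loose convention, so this is harmless.
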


\begin{proof}
This is an immediate consequence of Corollary \ref{evens} combined with the resolution of Theorem \ref{12.6}.
\end{proof}

\section{Applications and Examples}\label{appex}

In this section we examine classes of examples arising from variants of matrices defined in Section $3$ of \cite{christ}. 
\begin{definition}\label{Umat}
Let $U_m^{ev}$ denote the $m\times m$ matrix with entries from the polynomial ring $R=k[x,y,z]$ defined by:
$$U^{ev}_{i,m-i} = x^2, \quad U^{ev}_{i,m-i+1} = z^2, \quad U^{ev}_{i,m-i+2} = y^2.$$
Similarly, define $U_m^{odd}$ via:
$$U^{odd}_{i,m-i} = x^2, \quad U^{odd}_{i,m-i+1} = z^2, \quad U^{odd}_{i,m-i+2} = y^2, \ \textrm{for } i<m$$
and $U^{odd}_{m,1} = z$, $U^{odd}_{m,2} = y$. All other entries are defined to be $0$. Define $d_m^{ev} := \det ( U_m^{ev})$ and $d_m^{odd} := U_{m}^{odd}$.
\end{definition}

\begin{observation}
For all $i = 1 , \dots , m$,
$$U_m^{ev} = \begin{pmatrix} O_{x^2} & U^{ev}_i \\
U^{ev}_{m-i+1} & ^{y^2}O \\
\end{pmatrix}$$
$$U_m^{odd} = \begin{pmatrix} O_{x^2} & U^{ev}_i \\
U^{odd}_{m-i+1} & ^{y^2}O \\
\end{pmatrix}$$
where $O_{x^2}$ denotes the appropriately sized matrix with $x^2$ as the bottom rightmost corner entry and zeroes elsewhere. Similarly, $^{y^2}O$ denotes the appropriately sized matrix with $y^2$ in the top leftmost corner and zeroes elsewhere.
\end{observation}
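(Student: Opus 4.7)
The plan is to verify both claimed block decompositions by direct inspection of the entries prescribed by Definition \ref{Umat}. First, I would fix the intended block convention: the upper-right corner block (of size $i \times i$) occupies rows $1, \ldots, i$ and columns $m-i+1, \ldots, m$, and the lower-left corner block (of size $(m-i+1) \times (m-i+1)$) occupies rows $i, \ldots, m$ and columns $1, \ldots, m-i+1$. The two corner blocks therefore overlap in exactly the single entry at position $(i, m-i+1)$, which is the middle anti-diagonal entry $z^2$ of $U_m^{ev}$ and is simultaneously the $(i,1)$-entry of $U_i^{ev}$ and the $(1, m-i+1)$-entry of $U_{m-i+1}^{ev}$, so the overlap is automatically consistent.

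Next, I would verify the identification of the upper-right corner with $U_i^{ev}$ via reindexing: the three parallel ``stripes'' of $U_m^{ev}$ are $(r, m-r) = x^2$, $(r, m-r+1) = z^2$, and $(r, m-r+2) = y^2$. Restricting to rows $1, \ldots, i$ and columns $m-i+1, \ldots, m$ and shifting the columns by $-(m-i)$ recovers the defining stripe pattern of $U_i^{ev}$ on a fresh $i \times i$ index set; an analogous reindexing identifies the lower-left block with $U_{m-i+1}^{ev}$. For the complementary regions, straightforward index arithmetic shows that the three nonzero stripes of $U_m^{ev}$ meet the upper-left region only at the single entry $(i, m-i) = x^2$ and meet the lower-right region only at $(i, m-i+2) = y^2$, matching the descriptions of $O_{x^2}$ and ${}^{y^2}O$ (zero everywhere except at the bottom-right of $O_{x^2}$ and the top-left of ${}^{y^2}O$, respectively).

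For $U_m^{odd}$, the only difference from $U_m^{ev}$ lies in the last row, where the entries at columns $1$ and $2$ are $z$ and $y$ rather than $z^2$ and $y^2$. When $i < m$, this modified row is contained entirely in the lower-left block and agrees with the last row of $U_{m-i+1}^{odd}$ under the same reindexing, while the upper-right block, which does not touch row $m$, remains $U_i^{ev}$. The one delicate point, more bookkeeping than substance, is the degenerate boundary cases $i = 1$ and $i = m$, where one of the four blocks shrinks to a strip, a single entry, or an empty region and the overlap conventions must be re-examined; these cases are straightforward to check directly but are where an off-by-one error is most likely to appear.
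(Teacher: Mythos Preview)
The paper states this observation without proof, so there is nothing to compare against; your direct entry-by-entry verification is exactly the kind of check the reader is implicitly expected to make. Your identification of the overlapping-block convention (the upper-right $i\times i$ block and the lower-left $(m-i+1)\times(m-i+1)$ block share the single entry at position $(i,m-i+1)$, and the off-diagonal blocks $O_{x^2}$ and ${}^{y^2}O$ likewise overlap with the corner blocks along row $i$ and column $m-i+1$) is the right way to make the statement precise, and the reindexing argument you outline is correct.
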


\begin{definition}
Define $V_m^{ev}$ to be the $(2m+1)\times (2m+1)$ skew symmetric matrix
$$V^{ev}_m := \begin{pmatrix}
O & O_{x^2} & U_m^{ev} \\
-(O_{x^2})^T & 0 & ^{y^2}O \\
-U_m^{ev} & -(^{y^2}O)^T & O \\
\end{pmatrix}$$
and $V_m^{odd}$ to be the $(2m+1) \times (2m+1)$ skew symmetric matrix 
$$V^{odd}_m := \begin{pmatrix}
O & O_{x^2} & (U_m^{odd})^T \\
-(O_{x^2})^T & 0 & ^{y^2}O \\
-U_m^{odd} & -(^{y^2}O)^T & O \\
\end{pmatrix}$$
\end{definition}

To see the pattern a little more clearly, the first first couple of matrices are:
$$V_1^{ev} = \begin{pmatrix}
      0&x^{2}&z^{2}\\
      {-x^{2}}&0&y^{2}\\
      {-z^{2}}&{-y^{2}}&0\end{pmatrix}, \quad V_2^{ev} = \begin{pmatrix}
      0&0&0&x^{2}&z^{2}\\
      0&0&x^{2}&z^{2}&y^{2}\\
      0&{-x^{2}}&0&y^{2}&0\\
      {-x^{2}}&{-z^{2}}&{-y^{2}}&0&0\\
      {-z^{2}}&{-y^{2}}&0&0&0\end{pmatrix}$$
$$V_1^{odd} = \begin{pmatrix}
      0&x^{2}&z\\
      {-x^{2}}&0&y\\
      {-z}&{-y}&0\end{pmatrix}, \quad V_2^{odd} = \begin{pmatrix}
      0&0&0&x^{2}&z\\
      0&0&x^{2}&z^{2}&y\\
      0&{-x^{2}}&0&y^{2}&0\\
      {-x^{2}}&{-z^{2}}&{-y^{2}}&0&0\\
      {-z}&{-y}&0&0&0\end{pmatrix}$$

\begin{prop}
The ideal of submaximal pfaffians $\textrm{Pf} (V_m^{ev})$ is minimally generated by the elements
$$x^{2m-2i}d_i^{ev}, \quad y^{2m-2i}d_i^{ev} \ \textrm{for} \ 0 \leq i \leq m-1, \ \textrm{and} \ d_m^{ev}$$
Similarly, the ideal of submaximal pfaffians $\textrm{Pf} (V_m^{odd})$ is minimally generated by the elements
$$x^{2m-2i} d_i^{odd} \ \textrm{for} \ 0 \leq i \leq m-1 \ y^{2m-1}, \ y^{2m-2i}d_i^{odd} \ \textrm{for} \ 0 \leq i \leq m-2$$
$$\textrm{and} \ d_m^{odd}$$
\end{prop}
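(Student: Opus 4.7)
The plan is to compute each of the $2m+1$ submaximal Pfaffians of $V_m^{ev}$ directly, indexed by the row/column $k \in \{1,\ldots,2m+1\}$ that gets deleted, and to match each resulting expression (up to sign) with a generator on the claimed list; the analysis for $V_m^{odd}$ then runs along the same lines with one localized modification. The easiest case is $k = m+1$: the non-zero entries of the middle row and column of $V_m^{ev}$ lie entirely inside the blocks $O_{x^2}$ and $^{y^2}O$, so after deletion one is left with the block-antidiagonal matrix $\begin{pmatrix} 0 & U_m^{ev} \\ -U_m^{ev} & 0 \end{pmatrix}$, whose Pfaffian is $\pm\det(U_m^{ev}) = \pm d_m^{ev}$.

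For $1 \leq k \leq m$ the argument proceeds in two stages (the cases $m+2 \leq k \leq 2m+1$ follow from the obvious $x \leftrightarrow y$ reflection symmetry of $V_m^{ev}$, together with the fact that each $d_i^{ev}$ is invariant under swapping $x$ and $y$). First, a sparsity observation: in the Pfaffian sum over perfect matchings of the $2m$ surviving indices, the top-left and bottom-right zero blocks of the deleted matrix force every non-vanishing matching to pair the surviving middle vertex with the single bottom-block vertex it connects to via the $y^2$-entry of $^{y^2}O$. Pairing it instead with the surviving $-x^2$-neighbour (available when $k \neq m$) would leave two bottom-block vertices that must be matched among themselves, which is impossible since the bottom-right block is zero. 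Consequently the Pfaffian equals $\pm y^2 \cdot \det\bigl(U_m^{ev}[\widehat{k},\widehat{1}]\bigr)$, where $\widehat{k}$ and $\widehat{1}$ indicate deletion of the $k$-th row and first column. Second, an iterated cofactor expansion along the last row: the anti-diagonal band shape of $U_m^{ev}$ ensures that once the first column is removed, the last row of $U_m^{ev}[\widehat{k},\widehat{1}]$ contains only the single entry $y^2$, sitting in what is now column $1$; expanding along this row strips off a factor of $y^2$ and reduces the matrix by one in each direction. After $m-k$ such steps one is left with the top-right $(k-1)\times(k-1)$ submatrix of $U_m^{ev}$, and a direct inspection of entries (using the anti-diagonal placement of $x^2$, $z^2$, $y^2$) shows this submatrix coincides, after re-indexing, with $U_{k-1}^{ev}$. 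This produces the Pfaffian $\pm y^{2(m-k+1)} d_{k-1}^{ev}$, which runs through the full list of $y$-generators as $k$ varies; the reflected computation produces the $x$-generators.

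The analysis of $V_m^{odd}$ differs only in that the last row of $U_m^{odd}$ carries $z$ and $y$ in place of $z^2$ and $y^2$. This modifies the iterated cofactor calculation in exactly one step — the step that would otherwise pick up a $y^2$ from the original last row of $U_m^{odd}$ now picks up only a $y$ — and this reduction produces both the separated generator $y^{2m-1}$ in the list and the truncation $0 \leq i \leq m-2$ on the remaining $y^{2m-2i} d_i^{odd}$ generators. Minimality of the generating set in either case is automatic: by the Buchsbaum-Eisenbud structure theorem, the submaximal Pfaffians of a $(2m+1) \times (2m+1)$ alternating matrix with no unit entries form a minimal generating set of the grade-$3$ Gorenstein ideal they define. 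The main technical obstacle I anticipate is the sign bookkeeping in the iterated cofactor argument together with the verification that the top-right $(k-1)\times(k-1)$ submatrix of $U_m^{ev}$ really is $U_{k-1}^{ev}$ after re-indexing; this is a pleasant but fiddly consequence of the persymmetric shape of $U_m^{ev}$ and needs careful index tracking.
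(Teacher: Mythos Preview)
Your argument for $V_m^{ev}$ is correct: the forced matching of the middle vertex, the reduction to $\det\bigl(U_m^{ev}[\widehat{k},\widehat{1}]\bigr)$, the iterated peel along the last row stripping off $y^2$ at each step, and the identification of the terminal block with $U_{k-1}^{ev}$ all go through, and the $x\leftrightarrow y$ symmetry (under simultaneous row/column reversal, which sends $V_m^{ev}$ to $-V_m^{ev}$) legitimately handles the bottom half. The paper itself gives no details here---its proof is the single line ``essentially identical to Proposition~3.3 in \cite{christ}''---so your direct computation is more self-contained than what the paper offers.

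The odd case has two genuine gaps. First, the upper-right block of $V_m^{odd}$ is $(U_m^{odd})^T$, not $U_m^{odd}$, and since $U_m^{odd}$ is not symmetric this matters: after deleting row $k$ with $1\le k\le m$ and pairing off the middle vertex, the relevant minor is $\det\bigl((U_m^{odd})^T[\widehat{k},\widehat{1}]\bigr)=\det\bigl(U_m^{odd}[\widehat{1},\widehat{k}]\bigr)$, not $\det\bigl(U_m^{odd}[\widehat{k},\widehat{1}]\bigr)$. The degree-one entries sit in the last \emph{column} of $(U_m^{odd})^T$, so the peel along the last row still strips off $y^2$ at every step; the modification appears only in the terminal $(k-1)\times(k-1)$ block, which is $(U_{k-1}^{odd})^T$ rather than $U_{k-1}^{ev}$, with determinant $d_{k-1}^{odd}$. (For $k=1$ this block degenerates to the single entry $y$, producing $y^{2m-1}$.) Second, $V_m^{odd}$ has no $x\leftrightarrow y$ symmetry---already visible from the asymmetric generator list---so the Pfaffians for $m+2\le k\le 2m+1$ need their own analysis. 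That one runs by pairing the middle vertex with vertex $m$ (weight $x^2$) and then peeling from the \emph{first} row of the resulting minor of $U_m^{odd}$, stripping off $x^2$ factors and terminating in the bottom-left block $U_{m-j}^{odd}$ to give $x^{2j}d_{m-j}^{odd}$. Once you make these two corrections your approach goes through.
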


\begin{proof}
The proof is essentially identical to that of Proposition $3.3$ in \cite{christ}. 
\end{proof}

\begin{example}
Consider the ideal $\textrm{Pf} (V_m^{odd})$ and trim the generator $x^{2m-2}d_1^{odd} = x^{2m-2}z$. Let $\phi \in \bigwedge^2 V$ denote the element corresponding to the matrix $V_m^{odd}$. If we consider $x^{2m-2}z$ as the $2m$th generator, decompose $V = \bigoplus_{i=1}^{2m+1} Re_i$ as $V' \oplus Re_{2m}$, where the basis $e_i$ corresponds to the $i$th signed submaximal pfaffian of $V_m^{odd}$. Let $\phi \in \bigwedge^2 V$ denote the element corresponding to the matrix $V_m^{odd}$; notice
$$\phi = \phi' -e_{2m} \w (x^2e_1 + z^2 e_2 + y^2 e_3)$$
for some $\phi' \in \bigwedge^2 V'$. Take $v_0' := -x^2 e_1 - z^2 e_2 - y^2 e_3$ and let $U = Re_x \oplus Re_y \oplus Re_z$ with map $X : e_x \mapsto x$, $e_y \mapsto y$, and $e_z \mapsto z$. If $q : V^* \to U$ is the map sending $e_1^* \mapsto -xe_x$, $e_2^* \mapsto -z e_z$, $e_3^* \mapsto -y e_y$, and all other basis element to $0$, then the following diagram commutes:
$$\xymatrix{ & V^* \ar[dl]_-{q} \ar[d]^-{v_0'} \\
U \ar[r]^{X} & \im X \\}$$
Similarly, if $B : \bigwedge^{2m+1} V^* \to \bigwedge^2 U$ is defined by sending $\omega \mapsto x y^{2m-2} e_x \w e_y + y^{2m-3} z^2 e_y \w e_z$ (where $\omega$ is a generator for $\bigwedge^{2m+1} V^*$), then the following diagram commutes:
$$\xymatrix{\bigwedge^{2m+1} V^* \ar[r]^-{\phi^{(n)} (-)} \ar[d]^-{B} & V^* \ar[d]^-{q} \\
\bigwedge^2 U \ar[r]^-{X} & U \\}$$
Employing the construction of Theorem \ref{12.6}, we deduce that the mapping cone is acyclic; moreover, the entries of all maps involved have entries in $R_+$, so this is a minimal free resolution of the ideal
$$J = (\textrm{Pf} ( V_m^{odd}) \backslash  \textrm{Pf}_{2m} (V_m^{odd}) ) + R_+\textrm{Pf}_{2m} (V_m^{odd}).$$
This implies that $J$ is minimally generated by $2m+3$ elements (and by Lemma \ref{tormins} defines a ring of Tor algebra class $G(2m)$). 
\end{example}

\begin{prop}\label{isminl}
Let $R=k[x,y,z]$ with the standard grading, where $k$ is any field. Let $I \subset R_+^2$ be a homogeneous $R_+$-primary grade $3$ Gorenstein ideal with generators obtained from the ideal of submaximal pfaffians $\textrm{Pf} (M)$ for some skew symmetric matrix $M$. If the $i$th row of $M$ has entries in $R_{>1}$, then 
$$(\textrm{Pf}_j (M) \mid j \neq i ) + R_+ \textrm{Pf}_i (M)$$
is minimally generated by $\mu(I)+2$ elements and defines a ring of type $2$.
\end{prop}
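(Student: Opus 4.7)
The plan is to recognize the statement as an application of Theorem \ref{12.6} and Corollary \ref{numgenss}. Writing $M$ as a $(2n+1)\times (2n+1)$ matrix, take $V = R^{2n+1}$ with basis dual to the signed submaximal Pfaffians of $M$, let $v_0$ be the basis vector attached to $\textrm{Pf}_i(M)$, take $U = R^3$ with $x: U \to R$ the Koszul map sending the chosen basis to $x,y,z$, and let $\phi \in \bigwedge^2 V$ be the bivector corresponding to $M$. Under these identifications the vector $v_0' \in V'$ from (\ref{phi}) has coordinates equal (up to sign) to the entries of the $i$th row of $M$. Hypothesis \ref{SU12}.\ref{SU12.c} then follows from the assumption that this row lies in $R_{>1} \subseteq R_+ = \im x$; the remaining items of Setup \ref{SU12} are automatic (grade three is given, and $v_0 \wedge \phi^{(n)}$ corresponds up to sign to the nonzero element $\textrm{Pf}_i(M)$ in the domain $R$). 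With this dictionary, the ideal $\mcI$ of Setup \ref{SU12} coincides with the trimmed ideal in the statement.

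For the generator count, the key observation is that $q\otimes k = 0$. By Observation \ref{obs12.1}.\ref{obs12.1.b}, the map $q$ satisfies $x\circ q = v_0'$; since $x$ is the $1\times 3$ row $(x\ y\ z)$ of linear forms while $v_0'$ has entries of degree $\geq 2$, any homogeneous choice of $q$ must have all entries in $R_+$. Corollary \ref{numgenss} then immediately gives $\mu(\cat{I}) = \mu(I) + 2$.

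For the type-two claim, the last free module in the resolution (\ref{12.6.1}) has rank $\binom{2n+1}{2n+1} + \binom{3}{3} = 2$, so it suffices to verify that the resolution is minimal. All entries of $d_1$ and $d_2$ lie in $R_+$ by inspection: the generators of $\cat{I}$ live in $R_+^2$, the entries of the alternating matrix for $\phi$ lie in $R_+$ (as $I$ is minimally generated in degree $\geq 2$), the Koszul differential is given by linear forms, and $q$ has entries in $R_+$ by the previous paragraph. The only map requiring thought is $B$ of Observation \ref{obs12.1}.\ref{obs12.1.c}: since $q\circ \phi^{(n)}$ has entries in $R_+\cdot R_+^2 = R_+^3$ while $x: \bigwedge^2 U \to U$ raises internal degree by one, a homogeneous lift $B$ can be chosen with entries in $R_+^2$. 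Hence $d_\bullet \otimes k = 0$, the resolution is minimal, and $\dim_k \tor_3^R(R/\cat{I},k) = 2$, so $R/\cat{I}$ has type two.

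The main obstacle is setting up the translation to Setup \ref{SU12} cleanly; once the dictionary between Pfaffians and the bivector $\phi$ is in place, the hypothesis on the $i$th row of $M$ translates directly into the degree bound that forces $q$ into $R_+$, and all other assertions reduce to routine degree-counting against Theorem \ref{12.6} and Corollary \ref{numgenss}.
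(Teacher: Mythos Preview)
Your proof is correct and follows essentially the same approach as the paper: both translate to the setup of Theorem \ref{12.6}, deduce $q\otimes k = 0$ from the degree hypothesis on the $i$th row together with Corollary \ref{numgenss}, and then verify $B\otimes k = 0$ by a degree count using $I\subset R_+^2$ to conclude minimality and hence type $2$. The paper's degree count for $B$ is phrased as ``entries in $R_+^{t-1}$ where $t$ is the initial degree of $I$,'' while you factor $q\circ\phi^{(n)}$ through $R_+\cdot R_+^2$; these are the same observation.
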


\begin{proof}
Let $M$ be obtained from the element $\phi \in \bigwedge^2 V$, where $V$ is a free module of rank $\mu (I)$. Let $e_i$ be the basis element of $V$ corresponding to $\textrm{Pf}_i (M)$; write $V = V' \oplus Re_i$. It must be verified that $q \otimes k=0$ and $B \otimes k =0$. The statement that the $i$th row of $M$ has entries in $R_{>1}$ means that if
$$\phi = \phi' + e_i \w v_0',$$
then $0 \neq v_0' \in R_{>1} V$. Counting degrees on the induced map $q : V^* \to U$ of Observation \ref{obs12.1}.\ref{obs12.1.z}, we deduce $q (V^*) \subset R_+ U$, so that $q \otimes k = 0$. In particular, by Proposition \ref{numgenss},
$$(\textrm{Pf}_j (M) \mid j \neq i ) + R_+ \textrm{Pf}_i (M)$$
is minimally generated by $\mu (I) - 1 + 3 = \mu(I)+2$ elements.

By the assumption that $I \subseteq R_+^2$, a degree count shows that $B \otimes k = 0$. More precisely, if $t$ is the initial degree of $I$, then the matrix representation of $B$ with respect to the standard bases has entries in $R_+^{t-1}$. We conclude that the resolution provided by Theorem \ref{12.6} is minimal.
\end{proof}

\section{A Class of Ideals with Extremal Graded Betti Numbers}\label{extbetti}

Adopt Setup \ref{setup2}. The ideal $I_2$ has Betti table arising from Proposition \ref{btab1} for some integer $b < s+1$ by Proposition \ref{alscomp}. We may fit $I$ into the short exact sequence
$$0 \to I_2 / I \to R/I \to R/I_2 \to 0$$
whence upon counting ranks on the graded strands of the long exact sequence of $\tor$, we deduce that $\dim_k \tor_1^R (R/I,k)_{s+1} \leq b+3$. Since $b \leq s$, $\dim_k \tor_1^R (R/I,k)_{s+1} \leq s+3$. Furthermore: 
\begin{prop}\label{torbound}
Let $I$ be as in Setup \ref{setup2}. Then $\dim_k \tor_1^R (R/I,k)_{s+1} \leq s+2$. 
\end{prop}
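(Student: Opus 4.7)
The plan is to use the short exact sequence $0 \to I \to I_2 \to I_2/I \to 0$. By Proposition \ref{trimmed}, $R_+\phi_{s+1}\subseteq I$ and $\phi_{s+1}\notin I$, so $I_2/I \cong k(-s)$, concentrated in internal degree $s$. Taking the associated long exact sequence of $\tor^R(-,k)$ and extracting internal degree $s+1$ yields
\[ \tor_1^R(I_2, k)_{s+1} \xrightarrow{\alpha} \tor_1^R(I_2/I, k)_{s+1} \to (I/R_+I)_{s+1} \to (I_2/R_+I_2)_{s+1} \to 0, \]
whose dimensions are $b$, $3$, $\beta_{1,s+1}(R/I)$, and $b$, respectively, since $\tor_1^R(k(-s),k)_{s+1}\cong R_1$ has dimension $3$. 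Hence $\beta_{1,s+1}(R/I) = b + 3 - \rank\alpha$; this already recovers the bound $\leq b+3\leq s+3$ in the excerpt and reduces the desired improvement to the critical case $b=s$, where one must show $\rank\alpha \geq 1$.

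The map $\alpha$ is transparent at the chain level. Lifting the surjection $I_2 \twoheadrightarrow k(-s)$ (sending $\phi_{s+1}$ to a generator of $k(-s)$ and every other minimal generator to zero) to a comparison of minimal free resolutions and tensoring with $k$, one finds that $\alpha\colon k^b \to R_1 = k^3$ sends the $k$-th minimal degree-$(s+1)$ syzygy $(a_1^{(k)},\ldots,a_{s+1}^{(k)},c_1^{(k)},\ldots,c_b^{(k)})$ of the presentation $(\phi_1,\ldots,\phi_{s+1},\psi_1,\ldots,\psi_b)$ of $I_2$ to its $\phi_{s+1}$-coordinate $a_{s+1}^{(k)}\in R_1$. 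Thus $\rank\alpha = \dim_k \Span_k(a_{s+1}^{(1)},\ldots,a_{s+1}^{(b)})$.

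The main obstacle is thus to show, in the case $b=s$, that the span of the $a_{s+1}^{(k)}$ is nonzero. I would argue by contradiction: assuming every $a_{s+1}^{(k)}=0$, the Buchsbaum-Eisenbud theorem writes $I_2$ with a $(2s+1)\times(2s+1)$ alternating presentation matrix $M$ partitioned into blocks $(P,Q,R)$ by generator type, and this hypothesis is precisely that the $(s+1)$-th row of $Q$ vanishes. In the notation of Setup \ref{SU12}, this places $v_0'\in R_+^2 V'$, and hence $q\otimes k = 0$ by the construction of $q$ in Observation \ref{obs12.1}(b); Corollary \ref{numgenss} then gives $\mu(I)=\mu(I_2)+2=2s+3$, i.e., $\beta_{1,s+1}(R/I)=s+3$. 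Combined with the compressed-Hilbert-function value $\dim I_{s+1}=\binom{s+3}{2}-\binom{s}{2}=3s+3$, this forces $\dim_k R_1 I_s = 2s$, so the $s$ generators $\phi_1,\ldots,\phi_s$ of $I$ in degree $s$ would admit an $s$-dimensional space of linear syzygies. The hard part is to rule out this extreme degeneration of $I_s$ using the compression of $R/I$ (not merely of $R/I_2$): typically one shows that such an abundance of linear syzygies forces $I_s\subseteq \ell R_{s-1}$ for some linear form $\ell$, which through the inverse system collides with the non-vanishing $\phi_{s+1}\mu_1\neq 0$ built into the choice of $\phi_{s+1}$ in Proposition \ref{trimmed}.
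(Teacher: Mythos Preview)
Your setup and the identification of $\alpha$ are correct, and the reduction to showing $\rank\alpha\geq 1$ in the critical case $b=s$ is exactly where the paper's own argument lands (there the same quantity appears as $\rank_k(q\otimes k)$ for the map $q$ of Observation~\ref{obs12.1}). So the two approaches coincide up to this point.

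The genuine gap is in your final paragraph: the suggestion that an $s$-dimensional space of linear syzygies on $\phi_1,\dots,\phi_s$ should force a common linear factor is not the right mechanism, and you leave it as a sketch. What actually closes the argument is a Pfaffian computation. If every $a_{s+1}^{(k)}$ vanishes, then the $(s{+}1)$st row of the $(s{+}1)\times s$ linear block of the alternating presentation matrix $M$ is zero; write that block as $\bigl(\begin{smallmatrix}L\\0\end{smallmatrix}\bigr)$ with $L$ an $s\times s$ matrix of linear forms. Since the $\psi$--$\psi$ block of $M$ vanishes by minimality, the $\psi$-columns of $M$ give precisely the relations $(\phi_1,\dots,\phi_s)\,L=0$. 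On the other hand, deleting row and column $s{+}1$ from $M$ leaves $\bigl(\begin{smallmatrix}A&L\\-L^{T}&0\end{smallmatrix}\bigr)$, whose Pfaffian is $\pm\det L$; hence $\phi_{s+1}=\pm\det L\neq 0$. Thus $L$ is invertible over the fraction field of $R$, and $(\phi_1,\dots,\phi_s)\,L=0$ forces $\phi_1=\cdots=\phi_s=0$, contradicting $\dim_k I_s=s$. This replaces your ``hard part'' entirely. (For the record, the paper's proof is itself elliptic at this step: the stated dichotomy ``$q(V^*)\not\subset R_+U$ or $q\equiv 0$'' does not follow from a bare degree count, and the intermediate possibility $q\otimes k=0$ with $q\neq 0$ is what the determinant argument above eliminates.)
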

 
 \begin{proof}
 By counting ranks on the long exact sequence of $\tor$ induced by the short exact sequence
 $$0 \to I_2 / I \to R/I \to R/I_2 \to 0,$$
 we must have $\dim_k \tor_1^R (R/I_2,k)_{s+1} = s$, which is the maximum possible. By Proposition \ref{trimmed}, $I$ may be written
 $$I = (\phi_1 , \dots , \phi_s , \psi_1, \dots , \psi_s) + R_+ \phi_{s+1}$$
 where $\phi_1 , \dots , \phi_{s+1} , \psi_1, \dots , \psi_s$ is a minimal generating set for $I_2$. Assume for sake of contradiction that $\dim_k \tor_1^R (R/I,k)_{s+1} = s+3$; this means that the above generating set for $I$ is minimal. Thus the resolution of Theorem \ref{12.6} is a minimal free resolution for $R/I$.
 
 Let us examine the map $q : V^* \to U$. By counting degrees, one finds that $q(V^*) \not\subset R_+ U$ or that $q$ is identically the $0$ map. Either case is a contradiction, so that $\rank_k (q \otimes k) \geq 1$. 
 \end{proof}
 
 We now exhibit a class of ideals defining compressed rings with top socle degree $2s-1$ and $\dim_k \tor_1^R (R/I,k)_{s+1} = s+2$, showing that the inequality of \ref{torbound} is sharp. To do this, we first need some notation.

\begin{definition}
Let $U_m^{j}$ (for $j \leq m$) denote the $m\times m$ matrix with entries from the polynomial ring $R=k[x,y,z]$ defined by:
$$(U^{j}_m)_{i,m-i} = x^2, \quad (U^{j}_m)_{i,m-i+1} = z^2, \quad (U^{j}_m)_{i,m-i+2} = y^2 \ \textrm{for} \  i \leq m-j$$
$$(U^{j}_m)_{i,m-i} = x, \quad (U^{j}_m)_{i,m-i+1} = z, \quad (U^{j}_m)_{i,m-i+2} = y \ \textrm{for} \  i >m-j$$
and all other entries are defined to be $0$. Define $d_m^j := \det ( U_m^j)$. 
\end{definition}

To see the pattern, we have:
$$U_2^1 = \begin{pmatrix}
       x^{2}&z^{2}\\
       z&y\end{pmatrix}, \ U_3^1 = \begin{pmatrix}
       0&x^{2}&z^{2}\\
       x^{2}&z^{2}&y^{2}\\
       z&y&0\end{pmatrix}, \ U_3^2 = \begin{pmatrix}
       0&x^{2}&z^{2}\\
       x&z&y\\
       z&y&0\end{pmatrix}$$
\begin{definition}
Define $V_m^{j}$ (for $j< m$) to be the $(2m+1)\times (2m+1)$ skew symmetric matrix
$$V^{j}_m := \begin{pmatrix}
O & O_{x^2} & (U_m^{j})^T \\
-(O_{x^2})^T & 0 & ^{y^2}O \\
-U_m^{j} & -(^{y^2}O)^T & O \\
\end{pmatrix}$$
and if $j=m$, then $V_m^m$ is the skew symmetric matrix
$$V^{j}_m := \begin{pmatrix}
O & O_{x^2} & (U_m^{m})^T \\
-(O_{x^2})^T & 0 & ^{y}O \\
-U_m^{m} & -(^{y}O)^T & O \\
\end{pmatrix}$$
\end{definition}

Observe that the ideal of pfaffians $\textrm{Pf} (V_m^j)$ is a grade $3$ Gorenstein ideal with graded Betti table
$$\begin{tabular}{L|L|L|L|L}
     & 0 & 1 & 2 & 3  \\
     \hline 
   0  & 1 & 0 & 0 & 0 \\
   \hline
   2m-j-1 & 0 & 2m+1-j & j & 0 \\
   \hline 
   2m-j & 0 & j & 2m+1-j & 0 \\
   \hline 
   4m-2j-1 & 0 & 0 & 0 &1 \\
\end{tabular}$$ 
In particular, for any integer $s$, $\textrm{Pf} (V_s^s)$ has Betti table
$$\begin{tabular}{L|L|L|L|L}
     & 0 & 1 & 2 & 3  \\
     \hline 
   0  & 1 & 0 & 0 & 0 \\
   \hline
   s-1 & 0 & s+1 & s & 0 \\
   \hline 
   s & 0 & s & s+1 & 0 \\
   \hline 
   2s-1 & 0 & 0 & 0 &1 \\
\end{tabular}$$
\begin{chunk}\label{pfnote}
Given an $n \times n$ alternating matrix $M$, the notation $\textrm{Pf} (M)$ will denote the ideal of submaximal pfaffians of the matrix $M$. Similarly,
$$(\textrm{Pf} (M) \backslash \textrm{Pf}_i (M))$$
is shorthand for the ideal
$$(\textrm{Pf}_j (M) \mid 1 \leq j \leq n, \ j \neq i ),$$
where $\textrm{Pf}_j (M)$ denotes the pfaffian of the matrix obtained by deleting the $j$th row and column of $M$.
\end{chunk}
\begin{prop}\label{maxideal}
Let $s \geq 1$ be an integer and $R =k[x,y,z]$, where $k$ is any field. The ideal 
$$I := (\textrm{Pf} (V_s^s) \backslash \textrm{Pf}_{s+1} (V_s^s)) + R_+ \textrm{Pf}_{s+1} (V_s^s)$$
is minimally generated by $2s+2$ elements and defines a compressed ring with $\soc (R/I ) \cong k(-s) \oplus k(-2s+1)$ and Betti table
$$\begin{tabular}{L|L|L|L|L}
     & 0 & 1 & 2 & 3  \\
     \hline 
   0  & 1 & 0 & 0 & 0 \\
   \hline
   s-1 & 0 & s & s-1 & 0 \\
   \hline 
   s & 0 & s+2 & s+4 & 1 \\
   \hline 
   2s-1 & 0 & 0 & 0 &1 \\
\end{tabular}$$
\end{prop}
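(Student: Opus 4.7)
The plan is to realize $I$ as a trimming of the Gorenstein ideal $K := \textrm{Pf}(V_s^s)$ at its middle pfaffian and apply Theorem~\ref{12.6}. Deleting the middle row and column of $V_s^s$ leaves a block-antidiagonal matrix with $(U_s^s)^T$ in the upper-right and $-U_s^s$ in the lower-left, whose pfaffian is $\pm\det(U_s^s) = \pm d_s^s$, a polynomial of degree $s$. Reading off the middle row of $V_s^s$ gives $v_0' = -x^2 e_s + y e_{s+2}$ in the notation of Setup~\ref{SU12}, and since $v_0' \in R_+ V$ the hypotheses of Setup~\ref{SU12} hold in our polynomial setting.

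Take $U = Re_x \oplus Re_y \oplus Re_z$ with the first Koszul map $x : e_x \mapsto x,\, e_y \mapsto y,\, e_z \mapsto z$, so $\im(x) = R_+$ as in Remark~\ref{translate}. Define $q : V^* \to U$ by $q(e_s^*) = -xe_x$, $q(e_{s+2}^*) = e_y$, and $q = 0$ on all other dual basis vectors; this satisfies $x \circ q = v_0'$. Because every entry of $V_s^s$ lies in $R_+$, a degree count as in the proof of Proposition~\ref{isminl} shows the companion map $B : \bigwedge^{2s+1}V^* \to \bigwedge^2 U$ has entries in $R_+$, so $B \otimes k = 0$. The only entry of $q$ surviving modulo $R_+$ is $q(e_{s+2}^*) \equiv e_y$, giving $\rank_k(q \otimes k) = 1$. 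Corollary~\ref{numgenss} then yields $\mu(I) = \mu(K) + 2 - 1 = 2s+2$.

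For the graded Betti numbers, I would track degrees through the mapping cone of Theorem~\ref{12.6}. The Buchsbaum--Eisenbud resolution of $R/K$ has $F_1 = R(-s)^{s+1} \oplus R(-s-1)^s$, $F_2 = R(-s-1)^s \oplus R(-s-2)^{s+1}$, and $F_3 = R(-2s-2)$; since $e_{s+1}$ corresponds to the degree-$s$ pfaffian $\textrm{Pf}_{s+1} = d_s^s$, we have $V' = R(-s)^s \oplus R(-s-1)^s$. The bottom row of~(\ref{12.6.2}) is the Koszul complex on $\{x,y,z\}$ twisted by $R(-s)$: $0 \to R(-s-3) \to R(-s-2)^3 \to R(-s-1)^3 \to R$. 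Assembling the cone and cancelling the single pair of $R(-s-1)$ summands arising from $\rank_k(q\otimes k) = 1$---matching $e_y \in U$ with $e_{s+2}^* \in V^*$ (the dual of the degree-$(s+1)$ pfaffian $\textrm{Pf}_{s+2}$)---produces the claimed Betti table. Reading off $F_3 = R(-s-3) \oplus R(-2s-2)$ then gives $\soc(R/I) \cong k(-s) \oplus k(-2s+1)$, and compressedness follows by comparing the Hilbert function derived from the Betti table with the maximal Hilbert function for embedding dimension $3$ and socle polynomial $z^s + z^{2s-1}$.

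The main obstacle is the degree bookkeeping: one must verify that $\textrm{Pf}_{s+2}(V_s^s)$ has degree $s+1$ so that the cancellation from $q \otimes k$ occurs in the expected bidegree. This follows from the block structure of $V_s^s$, since deleting any row or column indexed $s+2, \ldots, 2s+1$ involves the bottom block $-U_s^s$ of linear entries, and the graded Betti numbers for $\textrm{Pf}(V_m^j)$ displayed just before the proposition pinpoint which pfaffians have degree $s$ versus degree $s+1$.
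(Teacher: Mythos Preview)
Your proposal is correct and follows essentially the same approach as the paper: both identify $v_0' = -x^2 e_s + y e_{s+2}$, define the same lift $q$, observe $\rank_k(q\otimes k)=1$ and $B\otimes k=0$, and then cancel a single $R(-s-1)$ pair from the mapping cone of Theorem~\ref{12.6} to obtain the minimal resolution. You supply more explicit degree bookkeeping and spell out the socle and compressedness conclusions, but the argument is the same.
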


\begin{proof}
We again use the resolution provided by Theorem \ref{12.6}. Let $V = \bigoplus_{i=1}^{2s+1} Re_i$, where $e_i$ corresponds to the $i$th pfaffian of $V_s^s$. Write $V = V' \oplus Re_{s+1}$ and let $\phi \in \bigwedge^2 V$ denote the element corresponding to $V^s_s$. By definition of the matrix $V_s^s$, we see
$$\phi = \phi' + e_{s+1} \w (-x^2 e_s + ye_{s+2}).$$
Set $v_0' := -x^2 e_s + ye_{s+2}$ and let $U = Re_x \oplus Re_y \oplus Re_z$ with map $X : e_x \mapsto x$, $e_y \mapsto y$, and $e_z \mapsto z$. Let $q :V^* \to U$ be the map sending $e_{s}^* \mapsto -x e_x$, $e_{s+2}^* \mapsto e_y$, and all other basis vectors to $0$. Then the following diagram commutes:
$$\xymatrix{ & V^* \ar[dl]_-{q} \ar[d]^-{v_0'} \\
U \ar[r]^{X} & \im X \\}$$
In particular, $\rank_k (q \otimes k) = 1$. We do not have to compute the map $B : \bigwedge^{2s+1} V^* \to \bigwedge^2 U$, since a degree count tells us that $B \otimes k = 0$. Thus the resolution of Theorem \ref{12.6} is not minimal, but we deduce that we only need to take the quotient by a subcomplex of the form 
$$0 \to R(-s-1) \to R(-s-1) \to 0$$
to obtain a minimal resolution. This immediately yields the Betti table of the statement; the other claims are immediate consequences of the Betti table. 
\end{proof}

\section{Tor Algebra Structures}\label{toralgstr}

In this section we examine some consequences for the Tor-algebra structures of the ideals resolved by Theorem \ref{12.6}. We start by defining what it means to have Tor algebra class $G(r)$. Although there are other Tor algebra families, we will only concern ourselves with the class $G$; the other families with their definitions may be found in \cite[Theorem 2.1]{torclass}.

\begin{definition}\label{classg}
Let $(R,\m,k)$ be a regular local ring with $I\subset \m^2$ and ideal such that $\pd_R (R/I) = 3$. Let $T_\bullet := \tor_\bullet^R (R/I , k)$. Then $R/I$ has Tor algebra class $G(r)$ if, for $m = \mu(I)$ and $t = \type (R/I)$, there exist bases for $T_1$, $T_2$, and $T_3$
$$e_1 , \dots , e_m, \quad f_1 , \dots , f_{m+t-1} , \quad g_1 , \dots , g_t,$$
respectively, such that the only nonzero products are given by
$$e_i f_i = g_1 = f_i e_i, \quad 1 \leq i \leq r.$$
Such a Tor algebra structure has
$$T_1 \cdot T_1 = 0, \quad \rank_k (T_1 \cdot T_2 ) = 1, \quad \rank_k (T_2 \to \hom_k (T_1 , T_3) ) = r,$$
where $r \geq 2$.
\end{definition}

\begin{theorem}[\cite{christ}, Theorem $2.4$, Homogeneous version]\label{toralg}
Let $R = k[x,y,z]$ with the standard grading and let $I \subseteq R_+^2$ be an $R_+$-primary homogeneous Gorenstein ideal minimally generated by elements $\phi_1 , \dots , \phi_{2m+1}$. Then the ideal
$$J = (\phi_1 , \dots , \phi_{2m} ) + R_+\phi_{2m+1}$$
is a homogeneous $R_+$-primary ideal and defines a ring of type $2$. Moreover,
\begin{enumerate}[(a)]
    \item If $m=1$, then $\mu (J) = 5$ and $R/J$ is class $B$.
    \item If $m=2$, then one of the following holds:
    \begin{equation*}
    \begin{split}
        &\bullet \mu(J) = 4 \ \textrm{and} \ R/J \ \textrm{is class} \ H(3,2) \\
        &\bullet \mu(J) = 5 \ \textrm{and} \ R/J \ \textrm{is class} \ B \\
        &\bullet \mu(J) \in \{ 6,7 \} \ \textrm{and} \ R/J \ \textrm{is class} \ G(r) \ \textrm{with} \ \mu(J)-2 \geq r \geq \mu(J)-3 \\
        \end{split}
    \end{equation*}
    \item If $m\geq 3$, then $R/J$ is class $G(r)$ with $\mu(J)-2 \geq r \geq \mu(J) - 3$. 
\end{enumerate}
\end{theorem}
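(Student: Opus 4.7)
The strategy is a direct transfer of \cite[Theorem~2.4]{christ} from the local to the homogeneous setting. First I would verify that $J$ is $R_+$-primary, so that $R/J$ is Artinian: since $I$ is $R_+$-primary by assumption, $R_+\phi_i\subseteq(\phi_1,\dots,\phi_{2m})\subseteq J$ for $i\le 2m$, and $R_+\phi_{2m+1}\subseteq J$ by the very construction of $J$, giving $R_+ I\subseteq J\subseteq I$ and hence $\sqrt{J}=\sqrt{I}=R_+$. The type of $R/J$ equals $2$: this can be read off the resolution of Theorem~\ref{12.6}, whose rightmost module $\bigwedge^{2n+1}V^*\oplus\bigwedge^3U$ has total rank $2$ in the present setup; alternatively, $\phi_{2m+1}$ represents a new socle element of $R/J$ that is plainly independent from the socle of $R/I$.

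Next I would localize at $R_+$ to pass to the regular local ring $(R_{R_+},R_+R_{R_+},k)$. Because $R/J$ is Artinian the canonical map $R/J\to (R/J)_{R_+}$ is an isomorphism, and the minimal graded free resolution of $R/J$ over $R$ localizes to a minimal free resolution of $(R/J)_{R_+}$ over $R_{R_+}$; the associative commutative DG-algebra structure on such a resolution (from \cite{BE}) transfers as well, yielding an isomorphism of graded $k$-algebras
$$
\tor^R_\bullet(R/J,k)\;\cong\;\tor^{R_{R_+}}_\bullet((R/J)_{R_+},k).
$$
In particular the Tor algebra class is preserved under this localization, so \cite[Theorem~2.4]{christ} applied to $JR_{R_+}$ delivers the classification in each of the three cases $m=1$, $m=2$, and $m\ge 3$.

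The one piece of bookkeeping worth flagging is the match between $\mu(J)$ and the possible ranks of the map $q\otimes k$ appearing in the resolution of Theorem~\ref{12.6}: by Corollary~\ref{numgenss} one has $\mu(J)=(2m+1)+2-\rank_k(q\otimes k)$, so the four sub-cases allowed in part (b) correspond to $\rank_k(q\otimes k)\in\{0,1,2,3\}$, the single value forced in part (a) corresponds to $\rank_k(q\otimes k)=0$, and in part (c) the two values of $\mu(J)$ correspond to $\rank_k(q\otimes k)\in\{0,1\}$. No genuinely new argument is required beyond what is already in \cite{christ}; the main technical point to confirm is that the DG-algebra structure really is preserved under the localization $R\to R_{R_+}$, which follows from the functoriality of minimal (DG-algebra) resolutions for perfect ideals of projective dimension three.
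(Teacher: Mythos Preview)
The paper does not supply a proof of this theorem at all: it is simply quoted from \cite{christ} (with the label ``Homogeneous version'') and used as a black box. Your localization argument is therefore not competing with anything in the paper; rather, it is a reasonable justification for why the local statement in \cite{christ} applies verbatim in the standard graded setting. The core of your reduction is correct: since $R/J$ is Artinian it coincides with its localization at $R_+$, the minimal graded free resolution localizes to a minimal free resolution over the regular local ring $R_{R_+}$, and the induced $k$-algebra structure on $\tor^R_\bullet(R/J,k)$ agrees with $\tor^{R_{R_+}}_\bullet((R/J)_{R_+},k)$, so the classification from \cite[Theorem~2.4]{christ} transfers.

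Two small corrections to your bookkeeping paragraph. First, in part~(c) the statement places no restriction on $\mu(J)$, so your remark that ``the two values of $\mu(J)$ correspond to $\rank_k(q\otimes k)\in\{0,1\}$'' is unfounded; all four ranks $0,1,2,3$ (hence $\mu(J)\in\{2m,2m+1,2m+2,2m+3\}$) are a priori possible for $m\geq 3$. Second, your explanation for why $\rank_k(q\otimes k)=0$ is forced in part~(a) is implicit rather than stated: when $m=1$ the $3\times 3$ alternating presentation matrix has entries equal to the generators of $I$, which lie in $R_+^2$ by hypothesis, so $v_0'\in R_+^2 V'$ and hence $q$ has entries in $R_+$. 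Neither point affects the validity of the main reduction.
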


\begin{prop}
Adopt Setup \ref{setup2}. Assume furthermore that the Betti table of $I_2$ is given by those of Proposition \ref{btab3}. If $s$ is even, then $R/I$ defines a ring of Tor algebra class $G(s)$.
\end{prop}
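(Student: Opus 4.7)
My plan is to combine Theorem \ref{toralg} with a degree-count argument based on the explicit Betti table in Corollary \ref{evens2}.

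First, I would invoke Corollary \ref{evens}: in the even case, with $I_2$ having the Betti table of Proposition \ref{btab3}, the ideal $I$ is obtained from $I_2$ by trimming a single generator, and $\mu(I) = s + 3$. Since $I_2$ is grade $3$ Gorenstein with $\mu(I_2) = s + 1 = 2m + 1$ where $m = s/2 \geq 2$ (because $s$ even together with $s \geq 3$ forces $s \geq 4$), Theorem \ref{toralg} applies---case (b) when $s = 4$ and case (c) when $s \geq 6$---and yields $R/I$ of Tor algebra class $G(r)$ with $\mu(I) - 3 \leq r \leq \mu(I) - 2$, that is, $r \in \{s, s+1\}$.

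The main step is then to rule out $r = s + 1$, which I would accomplish using the internal grading. Set $T_i = \tor_i^R(R/I, k)$. By Corollary \ref{evens2}, $T_1$ is concentrated in internal degrees $s$ (with dimension $s$; denote this piece $T_1^{(s)}$) and $s+1$ (dimension $3$); $T_2$ is concentrated in degree $s + 2$; and $T_3 = kg_1 \oplus kg_2$ with $\deg g_1 = 2s + 2$ and $\deg g_2 = s + 3$. The map $T_2 \to \hom_k(T_1, T_3)$ is internal-degree preserving, so its image consists of graded maps $T_1 \to T_3$ of internal degree $s + 2$. Checking degrees, only the piece $T_1^{(s)}$ admits a non-zero pairing with $T_2$ landing in $T_3$: indeed $s + (s+2) = 2s + 2 = \deg g_1$, while $(s+1) + (s+2) = 2s+3$ does not appear among the degrees of $T_3$ (noting $2s+3 \neq s + 3$ since $s \geq 3$). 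Consequently
\[
r = \rank_k\bigl(T_2 \to \hom_k(T_1, T_3)\bigr) \;\leq\; \dim_k \hom_k(T_1^{(s)}, kg_1) = s,
\]
and combined with $r \geq s$ from Theorem \ref{toralg} we conclude $r = s$.

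The main obstacle is the degree bookkeeping needed to exclude every pairing except the one landing in $g_1$; once that is established, the upper bound $r \leq s$ is immediate. The subtlety that the Betti numbers of Corollary \ref{evens2} come from a genuinely minimal resolution has already been handled in the proof of that corollary, so no additional minimality verification is required.
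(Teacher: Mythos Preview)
Your proof is correct and follows essentially the same approach as the paper: apply Theorem \ref{toralg} via Corollary \ref{evens} to get $r \geq s$, then use the Betti table of Corollary \ref{evens2} and a degree count on the pairing $T_1 \cdot T_2 \subseteq T_3$ to conclude $\rank_k \delta \leq s$. Your version is in fact more careful than the paper's, as you explicitly separate the cases $m=2$ and $m\geq 3$ in Theorem \ref{toralg} and verify that $2s+3$ is not an internal degree of $T_3$.
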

\begin{proof}
By Theorem \ref{toralg} combined with Proposition \ref{evens}, $I$ is class $G(r)$ for some $r \geq s$. It suffices to show that the induced map on the Tor algebra
$$\delta : T_2 \to \hom (T_1 , T_3)$$
has rank $\leq s$, where $T_i := \tor_i^R (R/I , R )$. Examining the Betti table of Corollary \ref{evens2}, we see that
$$(T_1)_{s+1} \cdot (T_2) \subseteq (T_2)_{\geq 2s+3} = 0,$$
whence the only nontrivial products can occur between $(T_1)_s$ and $T_2$, implying $\rank_k \delta \leq s$. 
\end{proof}

\begin{prop}\label{propmax}
Adopt notation and hypotheses of Proposition \ref{maxideal}. Then $I$ defines a ring of Tor algebra class $G(2s-1)$. 
\end{prop}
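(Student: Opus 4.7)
The plan is to combine the classification result Theorem \ref{toralg}(c) with a degree count against the Betti diagram of $R/I$ produced in Proposition \ref{maxideal}.

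By Proposition \ref{maxideal}, $I$ arises from trimming the grade $3$ Gorenstein ideal $I_2 = \textrm{Pf}(V_s^s)$, which is minimally generated by $2m+1 = 2s+1$ elements with $m = s \geq 3$, and $\mu(I) = 2s+2$. First I would invoke Theorem \ref{toralg}(c), which places $R/I$ into Tor algebra class $G(r)$ for some $r \in \{\mu(I)-3, \mu(I)-2\} = \{2s-1, 2s\}$. The remaining task is to rule out $r = 2s$, which amounts to showing that the rank of the induced map $\delta : T_2 \to \hom_k(T_1, T_3)$ is at most $2s - 1$, where $T_i := \tor_i^R(R/I, k)$.

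To bound $\rank \delta$, I would carry out the following graded analysis. From the Betti table of Proposition \ref{maxideal}, $T_1$ is supported in degrees $s$ and $s+1$, $T_2$ is supported in degrees $s+1$ (of dimension $s-1$) and $s+2$, and $T_3$ is one-dimensional in each of the degrees $s+3$ and $2s+2$. In the class $G(r)$ structure, $T_1 \cdot T_2 \subseteq T_3$ is a one-dimensional graded subspace spanned by a homogeneous element $g_1$. Every product $ef$ of homogeneous $e \in T_1$ and $f \in T_2$ satisfies $\deg(ef) \geq s + (s+1) = 2s+1$, which exceeds $s+3$ once $s \geq 3$. Therefore $g_1$ must be the homogeneous generator of $(T_3)_{2s+2}$. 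Decomposing the pairing $T_1 \otimes T_2 \to k \cdot g_1$ by degree, only the blocks $(T_1)_s \otimes (T_2)_{s+2}$ and $(T_1)_{s+1} \otimes (T_2)_{s+1}$ can contribute, and their ranks are bounded by $\dim(T_1)_s = s$ and $\dim(T_2)_{s+1} = s-1$ respectively. Summing gives $\rank \delta \leq 2s-1$, which together with the lower bound $r \geq 2s-1$ above forces $r = 2s-1$.

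The main obstacle, if one exists, is simply confirming the precise Betti-number input $\dim(T_2)_{s+1} = s-1$, which is what caps the second block; this was already established in Proposition \ref{maxideal}. A more laborious alternative would be to realize the DG-algebra multiplication explicitly by lifting cycles through the resolution of Theorem \ref{12.6}, but the degree argument sidesteps the computation entirely by converting the multiplicative problem into pure homological-degree bookkeeping.
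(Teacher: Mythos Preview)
Your argument for $s \geq 3$ is essentially the paper's own proof: invoke Theorem~\ref{toralg} for the lower bound $r \geq \mu(I)-3 = 2s-1$, then use the graded structure of the Tor algebra together with the Betti table of Proposition~\ref{maxideal} to bound $\rank_k \delta \leq s + (s-1) = 2s-1$ from above.

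The only gap is that Proposition~\ref{maxideal} is stated for $s \geq 1$, not $s \geq 3$, and your degree argument genuinely fails at $s = 2$. There $m = 2$, so it is Theorem~\ref{toralg}(b) rather than (c) that applies (this still yields $r \in \{3,4\}$ since $\mu(I) = 6$), but the inequality $2s+1 > s+3$ becomes an equality, so the product $(T_1)_s \cdot (T_2)_{s+1}$ lands in $(T_3)_{s+3} \neq 0$ and you cannot force $g_1$ into degree $2s+2$. The paper does not repair the degree argument for $s=2$; it simply dispatches that case by a direct Macaulay2 verification.
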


\begin{proof}
If $s=2$, a direct computation in Macaulay2 verifies that $I$ is class $G(3)$, so assume that $s \geq 3$. By Theorem \ref{toralg}, $I$ is class $G(r)$ for some $r \geq 2s-1$. It remains to show
$$\delta : T_2 \to \hom (T_1 , T_3)$$
has rank $\leq 2s-1$, where $T_i := \tor_i^R (R/I , R )$. First, observe that
$$(T_1)_s \cdot (T_2)_{s+1} \subseteq (T_2)_{2s+1}$$
and $(T_3)_{2s+1} = 0$ since $s \geq 3$. Similarly,
$$(T_1)_{s+1} \cdot (T_2)_{s+2} \subseteq (T_3)_{2s+3} = 0$$
whence the only nontrivial products are between $(T_1)_s$, $(T_2)_{s+2}$ and $(T_1)_{s+1}$, $(T_2)_{s+1}$, implying
$$\rank_k \delta \leq s + (s-1) = 2s-1.$$
\end{proof}

\begin{cor}\label{rbounds}
Adopt Setup \ref{setup2} with $s \geq 3$. Then $I$ has Tor algebra class $G(r)$ for some $s \leq r \leq 2s-1$.
\end{cor}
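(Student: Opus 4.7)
The plan is to combine Theorem \ref{toralg} with a graded pairing analysis of the Tor algebra $T_\bullet := \tor_\bullet^R(R/I, k)$. By Proposition \ref{trimmed}, $I$ is a trimming of the grade $3$ Gorenstein ideal $I_2$, with $\mu(I_2) = s+1+b = 2m+1 \geq 4$ for $s \geq 3$. I first establish that $R/I$ is of class $G(r)$. In case (c) of Theorem \ref{toralg} this is automatic. In case (b), we have $s+b = 4$, and Corollary \ref{numgenss} gives $\mu(I) = 7 - \rank_k(q \otimes k) \geq 6$ (using that the constant block of $q$ goes from $R(-s-1)^b$ to $R(-s-1)^3$, whence $\rank_k(q \otimes k) \leq \min(b,3) \leq 1$). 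This places $\mu(I) \in \{6,7\}$, the sub-case of Theorem \ref{toralg}(b) that forces class $G(r)$.

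Next I analyze the induced pairing $T_2 \to \hom_k(T_1, T_3)$ to pin down $r$. By Theorem \ref{12.6} and Proposition \ref{btab1} together with the socle hypothesis, $T_1$ lives in degrees $s$ and $s+1$, $T_2$ in degrees $s+1$ and $s+2$, and $T_3$ has one-dimensional pieces in degrees $s+3$ and $2s+2$. Since $s \geq 3$, every nonzero product $T_1 \cdot T_2$ has total degree $\geq 2s+1 > s+3$, so it must land in $(T_3)_{2s+2}$. In class $G(r)$ the paired basis elements satisfy $e_i f_i = g_1 \in (T_3)_{2s+2}$, which forces $(\deg e_i, \deg f_i) \in \{(s, s+2),\ (s+1, s+1)\}$, giving
$$r \leq \min(\dim(T_1)_s, \dim(T_2)_{s+2}) + \min(\dim(T_1)_{s+1}, \dim(T_2)_{s+1}).$$

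A graded degree check on the mapping cone of Theorem \ref{12.6} shows that every differential entry outside the constant block of $q$ lies in $R_+$; hence the only cancellations in passing to the minimal resolution occur in degree $s+1$ from $q \otimes k$. This yields $\dim(T_1)_s = s$, $\dim(T_2)_{s+2} = s+4$, and, via Corollary \ref{numgenss}, $\dim(T_2)_{s+1} = \dim(T_1)_{s+1} - 3 = \mu(I) - s - 3$. Substituting gives $r \leq \mu(I) - 3$, and combined with $r \geq \mu(I) - 3$ from Theorem \ref{toralg}, this forces $r = \mu(I) - 3$ exactly. The bounds then follow: Proposition \ref{torbound} gives $\mu(I) \leq 2s+2$ so $r \leq 2s-1$, and the constant block of $q$ has rank at most $\min(b,3)$ so $\mu(I) \geq s+3$, whence $r \geq s$.

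I expect the main obstacle to be the precise identification $\dim(T_2)_{s+1} = \dim(T_1)_{s+1} - 3$, which requires careful bookkeeping of graded cancellations in the Theorem \ref{12.6} mapping cone. In particular, one must verify via a degree check on every differential (including the differentials out of $\bigwedge^{2n+1}V^*$, $\bigwedge^3 U$, $\bigwedge^2 U$, and the map $\proj\circ(\_)(\phi)$) that no non-minimality arises outside the degree-$s+1$ block of $q$.
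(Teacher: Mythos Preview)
Your proof is correct and in fact establishes more than the paper's proof of this corollary: you show $r=\mu(I)-3$ exactly, which is the content of the paper's subsequent Lemma~\ref{tormins}. The paper's own argument for Corollary~\ref{rbounds} is shorter and coarser: it uses only the two-sided bound $\mu(I)-3\le r\le \mu(I)-2$ from Theorem~\ref{toralg}, together with $s+3\le\mu(I)\le 2s+2$, which already gives $s\le r\le 2s$; the single remaining case $\mu(I)=2s+2$ is then dispatched by observing that such an $I$ has the Betti table of Proposition~\ref{maxideal}, so the degree argument of Proposition~\ref{propmax} forces $r\le 2s-1$. Your route avoids this case split by computing the graded Betti table from the mapping cone and bounding $\operatorname{rank}_k\delta$ degree-by-degree, which pins $r$ down precisely. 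One small point of phrasing: your sentence about ``paired basis elements $e_i,f_i$ having degrees in $\{(s,s+2),(s+1,s+1)\}$'' tacitly assumes the $G(r)$ bases are homogeneous, which need not hold; it is cleaner (and is what the paper does in Lemma~\ref{tormins}) to bound $\operatorname{rank}_k\delta$ directly, using that $\delta$ respects the internal grading and that $(T_3)_{2s+2}$ is one-dimensional. Your displayed inequality is exactly what this yields, so the logic is sound. Your handling of the $m=2$ case of Theorem~\ref{toralg} (showing $\operatorname{rank}_k(q\otimes k)\le\min(b,3)\le 1$, hence $\mu(I)\ge 6$) is a detail the paper leaves implicit.
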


\begin{proof}
Observe that $\mu(I) \geq s+3$. Moreover, $\mu (I) \leq 2s+2$ by Proposition \ref{torbound}. If $\mu (I) = 2s+2$, then $I$ has Betti table identical to that of the ideal in Proposition \ref{maxideal}, so that by the proof of Proposition \ref{propmax}, $I$ has Tor algebra class $G(2s-1)$. 
\end{proof}

\begin{lemma}\label{tormins}
Adopt Setup \ref{setup2} with $s \geq 3$. Then $I$ defines a compressed ring of Tor algebra class $G ( \mu(I)-3)$.
\end{lemma}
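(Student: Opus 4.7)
The plan is to pin down the precise value of $r$ in the class $G(r)$ assigned by Corollary \ref{rbounds}. By that corollary, $R/I$ is of class $G(r)$ for some $s \leq r \leq 2s-1$. Since Proposition \ref{trimmed} exhibits $I$ as a single-generator trimming of the Gorenstein ideal $I_2$, Theorem \ref{toralg} applies: when $s \geq 3$ the parameter $m = (s+b)/2$ is always at least $2$, and in the borderline case $m=2$ the lower bound $\mu(I) \geq s+3 \geq 6$ together with $\mu(I) \leq \mu(I_2)+2 = 7$ forces $\mu(I) \in \{6,7\}$, so we remain in the class-$G$ branch of the theorem. Consequently $r \in \{\mu(I)-2,\,\mu(I)-3\}$, and the lemma reduces to the upper bound $r \leq \mu(I)-3$.

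Set $T_i := \tor_i^R(R/I,k)$. The generating degrees of $I$ give $\dim(T_1)_s = s$ and $\dim(T_1)_{s+1} = \mu(I)-s$, while the socle decomposition gives $T_3 = (T_3)_{s+3} \oplus (T_3)_{2s+2}$ with each summand one-dimensional. In the mapping cone of Theorem \ref{12.6}, the slot $(F_2)_{s+2}$ receives $s+1$ generators from the $\phi$-duals inside $V^*$ and $3$ generators from $\bigwedge^2 U$, for total rank $s+4$. No cancellation affects this slot in the passage to the minimal resolution: on the $F_1$-side we have $(F_1)_{s+2}=0$ because $V'$ and $U$ contribute only in degrees $s$ and $s+1$, and on the $F_3$-side $d_3 \otimes k = 0$ since the component maps $\phi^{(n)}(-)$, $B$, and the Koszul differential $\bigwedge^3 U \to \bigwedge^2 U$ all have strictly positive degree. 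Hence $\dim(T_2)_{s+2} = s+4$, and the identity $\dim T_2 = \mu(I) + 1$ forces $\dim(T_2)_{s+1} = \mu(I)-s-3$.

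For $s \geq 3$, any nonzero product $T_1 \cdot T_2 \to T_3$ must land in $(T_3)_{2s+2}$ by degree, since the smallest achievable total degree is $2s+1 > s+3$. The only nontrivial graded pairings are $(T_1)_s \otimes (T_2)_{s+2} \to (T_3)_{2s+2}$ and $(T_1)_{s+1} \otimes (T_2)_{s+1} \to (T_3)_{2s+2}$. In the class-$G(r)$ basis, the $r$ Poincar\'e-duality pairs $(e_i,f_i)$ with $e_i \cdot f_i = g_1 \neq 0$ split as $r_1$ pairs of type $(T_1)_{s+1} \times (T_2)_{s+1}$ and $r_2$ pairs of type $(T_1)_s \times (T_2)_{s+2}$. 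The estimates $r_1 \leq \dim(T_2)_{s+1} = \mu(I)-s-3$ and $r_2 \leq \dim(T_1)_s = s$ combine to give $r = r_1+r_2 \leq \mu(I)-3$, and combining with the lower bound yields $r = \mu(I)-3$.

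The main obstacle is the rank computation $\dim(T_2)_{s+2} = s+4$, which rests on ruling out cancellations in the mapping cone of Theorem \ref{12.6} at precisely this graded slot; once this degree-counting step is in hand, the Tor-algebra multiplication is tightly constrained by degree, and the bound on $r$ follows by direct bookkeeping against the known generating degrees and socle structure.
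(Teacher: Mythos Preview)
Your proof is correct, and the overall architecture---establish $r\in\{\mu(I)-2,\mu(I)-3\}$ via Theorem~\ref{toralg}, compute the graded pieces of $T_2$, then bound $r$ from above by degree considerations on the pairing $T_1\times T_2\to T_3$---matches the paper's. The substantive difference lies in how the key input $\dim_k(T_2)_{s+2}=s+4$ (and hence $\dim_k(T_2)_{s+1}=\mu(I)-s-3$) is obtained. The paper extracts both numbers from rank counts along the graded strands of the long exact sequence of $\tor$ associated to $0\to I_2/I\to R/I\to R/I_2\to 0$, using the known Betti table of $R/I_2$ from Proposition~\ref{btab1}. You instead read $\dim_k(T_2)_{s+2}$ directly off the mapping cone of Theorem~\ref{12.6} by verifying that no cancellation touches that slot, and then recover $\dim_k(T_2)_{s+1}$ from the global identity $\dim_k T_2=\mu(I)+1$. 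Your route is more hands-on with the explicit resolution and avoids tracking the long exact sequence, at the cost of the degree-bookkeeping needed to rule out cancellation; the paper's route is more uniform and yields the full Betti table in one pass. One small remark: your splitting of the $r$ Poincar\'e pairs into homogeneous types $r_1+r_2$ tacitly uses that the $G(r)$ basis may be chosen homogeneous; this is harmless in the graded setting, but the paper sidesteps the issue by phrasing the bound directly as $\operatorname{rank}_k\delta\le s+(\mu(I)-s-3)$, which is equivalent and slightly cleaner.
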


\begin{proof}
Consider the degree $s+1$ strand of the long exact sequence of Tor associated to the short exact sequence
$$0 \to \frac{I_2}{I} \to \frac{R}{I} \to \frac{R}{I_2} \to 0.$$
We obtain:
\begin{align*}
    0 &\to \tor_2^R (R/I , k )_{s+1} \to \tor_2 (R/I_2 , k )_{s+1} \\
    &\to \tor_1 (I_2 / I , k)_{s+1} \to \tor_1^R (R/I , k)_{s+1} \\
    &\to \tor_1^R (R/I_2 , k)_{s+1} \to 0. \\
\end{align*}
Counting ranks,
\begin{align*}
    \dim_k \tor_2^R (R/I , k)_{s+1} &= b -3+ \mu(I) - s -b \\
    &= \mu(I) - s - 3. \\
\end{align*}
A similar, but easier, rank count on the degree $s+2$ strand yields 
$$\dim_k \tor_2^R (R/I , k)_{s+2} = s+4,$$
implying $R/I$ has Betti table
$$\begin{tabular}{L|L|L|L|L}
     & 0 & 1 & 2 & 3  \\
     \hline 
   0  & 1 & 0 & 0 & 0 \\
   \hline
   s-1 & 0 & s & \mu(I)-s-3 & 0 \\
   \hline 
   s & 0 & \mu(I) - s & s+4 & 1 \\
   \hline 
   2s-1 & 0 & 0 & 0 &1 \\
\end{tabular}$$
By Theorem \ref{toralg} combined with Proposition \ref{trimmed}, the ideal $I$ defines a ring of Tor algebra class $G(r)$ for $r \geq \mu(I)- 3$. We examine the induced map
$$\delta : T_2 \to \hom (T_1 , T_3),$$
where $T_i := \tor_i^R (R/I , R )$. Notice that the only nontrivial products can occur between $(T_1)_{s}$, $(T_2)_{s+2}$ and $(T_1)_{s+1}$, $(T_2)_{s+1}$, implying that
$$\rank_k \delta \leq s + ( \mu(I) - s - 3 ) = \mu(I) - 3$$
\end{proof}

A natural question arising from Corollary \ref{rbounds} is whether or not every possible $r$ value may be obtained for a given $s \geq 3$, where $I$ is obtained from Setup \ref{setup2}. The next proposition will allow us to answer in the affirmative:
\begin{prop}\label{torach}
Let $s \geq 3$ be an integer and $R =k[x,y,z]$, where $k$ is any field. 
\begin{enumerate}
    \item For $1\leq i < s/2$, the ideal 
$$I := (\textrm{Pf} (V_{s-i}^{s-2i}) \backslash \textrm{Pf}_{s-i+1} (V_{s-i}^{s-2i})) + R_+ \textrm{Pf}_{s-i+1} (V_{s-i}^{s-2i})$$
has Tor algebra class $G(2s-2i)$.
\item For $1 \leq i < s/2$, the ideal
$$I := (\textrm{Pf} (V_{s-i}^{s-2i}) \backslash \textrm{Pf}_{i+1} (V_{s-i}^{s-2i})) + R_+ \textrm{Pf}_{i+1} (V_{s-i}^{s-2i})$$
has Tor algebra class $G(2s-2i-1)$.
\end{enumerate} 
\end{prop}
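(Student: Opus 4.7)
The plan is to apply Theorem~\ref{12.6} to each ideal, identifying $\phi \in \bigwedge^2 V$ with the alternating matrix $V_{s-i}^{s-2i}$ and choosing $v_0 \in V$ to be the basis vector corresponding to the pfaffian that is being trimmed. The decomposition $\phi = \phi' + v_0 \wedge v_0'$ singles out an element $v_0' \in V'$, which, by skew-symmetry of $V_{s-i}^{s-2i}$, is just the row of $V_{s-i}^{s-2i}$ indexed by the trimmed pfaffian. Corollary~\ref{numgenss} reduces the computation of $\mu(I)$ to determining $\rank_k(q \otimes k)$, where $q : V^* \to U$ is the homomorphism of Observation~\ref{obs12.1}.(\ref{obs12.1.b}) satisfying $X \circ q = v_0'$. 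Once $\mu(I)$ is known, Lemma~\ref{tormins} immediately gives the Tor algebra class $G(\mu(I) - 3)$, provided one first verifies that $R/I$ is compressed in the sense of Setup~\ref{setup2}. This verification is carried out by computing the Hilbert function of $R/I$ via the short exact sequence $0 \to I_2/I \to R/I \to R/I_2 \to 0$, using that $R/I_2$ is a compressed grade-$3$ Gorenstein ring with socle $k(-2s+1)$ (Proposition~\ref{btab1}) and that $I_2/I \cong k\cdot \phi_{v_0}$ is a $1$-dimensional $k$-vector space in degree $s$.

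For part (1), $v_0 = e_{s-i+1}$ indexes the central pfaffian of $V_{s-i}^{s-2i}$, so $v_0'$ is the middle row $(-(O_{x^2})^T, 0, {}^{y^2}O)$. Every nonzero entry of this row is $-x^2$ or $y^2$, hence $v_0' \in R_{>1}\cdot V'$. Exactly as in the proof of Proposition~\ref{isminl}, the commutativity of the diagram in Observation~\ref{obs12.1}.(\ref{obs12.1.b}) then forces $q(V^*) \subseteq R_+ \cdot U$, so $\rank_k(q \otimes k) = 0$. Thus $\mu(I) = \mu(\textrm{Pf}(V_{s-i}^{s-2i})) + 2 = 2s - 2i + 3$, and Lemma~\ref{tormins} delivers Tor algebra class $G(2s - 2i)$.

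For part (2), $v_0 = e_{i+1}$ indexes a pfaffian lying in the top block of $V_{s-i}^{s-2i}$, so the $(i+1)$-st row of the matrix has its nonzero entries assembled from column $i+1$ of $U_{s-i}^{s-2i}$, together with an $x^2$ contribution from $O_{x^2}$ in the boundary case $i+1 = s-i$. The entries of this column of $U_{s-i}^{s-2i}$ live in rows $k \in \{s-2i-1, s-2i, s-2i+1\} \cap [1, s-i]$, and the definition of $U_m^j$ makes each such entry linear precisely when $k > m - j = i$ and quadratic otherwise. Bookkeeping these conditions, together with the freedom to adjust $q$ by elements of $\ker(X) = \im(x : \bigwedge^2 U \to U)$, shows that $q$ can be taken to have exactly one unit entry after reduction modulo $R_+$; that is, $\rank_k(q \otimes k) = 1$. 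Corollary~\ref{numgenss} then gives $\mu(I) = 2s - 2i + 2$, and Lemma~\ref{tormins} yields Tor algebra class $G(2s-2i-1)$.

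The main obstacle is the rank-one computation in part (2): whereas the trimmed row in part (1) sits automatically in $R_{>1}\cdot V'$ by design, the row indexed by $i+1$ can carry several degree-one entries, and one must track precisely how these interact with the Koszul relations $\ker(X)$ to confirm that only a single unit entry survives in $q \otimes k$. Once this bookkeeping is carried out, both parts of the proposition follow from the now-standard combination of Theorem~\ref{12.6}, Corollary~\ref{numgenss}, and Lemma~\ref{tormins}.
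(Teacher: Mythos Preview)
Your overall strategy matches the paper's exactly: identify $v_0'$ as the trimmed row of $V_{s-i}^{s-2i}$, compute $\rank_k(q\otimes k)$ from it, feed this into Corollary~\ref{numgenss} to get $\mu(I)$, and finish with Lemma~\ref{tormins}. For part~(1) your argument is essentially the paper's verbatim: the middle row carries only the entries $-x^2$ and $y^2$, so $q\otimes k=0$ and $\mu(I)=2s-2i+3$.

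Part~(2) has a genuine gap. The suggestion to ``adjust $q$ by elements of $\ker(X)$'' is a red herring: since $\ker(X)=\im\bigl(x:\bigwedge^2U\to U\bigr)\subset R_+U$, any two lifts of $v_0'$ through $X$ agree modulo $R_+U$, so $q\otimes k$ is independent of the choice of lift. Consequently $\rank_k(q\otimes k)$ is exactly the number of $k$-linearly independent degree-$1$ entries appearing in $v_0'$, and there is no freedom to reduce it further. But your own row analysis shows that the linear entries in column $i+1$ of $U_{s-i}^{s-2i}$ occur at rows $k\in\{s-2i-1,\,s-2i,\,s-2i+1\}$ with $k>i$, and the count of such $k$ varies with the position of $i$ relative to $s/3$; it is not visibly equal to~$1$ across the whole range $1\le i<s/2$. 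The ``bookkeeping'' you defer is therefore the entire content of the argument in this case, and deferring it leaves the rank-one claim unsupported. The paper instead writes down $v_0'$ explicitly as $x^2e_{\bullet}+z^2e_{\bullet}+y\,e_{\bullet}$ and reads off $\rank_k(q\otimes k)=1$ from the single linear coefficient; you need to carry out that explicit identification of the $(i+1)$-st row rather than gesture at it.
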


\begin{proof}
In view of Corollary \ref{numgenss} and Lemma \ref{tormins}, it suffices to compute the rank of the map $q : V^* \to U$ as in Theorem \ref{12.6} to find the minimal number of generators; consequently,
\begin{align*}
    \dim_k \tor_1^R (R/I , k)_{s+1} &= \mu (I) - s \\
    &= \mu (\textrm{Pf} (V_{s-i}^{s-2i}) ) + 2 - \rank_k (q \otimes k) - s  \\
    &= s+3-2i - \rank_k ( q \otimes k) \\
\end{align*}
The above equality follows from a rank count on the long exact sequence of $\tor$ associated to the short exact sequence
$$0 \to \frac{\textrm{Pf} (V_{s-i}^{s-2i})}{I} \to \frac{R}{I} \to \frac{R}{\textrm{Pf} (V_{s-i}^{s-2i})} \to 0$$
combined with the fact that $\dim_k \tor_1^R ( R/\textrm{Pf} (V_{s-i}^{s-2i}) , k)_{s} = s+1$. 

We compute the map $q$ explicitly in each case. Let $\phi \in \bigwedge^2 V$ represent the matrix $V_{s-i}^{s-2i}$; in the first case, after writing $V = V' \oplus Re_{s-i+1}$, we see (recalling that $i>0$)
$$\phi = \phi' + e_{s-i+1} \w ( -x^2 e_{s-i} + y^2 e_{s-i+2} ).$$
Let $U = Re_x \oplus Re_y \oplus Re_z$ with map $X : e_x \mapsto x$, $e_y \mapsto y$, and $e_z \mapsto z$. Take $q : V^* \to U$ to be the map sending $e_{s-i}^* \mapsto -x e_x$, $e_{s-i+2}^* \mapsto ye_y$, and all other basis vectors map to $0$. Clearly $q \otimes k = 0$, whence the resolution of Theorem \ref{12.6} is minimal. In particular, $\mu (I) = 2s+3-2i$.

For the second case, retain much of the notation as above. Decompose $V = V' \oplus Re_{i+1}$ and write 
$$\phi = \phi' + e_{i+1} \w (x^2 e_{2s-i} + z^2e_{2s-i+1} + ye_{2s-i+2} ).$$
Take $q : V^* \to U$ to be the map sending $e_{2s-i}^* \mapsto xe_x$, $e_{2s-i+1}^* \mapsto ze_z$, $e_{2s-i+2}^* \mapsto e_y$, and all other basis vectors to $0$. In this case $\rank_k (q \otimes k) = 1$, whence $\mu (I) = 2s+2-2i$.
\end{proof}

\begin{cor}\label{torach2}
Let $R = k[x,y,z]$ with the standard grading, where $k$ is any field. Given any $s \geq 3$ and any $r$ with $s \leq r \leq 2s-1$, there exists an ideal $I$ with $\soc (R/I) = k(-s) \oplus k(-2s+1)$ and defining an Artinian compressed ring of Tor algebra class $G(r)$. 
\end{cor}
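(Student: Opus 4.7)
The plan is to cover every integer $r$ in the range $s \leq r \leq 2s-1$ by exhibiting an explicit ideal realizing class $G(r)$, drawing on the constructions already developed: Proposition \ref{maxideal} (together with Proposition \ref{propmax}), Proposition \ref{torach}, and, for $s$ even, the generic trimming from Corollary \ref{evens} analyzed in the first proposition of Section \ref{toralgstr}.

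I would first dispose of the top value $r = 2s-1$ using the ideal of Proposition \ref{maxideal}: it has socle $k(-s)\oplus k(-2s+1)$ by that proposition, and Tor algebra class $G(2s-1)$ by Proposition \ref{propmax}. Next I would sweep the intermediate values with Proposition \ref{torach}: part (1) realizes $G(2s-2i)$ for $1\leq i < s/2$, and part (2) realizes $G(2s-2i-1)$ in the same range. As $i$ varies, part (1) produces every even value from $2s-2$ down to $s+2$ (when $s$ is even) or $s+1$ (when $s$ is odd), while part (2) produces every odd value from $2s-3$ down to $s+1$ (when $s$ is even) or $s$ (when $s$ is odd). Each such ideal fits the framework of Setup \ref{setup2}, so its socle is $k(-s)\oplus k(-2s+1)$ as required.

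Combining these three inputs already produces every $r \in \{s, s+1, \ldots, 2s-1\}$ when $s$ is odd. The one outstanding case is $r = s$ when $s$ is even: for that I would choose a Gorenstein ideal $I_2$ from the open set furnished by Proposition \ref{btab3}, form the trimming $I$ from Corollary \ref{evens}, and invoke the first proposition of Section \ref{toralgstr} to conclude that $R/I$ has Tor algebra class $G(s)$. The socle requirement is built into Setup \ref{setup2}.

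The argument is essentially bookkeeping once the earlier constructions are in hand. The only step demanding care is the parity split at the endpoint $r = s$, where the two families of Proposition \ref{torach} fall exactly one short when $s$ is even and must be supplemented by Corollary \ref{evens}; this is the only place where the construction is genuinely case-dependent.
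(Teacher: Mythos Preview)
Your proposal is correct and matches the paper's proof in substance: both use Proposition \ref{maxideal}/\ref{propmax} for $r=2s-1$, the two parts of Proposition \ref{torach} for the intermediate values, and the generic even-$s$ trimming (Corollary \ref{evens}/\ref{evens2} together with the unnamed proposition after Theorem \ref{toralg}) for the residual case $r=s$ with $s$ even. The paper organizes the casework by parity of $r$ rather than by which construction is invoked, but the ideals exhibited are identical.
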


\begin{proof}
Assume first that $r$ is even and $s \leq r < 2s-1$. In the case $r=s$, use Proposition \ref{evens2} on the ideal $( \textrm{Pf} (V_s^{ev} ) \backslash \textrm{Pf}_1 (V_s^{ev} )) + R_+  \textrm{Pf}_1 (V_s^{ev} )$. If $r>s$, employ Proposition \ref{torach} on the ideal
$$(\textrm{Pf} (V_{r/2}^{r-s}) \backslash \textrm{Pf}_{r/2+1} (V_{r/2}^{r-s})) + R_+ \textrm{Pf}_{r/2+1} (V_{r/2}^{r-s})$$

Assume now that $r$ is odd, with $s \leq  r \leq 2s-1$. If $r=2s-1$, use the ideal from Proposition \ref{propmax}. If $r<2s-1$, apply Proposition \ref{torach} to the ideal
$$(\textrm{Pf} (V_{(r+1)/2}^{r+1-s}) \backslash \textrm{Pf}_{s-(r+1)/2+1} (V_{(r+1)/2}^{r+1-s})) + R_+ \textrm{Pf}_{s-(r+1)/2+1} (V_{(r+1)/2}^{r+1-s})$$
\end{proof}

\section{Socle Minimally Generated in Degrees $s$, $2s-2$}\label{evencase}

In this section we further exploit properties of the resolution of Theorem \ref{12.6}.

\begin{setup}\label{setup3}
Let $k$ be a field and $V$ a $k$-vector space of dimension $3$; view $S(V)$ as graded by the standard grading. Let $I \subset R := S(V)$ be a grade $3$ homogeneous ideal defining a compressed ring with $\soc (R/I ) = k(-s) \oplus k(-2s+2)$, where $s \geq 3$. 

Write $I = I_1 \cap I_2$ for $I_1$, $I_2$ homogeneous grade $3$ Gorenstein ideals defining rings with socle degrees $s$ and $2s-2$, respectively. The notation $R_+$ will denote the irrelevant ideal ($R_{>0}$).
\end{setup}

It turns out that the ideals of Setup \ref{setup3} are also resolved by Theorem \ref{12.6}.

\begin{prop}\label{alscomp2}
Adopt Setup \ref{setup3}. Then the ideal $I_2$ defines a compressed ring.
\end{prop}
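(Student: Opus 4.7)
The plan is to mirror the proof of Proposition \ref{alscomp}. Let $\phi_1,\phi_2$ be minimal generators of the inverse system $M$ of $I$ with $\deg \phi_1 = s$ and $\deg \phi_2 = 2s-2$; since $M_{2s-2} = k\phi_2$ is one-dimensional, $\phi_2$ is uniquely determined up to scalar, and hence $I_2 = 0:_{S(V)} \phi_2$ is intrinsically attached to $I$. By Observation \ref{obs2} and Proposition \ref{proplol} it suffices to show that the induced map
$$\Phi_i^{(2)} \colon S_i(V) \to D_{2s-2-i}(V^*), \qquad f \mapsto f\cdot \phi_2$$
is surjective for every $i \ge \lceil (2s-2)/2 \rceil = s-1$, the tipping point of $I_2$.

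For $i \geq s$ the argument is a verbatim copy of the one in Proposition \ref{alscomp}: the component $D_{s-i}(V^*)$ vanishes, so the map $\Phi_i : S_i \to D_{s-i}(V^*) \oplus D_{2s-2-i}(V^*)$ associated with $I$ has maximal rank, and in this range $\dim S_i > \dim D_{2s-2-i}(V^*)$, so $\Phi_i$ is surjective. Composing with the projection onto the second summand yields $\Phi_i^{(2)}$ surjective.

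The genuinely new case is the tipping point $i = s-1$, where $\dim S_{s-1} = \dim D_{s-1}(V^*) = \binom{s+1}{2}$ and $\Phi_{s-1}$ is only injective (not surjective), so the projection trick breaks down. By Proposition \ref{proplol}(b), surjectivity of $\Phi_{s-1}^{(2)}$ amounts to the vanishing $(I_2)_{s-1} = 0$. Compressedness of $I$ immediately yields $(I_1)_{s-1} \cap (I_2)_{s-1} = (I)_{s-1} = 0$, and combining this with the Gorenstein symmetry $\dim(R/I_1)_{s-1} = \dim(R/I_1)_1 \leq 3$ of $R/I_1$ furnishes only the weaker bound $\dim(I_2)_{s-1} \leq 3$. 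The crux of the argument is closing this gap to zero: my plan is to exploit the identity $R_1\phi_1 + R_{s-1}\phi_2 = D_{s-1}$ (itself a consequence of compressedness of $I$ at degree $s-1$) together with the already-established surjectivity of $\Phi_s^{(2)}$ and the Gorenstein Betti table of Proposition \ref{btab1} applied to $I_2$, in order to force the containment $R_1\phi_1 \subseteq R_{s-1}\phi_2$ inside $D_{s-1}$, and hence $(I_2)_{s-1} = 0$.
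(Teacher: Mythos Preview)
You are right to isolate the degree $i=s-1$: the paper's one-line proof (``identical to the proof of Proposition~\ref{alscomp}'') glosses over exactly this case. In the setting of Proposition~\ref{alscomp} the socle degree of $I_2$ is $2s-1$, the tipping point is $s$, and surjectivity of $\Phi_i^{(2)}$ for $i\ge s$ together with Proposition~\ref{proplol}(b) covers every degree. Here the socle degree is $2s-2$, the tipping point drops to $s-1$, and the projection trick fails at $i=s-1$ since $\Phi_{s-1}\colon S_{s-1}\to D_1\oplus D_{s-1}$ is merely injective. Your bound $\dim_k(I_2)_{s-1}\le 3$ is correct and is as far as compressedness of $I$ alone will take you.

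Unfortunately your plan to close the remaining gap cannot succeed, because the assertion is false as stated. Take $s=3$ and, in divided-power coordinates $X,Y,Z$ dual to $x,y,z$, set
\[
\phi_2 = X^{(2)}Y^{(2)}+Y^{(2)}Z^{(2)}\in D_4,\qquad \phi_1 = X^{(2)}Z+XZ^{(2)}\in D_3.
\]
Then $\phi_1\notin R_1\phi_2$ (every element of $R_1\phi_2$ carries a $Y$), so $\soc(R/I)=k(-3)\oplus k(-4)$; a direct check gives $R_1\phi_1+R_2\phi_2=D_2$ and $\dim_k M_3=4$, so $R/I$ has the compressed Hilbert function $(1,3,6,4,1)$. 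But $xz\cdot\phi_2=(x^2-z^2)\cdot\phi_2=0$, whence $(I_2)_2\neq 0$ and $R/I_2$ is not compressed. In particular the containment $R_1\phi_1\subseteq R_{s-1}\phi_2$ that you hope to force fails: $x\phi_1=XZ+Z^{(2)}$ does not lie in $R_2\phi_2=\langle Y^{(2)},\,X^{(2)}+Z^{(2)},\,XY,\,YZ\rangle$. (Your appeal to Proposition~\ref{btab1} is also misplaced---that result treats socle degree $2s-1$, and any analogous Betti-table input for $I_2$ would presuppose the very compressedness you are trying to establish.) The $i=s-1$ case is therefore a genuine obstruction that neither your sketch nor the paper's ``identical'' proof overcomes.
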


\begin{proof}
This is identical to the proof of Proposition \ref{alscomp}.
\end{proof}

\begin{lemma}\label{evenbtab}
Adopt Setup \ref{setup3}. Then $R/I_2$ has Betti table
$$\begin{tabular}{L|L|L|L|L}
     & 0 & 1 & 2 & 3  \\
     \hline 
   0  & 1 & 0 & 0 & 0 \\
   \hline
   s-1 & 0 & 2s+1 & 2s+1 & 0 \\
   \hline 
   s & 0 & 0 & 0 & 0 \\
   \hline 
   2s-2 & 0 & 0 & 0 &1 \\
\end{tabular}$$
\end{lemma}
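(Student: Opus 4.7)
The approach is to parallel the proof of Proposition~\ref{btab1}: apply Proposition~\ref{equalities} to the compressed Gorenstein algebra $R/I_2$ (which is compressed by Proposition~\ref{alscomp2}), once the initial degree $t$ of $I_2$ has been identified. The rest of the Betti diagram is then forced by Gorenstein self-duality.

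The one delicate step is pinning down $t$. By Proposition~\ref{proplol}, the tipping point of $I_2$ is $\lceil (2s-2)/2 \rceil = s-1$. Unlike the odd-socle case of Proposition~\ref{btab1}, here the source and target of $\Phi_{s-1}$ are equidimensional, both of $k$-dimension $\binom{s+1}{2}$, so surjectivity at the tipping point forces $\Phi_{s-1}$ to be an isomorphism. Hence $(I_2)_{s-1} = \ker \Phi_{s-1} = 0$, and since $\binom{s+2}{2} > \binom{s}{2}$ makes $\Phi_{s}$ have nontrivial kernel, the initial degree is $t = s$.

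With $t = s$, $r = 3$, $c = 2s-2$, and $m = 1$ substituted into Proposition~\ref{equalities}, the case $i = 1$ yields
$$\dim_k \tor_1^R(R/I_2, k)_s = \binom{s+2}{2} - \binom{s}{2} = 2s+1.$$
Gorenstein self-duality $\beta_{i,j} = \beta_{3-i,\, 2s+1-j}$ then forces $\beta_{2,s+1} = \beta_{1,s} = 2s+1$ and $\beta_{3,2s+1} = \beta_{0,0} = 1$. Applying Proposition~\ref{equalities} with $i = 2$ gives
$$\dim_k \tor_2^R(R/I_2, k)_{s+1} - \dim_k \tor_1^R(R/I_2, k)_{s+1} = 2s+1,$$
so combined with $\beta_{2,s+1} = 2s+1$ we conclude $\beta_{1,s+1} = 0$. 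A final appeal to self-duality (together with the elementary fact that no minimal second syzygy has internal degree below $t+1$) shows that every remaining $\beta_{i,j}$ vanishes, yielding exactly the claimed Betti diagram.

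The main obstacle is the identification $t = s$ rather than $s-1$: the tipping point and initial degree coincide in the odd-socle case of Proposition~\ref{btab1}, but here $\dim_k S_{s-1} = \dim_k D_{s-1}$ forces $\Phi_{s-1}$ to be a bijection, so no ideal elements survive at the tipping point. Once this off-by-one is handled correctly, the remainder of the argument is a mechanical substitution into Proposition~\ref{equalities} combined with Gorenstein symmetry.
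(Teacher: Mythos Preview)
Your proof is correct and follows essentially the same route as the paper: apply Proposition~\ref{equalities} with $r=3$, $c=2s-2$, $m=1$, $t=s$, and finish with Gorenstein self-duality. In fact, your treatment of the initial degree is more careful than the paper's, which asserts $t=s=\lceil(2s-2)/2\rceil$; as you correctly note, the tipping point is $s-1$, and one must argue that $\Phi_{s-1}$ is bijective (equidimensional source and target) before concluding $t=s$.
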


\begin{proof}
We employ Proposition \ref{equalities}, where $r=3$, $c = 2s-2$, $m=1$, and $t = s$ ($= \lceil (2s-2)/2 \rceil$; see Proposition \ref{proplol}). Using the notation
$$T_i := \tor_i^R (R/I_2,k),$$
we obtain
\begin{equation*}
    \begin{split}
       & \dim (T_1)_{s} = 2s+1 \\
       &\dim (T_2)_{s+1} - \dim (T_1)_{s+1} = 2s+1 \\
       &\dim (T_2)_{s+2} = 0 \\
    \end{split}
\end{equation*}
Observe that we must have $\dim_k (T_1)_{s+1} = 0$, since otherwise the resolution of $R/I_2$ would not be self-dual, contradicting the fact that $I_2$ is Gorenstein. This yields the result.
\end{proof}

\begin{prop}\label{genset}
Adopt Setup \ref{setup3}. There exists a minimal generating set $(\phi_1 , \dots , \phi_{2s+1})$ for $I_2$ such that
$$I = (\phi_1 , \dots , \phi_{2s} ) + R_+ \phi_{2s+1}$$
\end{prop}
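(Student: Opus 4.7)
The proof should proceed in close parallel to Proposition \ref{trimmed}, the main simplification being that by Lemma \ref{evenbtab} the ideal $I_2$ is generated entirely in a single degree $s$. The plan is first to count $\dim_k(I_1 \cap I_2)_s$ via the compressed hypothesis, then to select generators, and finally to verify degree by degree that the trimmed ideal coincides with $I$.

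First I would compute $\dim_k I_s$ directly from Definition \ref{compdef}. With embedding dimension $3$, top socle degree $c = 2s-2$, and socle polynomial $z^s + z^{2s-2}$, one finds $\dim_k (R/I)_s = 1 + \binom{s}{2}$, hence $\dim_k I_s = 2s$. On the other hand, Proposition \ref{alscomp2} together with Lemma \ref{evenbtab} shows $\dim_k(I_2)_s = 2s+1$ and $I_2$ has no minimal generators in any other degree. Choose a basis $\phi_1, \ldots, \phi_{2s}$ for $I_s = (I_1 \cap I_2)_s$ and extend it to a basis $\phi_1, \ldots, \phi_{2s+1}$ of $(I_2)_s$; since $I_2$ is generated in degree $s$, this is a minimal generating set for $I_2$.

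Set $J := (\phi_1, \ldots, \phi_{2s}) + R_+\phi_{2s+1}$. The containment $J \subseteq I$ is the easy direction: each $\phi_i$ with $i \leq 2s$ lies in $I$ by construction, while $R_+\phi_{2s+1} \subseteq I_2$ because $\phi_{2s+1} \in I_2$, and $R_+\phi_{2s+1} \subseteq R_{\geq s+1} \subseteq I_1$ because $R/I_1$ is Gorenstein of socle degree $s$, forcing $(I_1)_{k} = R_k$ for $k \geq s+1$. Thus $R_+\phi_{2s+1} \subseteq I_1 \cap I_2 = I$.

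For the reverse inclusion $I \subseteq J$, I would argue in each degree. In degree $s$, $I_s = \Span_k\{\phi_1,\ldots,\phi_{2s}\} \subseteq J_s$ by choice of basis. In degree $k \geq s+1$, the fact that $(I_1)_k = R_k$ yields $I_k = (I_2)_k$, and since $I_2$ is generated in degree $s$ we can write
\[
    (I_2)_k = R_{k-s}\cdot\Span_k\{\phi_1,\ldots,\phi_{2s+1}\} \subseteq (\phi_1,\ldots,\phi_{2s})_k + R_{k-s}\phi_{2s+1} \subseteq J_k,
\]
where the last containment uses $k-s \geq 1$. Combining the two directions gives $I = J$. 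I do not expect any serious obstacle here; the only subtle point is verifying that $(I_1)_{\geq s+1} = R_{\geq s+1}$, which is an immediate consequence of the socle degree of $R/I_1$ being $s$, and that $I_2$ is generated in the single degree $s$, which is precisely the content of Lemma \ref{evenbtab}.
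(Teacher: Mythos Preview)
Your proof is correct and follows essentially the same approach as the paper's: compute $\dim_k I_s = 2s$ from the compressed hypothesis, extend a basis of $I_s$ to a basis of $(I_2)_s$ using Proposition~\ref{alscomp2} and Lemma~\ref{evenbtab}, and then identify $I$ with the trimmed ideal. The only difference is cosmetic: the paper concludes by invoking the general fact that a compressed algebra has minimal generators concentrated in two consecutive degrees, whereas you verify the equality $I = J$ by an explicit degree-by-degree containment check using $(I_1)_{\geq s+1} = R_{\geq s+1}$ and the single-degree generation of $I_2$; your version is slightly more self-contained but not a genuinely different route.
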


\begin{proof}
By the definition of a compressed ring,
\begingroup\allowdisplaybreaks
\begin{align*}
    \dim I_s &=\dim R_s - \min \{ \dim_k R_s , \dim_k R_0 + \dim_k R_{s-2} \} \\
    &= (s+2)(s+1)/2 - 1 - s(s-1)/2 \\
    &= 2s \\
\end{align*}
\endgroup
Choose a basis $\{ \phi_1 , \dots , \phi_{2s} \}$ for $(I_1 \cap I_2)_s$. By Proposition \ref{alscomp2}, $I_2$ defines a compressed ring, whence $\dim_k (I_2)_s = 2s+1$ by Lemma \ref{evenbtab}. Extend $\{\phi_1 , \dots , \phi_{2s} \}$ to a basis $\{ \phi_1 , \dots , \phi_{2s+1} \}$ of $(I_2)_s$. 

Observe that $(I_1 \cap I_2)_{s+1} = (I_2)_{s+1}$ by a dimension count. Since $I$ defines a compressed ring, the degrees of its minimal generators are concentrated in $2$ consecutive degrees. This means that
$$I = (\phi_1 , \dots , \phi_{2s}) + R_+ \phi_{2s+1}.$$
\end{proof}

\begin{prop}\label{numgens2}
Adopt Setup \ref{setup3}. Then $\mu (I) = 2s$ or $2s+1$. 
\end{prop}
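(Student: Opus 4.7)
The plan is to invoke the resolution of Theorem \ref{12.6} together with Corollary \ref{numgenss} to reduce the statement to a rank computation on the auxiliary map $q$, and then to rule out low-rank $q$ via a Pfaffian/Artinianness argument. By Proposition \ref{genset} we may write $I=(\phi_1,\dots,\phi_{2s})+R_+\phi_{2s+1}$ for some minimal generating set $\phi_1,\dots,\phi_{2s+1}$ of $I_2$. Translating as in Remark \ref{translate}---choosing $v_0$ to be the direct summand corresponding to $\phi_{2s+1}$, $x\colon U\to R$ to be the first Koszul differential of $R_+$, and $\phi\in\bigwedge^2 V$ to be the alternating element giving the Pfaffian presentation of the grade $3$ Gorenstein ideal $I_2$---hypotheses \ref{SU12}.\ref{SU12.a}--\ref{SU12}.\ref{SU12.d} are verified exactly as in the odd-socle case. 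Combining Corollary \ref{numgenss} with $\mu(I_2)=2s+1$ from Lemma \ref{evenbtab} gives
$$\mu(I)=\mu(I_2)+2-\rank_k(q\otimes k)=2s+3-\rank_k(q\otimes k).$$

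Since $U$ is free of rank $3$, we have $\rank_k(q\otimes k)\leq 3$, which yields the lower bound $\mu(I)\geq 2s$. It therefore suffices to prove the upper bound $\mu(I)\leq 2s+1$, equivalently $\rank_k(q\otimes k)\geq 2$. My approach is to unpack the definition of $q$ in degree $0$: by Lemma \ref{evenbtab} the alternating matrix $M$ presenting $I_2$ has all entries in $R_1$, so the element $v_0'\in V'$ of Setup \ref{SU12} is a vector of $2s$ linear forms $(v_0')_j=a_jx+b_jy+c_jz$. Since $x\circ q=v_0'$ and the target of $x$ is generated in degree $1$, a degree count (as in the proof of Proposition \ref{isminl}) shows that $q$ may be represented by the scalar matrix with rows $(a_j,b_j,c_j)$, whence $\rank_k(q\otimes k)$ equals the dimension of the span of these coefficient vectors in $k^3$.

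The main obstacle is the following claim, which I would prove by contradiction: the coefficient vectors $(a_j,b_j,c_j)$, $j=1,\dots,2s$, cannot all lie in a common $1$-dimensional subspace. If they did, then there would be a nonzero linear form $\ell\in R_1$ and scalars $\lambda_j$ with $(v_0')_j=\lambda_j\ell$; in matrix language the row of $M$ indexed by $v_0$ would have every entry in the principal ideal $(\ell)$. Every submaximal Pfaffian $\mathrm{Pf}_i(M)$ for $i\ne 2s+1$ is the Pfaffian of a $2s\times 2s$ alternating submatrix of $M$ that still contains this row, and each term of the Pfaffian expansion uses exactly one entry from every row, so each such $\mathrm{Pf}_i(M)$ lies in $(\ell)$. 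Consequently $I_2+(\ell)=(\mathrm{Pf}_{2s+1}(M))+(\ell)$, so the quotient $R/(I_2+(\ell))$ is a quotient of $k[x,y,z]/(\ell,\mathrm{Pf}_{2s+1}(M))\cong k[u,v]/(\overline{\mathrm{Pf}_{2s+1}(M)})$, a ring of Krull dimension at least $1$. This contradicts the fact that $R/I_2$ is Artinian (since $I_2$ has grade $3=\dim R$) and therefore so is $R/(I_2+(\ell))$. Hence $\rank_k(q\otimes k)\geq 2$ and $\mu(I)\leq 2s+1$, completing the proof.
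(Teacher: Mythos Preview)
Your argument is correct and follows the same overall strategy as the paper: reduce to computing $\rank_k(q\otimes k)$ via Corollary~\ref{numgenss}, observe that the linear presentation matrix $M$ forces $q$ to be a scalar matrix, and then rule out $\rank_k(q\otimes k)\leq 1$. The difference lies only in how the final contradiction is obtained. The paper treats rank~$0$ and rank~$1$ separately; for rank~$1$ it performs a change of basis so that the $v_0$-row of $M$ has a single nonzero entry $x$, and then reads off from the Buchsbaum--Eisenbud syzygy $M\cdot(\mathrm{Pf}_j(M))_j=0$ a relation $x\cdot\mathrm{Pf}_j(M)=0$, contradicting that $R$ is a domain. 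You instead argue uniformly that if the $v_0$-row lies in $(\ell)$ then every $\mathrm{Pf}_i(M)$ with $i\neq 2s+1$ lies in $(\ell)$, so $I_2+(\ell)$ is generated by $\ell$ and one Pfaffian, contradicting Artinianness of $R/I_2$. Your route is slightly more self-contained (no appeal to the Pfaffian syzygy and no preliminary basis change), while the paper's route is marginally sharper in that it pinpoints a single zero-divisor relation; either way the argument is two lines once the rank reduction is in place.
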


\begin{proof}
In view of Corollary \ref{numgenss} and Proposition \ref{genset}, it suffices to compute the rank of the map $q \otimes k$ as in Theorem \ref{12.6}. A priori, $\rank_k (q \otimes k) \leq 3$; assume first that $\rank_k ( q \otimes k ) = 0$. 

Assume $I_2$ arises from the pfaffians of some skew symmetric matrix $M$. Observe that $M$ has linear entries by Lemma \ref{evenbtab}. Counting degrees, one notices that $q$ must have degree $0$ entries. Therefore $q = 0$ identically if $q \otimes k =0$, implying that $M$ must have an entire row of zeroes. This is impossible, so $\rank_k (q \otimes k) \geq 1$. 

Assume instead that $\rank_k (q \otimes k ) = 1$. Without loss of generality, we may assume that $q : V^* \to Re_x \oplus Re_y \oplus Re_z$ is the map sending $e_{2s+1}^* \mapsto e_x$ and all other basis vectors to $0$. This means that $M$ has a row consisting of a single nonzero linear entry, $x$. But the resolution of $I_2$ implies that there is a relation of the form $x \cdot \phi = 0$, contradicting the fact that $R$ is a domain. Thus $\rank_k (q \otimes k) \geq 2$. 
\end{proof}

\begin{cor}\label{torboundev}
Adopt Setup \ref{setup3}. Then $I$ defines a compressed ring of Tor algebra class $G(r)$ for some $2s-3 \leq r \leq 2s-1$. 
\end{cor}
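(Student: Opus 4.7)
The plan is to combine Theorem \ref{toralg} with the generator count of Proposition \ref{numgens2}, with no serious obstacle in between.

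By Proposition \ref{genset}, the ideal $I$ arises as a trimming
$$I = (\phi_1, \dots, \phi_{2s}) + R_+ \phi_{2s+1}$$
of the grade 3 homogeneous Gorenstein ideal $I_2$, which satisfies $\mu(I_2) = 2s+1$ by Lemma \ref{evenbtab}. Setting $2m+1 = 2s+1$ forces $m = s$; since $s \geq 3$, we land in case (c) of Theorem \ref{toralg}. Before invoking it, I would spell out the mild hypotheses: $I_2$ is grade 3 homogeneous Gorenstein and $R_+$-primary by Setup \ref{setup3} together with Proposition \ref{alscomp2}, and $I_2 \subseteq R_+^s \subseteq R_+^2$ because every minimal generator of $I_2$ has degree $s \geq 3$. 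Theorem \ref{toralg}(c) then yields that $R/I$ has Tor algebra class $G(r)$ with
$$\mu(I) - 3 \leq r \leq \mu(I) - 2.$$

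Now I substitute the two possible values of $\mu(I)$. Proposition \ref{numgens2} pins $\mu(I)$ to $\{2s, 2s+1\}$, so
$$2s-3 \;\leq\; \mu(I) - 3 \;\leq\; r \;\leq\; \mu(I) - 2 \;\leq\; 2s-1,$$
which is exactly the claimed range. That $R/I$ is compressed is part of the Setup \ref{setup3} hypothesis, so nothing further is required for that half of the statement.

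The only step worth double-checking is the applicability of Theorem \ref{toralg}; once $I_2$ is confirmed to be an $R_+^2$-contained $R_+$-primary Gorenstein ideal with $2s+1 \geq 7$ minimal generators, the conclusion is immediate. A more refined analysis—pinning down $r$ precisely in terms of $\mu(I)$ by inspecting which graded strands of $\tor^R_\bullet(R/I,k)$ can carry nonzero products into the top Tor module, in the spirit of Lemma \ref{tormins}—would sharpen the result, but it is not needed to establish the range $2s-3 \leq r \leq 2s-1$ claimed here, and I would leave such a sharpening to the open question mentioned in the introduction of Section \ref{evencase}.
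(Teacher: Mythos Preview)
Your argument is correct. You invoke exactly the same ingredients as the paper---Proposition \ref{genset}, Theorem \ref{toralg}(c), and Proposition \ref{numgens2}---but you extract more from Theorem \ref{toralg}(c) than the paper does: that theorem already gives the two-sided bound $\mu(I)-3 \leq r \leq \mu(I)-2$, so combining it with $\mu(I)\in\{2s,2s+1\}$ immediately yields $2s-3\leq r\leq 2s-1$. The paper, by contrast, uses Theorem \ref{toralg} only for the lower bound and then re-establishes the upper bound by hand: it computes $\dim_k \tor_2^R(R/I,k)_{s+1}$ via the long exact sequence attached to $0\to I_2/I\to R/I\to R/I_2\to 0$ and checks, degree by degree, that the only possible nonzero products in the Tor algebra lie in $(T_1)_s\cdot(T_2)_{s+1}$, forcing $\rank_k\delta\leq 2s-2$ or $2s-1$. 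For the bare statement of the corollary your shortcut is cleaner; the paper's longer route, however, yields the finer data $\dim_k\tor_2^R(R/I,k)_{s+1}=2s-2$ or $2s-1$ and isolates exactly which graded pieces can contribute to $\delta$, information that feeds directly into the realizability question posed immediately after.
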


\begin{proof}
By Theorem \ref{toralg} combined with Proposition \ref{numgens2}, $I$ has Tor algebra class $G(r)$ for $r \geq 2s-3$ or $2s-2$. Observe that $\dim_k \tor_1(R/I , k)_{s+1} = 0$ or $1$ if $\mu(I) = 2s$ or $2s+1$, respectively. Counting ranks on the homogeneous strands of the long exact sequence of Tor associated to the short exact sequence
$$0 \to I_2 / I \to R/I \to R/I_2 \to 0$$
we deduce that $\dim_k \tor_2^R (R/I , k)_{s+1} = 2s-2$ or $2s-1$ if $\mu(I) = 2s$ or $2s+1$, respectively. In a similar manner to Proposition \ref{torach}, we examine the induced map
$$\delta : T_2 \to \hom (T_1 , T_3),$$
where $T_i := \tor_i^R (R/I , R )$. Observe that
$$(T_1) (T_2)_{s+2} \subset (T_3)_{\geq 2s+2} = 0$$
and
$$(T_1)_{s+1} (T_2)_{s+1} \subset (T_2)_{2s+2} = 0$$
whence the only nontrivial products can occur between $(T_1)_{s}$ and $(T_2)_{s+1}$. This means
$$\rank_k \delta \leq 2s-2 \ \textrm{or} \ 2s-1.$$
\end{proof}

\begin{question}
Let $R= k[x,y,z]$. Does there exist a homogeneous ideal defining a compressed ring with $\soc (R/I) = k(-s) \oplus k(-2s+2)$ such that either
\begin{enumerate}[(a)]
    \item $\mu(I) = 2s$ and $R/I$ has Tor algebra class $G(2s-2)$, or
    \item $\mu(I) = 2s+1$ and $R/I$ has Tor algebra class $G(2s-1)$?
\end{enumerate}
\end{question}

As Corollary \ref{torboundev} suggests, the numerology alone does not forbid ideals of the above form to exist.

\section*{Acknowledgements}

Thanks to Andy Kustin for helpful comments on various drafts of this paper.

\end{document}